\tikzset{every picture/.style={line width=0.75pt}}
\theoremstyle{plain}
\newtheorem{thm}{Theorem}[section]
\newtheorem{claim}[thm]{Claim}
\newtheorem{corollary}[thm]{Corollary}
\newtheorem{lemma}[thm]{Lemma}
\newtheorem{proposition}[thm]{Proposition}
\newtheorem{question}[thm]{Question}
\newtheorem{theorem}[thm]{Theorem}
\theoremstyle{definition}
\newtheorem{definition}[thm]{Definition}
\newtheorem{notation}[thm]{Notation}
\newtheorem*{notation*}{Notation and Terminology}
\newtheorem{remark}[thm]{Remark}
\newtheorem{example}[thm]{Example}
\theoremstyle{remark}
\DeclareMathOperator{\Bl}{Bl}
\DeclareMathOperator{\coeff}{coeff}
\DeclareMathOperator{\Exc}{Exc}
\DeclareMathOperator{\length}{length}
\DeclareMathOperator{\N}{N}
\DeclareMathOperator{\NE}{NE}
\DeclareMathOperator{\Nef}{Nef}
\DeclareMathOperator{\NS}{NS}
\DeclareMathOperator{\PE}{PE}
\DeclareMathOperator{\rank}{rank}
\DeclareMathOperator{\Sym}{Sym}
\DeclareMathOperator{\tp}{top}
\newcommand{\Rmnum}[1]{\expandafter\@slowromancap\romannumeral #1@}
\title[Tangent bundles]
{Smooth projective surfaces with pseudo-effective tangent bundles}
\author{Jia Jia}
\address{Department of Mathematics,
	National University of Singapore,
	Singapore 119076, Republic of Singapore
}
\email{jia{\_}jia@u.nus.edu}
\author{Yongnam Lee}
\address{Center for Complex Geometry,
	Institute for Basic Science (IBS),
	55 Expo-ro, Yuseong-gu, Daejeon, 34126, Republic of Korea, and
	\newline
	\indent Department of Mathematical Sciences, KAIST, 291 Daehak-ro, Yuseong-gu, Deajeon, 34141, Republic of Korea
}
\email{ynlee@kaist.ac.kr, ynlee@ibs.re.kr}
\author{Guolei Zhong}
\address{Center for Complex Geometry,
	Institute for Basic Science (IBS),
	55 Expo-ro, Yuseong-gu, Daejeon, 34126, Republic of Korea
}
\email{zhongguolei@u.nus.edu, guolei@ibs.re.kr}
\subjclass[2020]{
	14E30, 
    14J26, 
    14J27, 
    14J60. 
}
\keywords{tangent bundle, pseudo-effective, projective surface, elementary transformation}
\begin{document}

\begin{abstract}
	Let \(S\) be a non-uniruled (i.e., non-birationally ruled) smooth projective surface.
	We show that the tangent bundle \(T_S\) is pseudo-effective if and only if
	the canonical divisor \(K_S\) is nef and the second Chern class vanishes,
	i.e., \(c_2(S)=0\).
	Moreover, we study the blow-up of a non-rational ruled surface with pseudo-effective tangent bundle.
\end{abstract}

\maketitle

\section{Introduction}

We work over the field \(\mathbb{C}\) of complex numbers.
A smooth projective variety comes naturally equipped with the tangent bundle,
which is the dual of its sheaf of K\"ahler differentials,
and the related properties of such objects can be applied
to classify algebraic varieties.
A well-known theorem of Mori \cite{Mor79} asserts that
if the tangent bundle of a smooth projective variety is ample,
then it is a projective space,
which gives a solution to Hartshorne's conjecture.
Since then, the study of smooth projective varieties
whose tangent bundles admit some positivity properties
has attracted a lot of attention
and such properties are usually expected to
impose strong restrictions on the geometry of the underlying varieties.

\begin{definition}
	Let \(X\) be a smooth projective variety.
	Given a vector bundle \(E\) on  \(X\),
	we denote by \(\mathbb{P}(E)\) the Grothendieck projectivisation of \(E\)
	with \(\mathcal{O}_{\mathbb{P}(E)}(1)\) denoting the relative hyperplane section bundle.
	Recall that \(E\) is ample (resp.~nef, big, pseudo-effective) if
	\(\mathcal{O}_{\mathbb{P}(E)}(1)\) is ample (resp.~nef, big, pseudo-effective) on \(\mathbb{P}(E)\).
\end{definition}

Following the program of Campana and Peternell, a smooth Fano variety with nef tangent bundle
is conjectured to be a rational homogeneous space,
and this conjecture has been intensively studied (cf.~\cite{CP91,DPS94,MOS15,Kan17}).
Starting from this aspect, it is natural to classify smooth projective varieties
with other positivity properties,
e.g., with big or pseudo-effective tangent bundles.
In the past few years,
there are many beautiful results in this direction, especially
when \(X\) is a Fano manifold, i.e., the anti-canonical divisor \(-K_X\) is ample.
For example, H\"oring, Liu and Shao showed in \cite[Theorem~1.2]{HLS22} that
the tangent bundle of a smooth del Pezzo surface (i.e., a Fano surface) of degree \(d\coloneqq K_X^2\) is big (resp. pseudo-effective)
if and only if \(d\geq 5\) (resp.~\(d\geq 4\)).
We refer readers to \cite{Hsi15, Sha20, Mal21, FL22, HL21,   HLS22,  KKL22,  Kim22, Liu22} and the references therein
for more information involving projective manifolds with big tangent bundles.

As smooth projective varieties with big tangent bundles are known to be uniruled
(cf.~\cite[Corollary~8.6]{Miy87-b} and \cite[Proposition~7.1]{Mal21}),
one may ask if there exist many non-uniruled projective varieties sitting in the ``boundary'',
i.e., admitting a pseudo-effective but non-big tangent bundle.
Apart from the trivial example of abelian varieties,
a product of an abelian variety and any smooth projective variety
becomes another example coming to our mind
(cf.~\cref{lem:pe_inj}).
To the best knowledge of ourselves, up to a finite \'etale cover,
there seems no more other example which has been explored before.
Therefore, we propose the following question to study
whether such varieties of product type are basically the only possibilities.

\begin{question}\label{conj:main}
	Let \(X\) be a non-uniruled smooth projective variety.
	Are the following assertions equivalent?
	\begin{enumerate}[wide=0pt,leftmargin=*]
		\item The tangent bundle \(T_X\) is pseudo-effective;
		\item The top Chern class \(c_{\tp}(X)\) vanishes, and the augmented irregularity \(q^\circ(X)\) does not vanish.
	\end{enumerate}
\end{question}

Here, the \emph{augmented irregularity} \(q^\circ(X)\) of a smooth projective variety \(X\)
is defined to be the supremum of \(q(X')\coloneqq h^1(X',\mathcal{O}_{X'})\)
where \(X'\to X\) runs over all the finite \'etale covers of \(X\) (cf.~\cite[Definition~4.1]{NZ09};
see \cref{rmk-cy-fail}).
Note that the top Chern class of a product variety \(X=Y\times Z\) satisfies
\(c_{\tp}(X)=c_{\tp}(Y)\times c_{\tp}(Z)\).
Hence, by a theorem of Lieberman \cite{Lie78},
if a non-uniruled smooth projective variety admits a global holomorphic vector field,
then up to a finite \'etale cover,
it will split into a product of an abelian variety
and a projective variety admitting no global holomorphic vector field,
in which case, the implication \((1)\Rightarrow (2)\) in \cref{conj:main} follows.
However, in general, the pseudo-effectiveness does not necessarily imply
the existence of any global sections of the tangent bundle;
indeed, it is even not clear to us about the non-vanishing
\(H^0(X,\Sym^mT_X)\neq 0\) for some \(m\)-th symmetric power.

We also note that, different from the ampleness and the nefness of a line bundle,
there is a lack of numerical characterisations on the bigness or pseudo-effectiveness,
which makes our investigation a bit more difficult (cf.~\cref{rmk-comparison}).

The main result of our paper is to give a positive answer to \cref{conj:main}
in dimension \(2\):

\begin{theorem}\label{mainthm:pe_iitaka_fib}
	Let \(S\) be a non-uniruled (i.e., non-birationally ruled) smooth projective surface.
	Then the following assertions are equivalent.
	\begin{enumerate}[wide=0pt,leftmargin=*]
		\item The tangent bundle \(T_S\) is pseudo-effective;
		\item \(S\) is minimal and the second Chern class vanishes, i.e., \(c_2(S)=0\).
	\end{enumerate}
	Moreover, if one of the above equivalent conditions holds,
	then the Kodaira dimension \(\kappa(\mathbb{P}(T_S),\mathcal{O}(1))=1-\kappa(S)\in\{0,1\}\),
	and there is a finite \'etale cover \(S'\to S\) such that \(S'\) is either an abelian surface
	or a product \(E\times F\)
	where \(E\) is an elliptic curve and \(F\) is a smooth curve of genus \(\geq 2\).
\end{theorem}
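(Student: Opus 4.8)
The plan is to work on the projective bundle $\pi\colon\mathbb{P}(T_S)\to S$ with $L\coloneqq\mathcal{O}_{\mathbb{P}(T_S)}(1)$, and to exploit the rank-two isomorphism $\mathbb{P}(T_S)\cong\mathbb{P}(\Omega_S^1)$, under which $M\coloneqq L+\pi^*K_S=\mathcal{O}_{\mathbb{P}(\Omega_S^1)}(1)$. Using $c_1(T_S)=-K_S$, $c_2(T_S)=c_2(S)$ and the Grothendieck relation, I would first record the intersection identities $L^2\cdot\pi^*H=-K_S\cdot H$ for a divisor $H$ on $S$, $L^3=K_S^2-c_2(S)$, and, combining these, the key identity
\[
L\cdot M^2=-c_2(S),\qquad M\coloneqq L+\pi^*K_S=\mathcal{O}_{\mathbb{P}(\Omega_S^1)}(1).
\]
Two standard facts about a non-uniruled surface drive the argument: by Boucksom--Demailly--Paun--Peternell $K_S$ is pseudo-effective (so $-K_S\cdot H\le 0$ for $H$ ample) and by Miyaoka's generic semipositivity $\Omega_S^1$ is nef on a general complete-intersection curve; moreover the top Chern class equals the topological Euler number $c_2(S)=e(S)$, which is additive under blow-up and non-negative on every minimal model of non-negative Kodaira dimension, so $c_2(S)\ge 0$ always, and the blow-up formula $c_2(\mathrm{Bl}_pS)=c_2(S)+1$ shows that $c_2(S)=0$ already forces $S$ to be minimal.

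For the implication $(1)\Rightarrow(2)$ I would argue as follows. Assuming $T_S$, hence $L$, is pseudo-effective, the duality between the pseudo-effective cone of divisors and the movable cone of curves gives $L\cdot\gamma\ge 0$ for every movable curve class $\gamma$ on $\mathbb{P}(T_S)$. If the class $M^2$ lies in the closed movable cone, the key identity yields $-c_2(S)=L\cdot M^2\ge 0$, i.e.\ $c_2(S)\le 0$; combined with $c_2(S)\ge 0$ this gives $c_2(S)=0$, and minimality then follows from the blow-up formula, establishing $(2)$. The genuine obstacle is precisely to place $M^2$ in the closed movable cone. If $\Omega_S^1$ were nef then $M$ would be nef (and big), and $M^2$ would be a limit of strongly movable complete-intersection classes; but $M=\mathcal{O}_{\mathbb{P}(\Omega_S^1)}(1)$ need \emph{not} be globally nef, since special rational or low-genus curves in $S$ carry negative quotients of $\Omega_S^1$. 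The point is that, by Miyaoka's theorem, this non-nef locus is swept out only by non-movable curves, so I would build the required movable class directly from general complete-intersection curves $C\subset S$: the fibrewise Harder--Narasimhan quotient of $T_S|_C$ has non-positive slope and, as $C$ varies in a covering family, sweeps out a movable multisection of $\pi$ against which $L$ has degree $\le 0$; pseudo-effectiveness forces this degree to be $0$, the fibrewise data to glue into a foliation $\mathcal{F}\subset T_S$ with numerically trivial normal bundle, and the movability of $M^2$ to hold. Promoting the fibrewise filtration to a global foliation and controlling its singular locus is the step I expect to be hardest.

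For $(2)\Rightarrow(1)$ together with the structural ``moreover'' clause I would invoke the Enriques--Kodaira classification. A minimal $S$ with $\kappa(S)\ge 0$ and $c_2(S)=e(S)=0$ cannot be a $K3$ or Enriques surface (for which $e>0$) nor of general type (for which $c_2\ge K_S^2/3>0$ by Bogomolov--Miyaoka--Yau together with $K_S^2>0$); hence $S$ is abelian or bielliptic when $\kappa(S)=0$, and a minimal properly elliptic surface when $\kappa(S)=1$. In the elliptic case additivity of the Euler number over the fibration forces every fibre to be smooth, so after a finite \'etale cover trivialising the monodromy and removing multiple fibres one gets $S'\cong E\times F$ with $E$ elliptic and $g(F)\ge 2$ (the base has genus $\ge 2$ precisely because $\kappa(S)=1$), while an abelian or bielliptic surface acquires an abelian surface as \'etale cover. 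On the cover $h^0(S',T_{S'})\ge 1$ (indeed $T_{S'}$ is trivial in the abelian case and $\mathcal{O}_{S'}\oplus p_F^*T_F$ in the product case), so $L$ is effective, hence pseudo-effective, and \cref{lem:pe_inj} transports pseudo-effectiveness back to $S$. Finally the Iitaka-dimension formula $\kappa(\mathbb{P}(T_S),L)=1-\kappa(S)$ is read off the cover, using invariance of $\kappa(\,\cdot\,,L)$ under \'etale pullback: for an abelian cover $\mathbb{P}(T_{S'})=S'\times\mathbb{P}^1$ with $L$ the pullback of $\mathcal{O}_{\mathbb{P}^1}(1)$, giving $\kappa=1$; while for $E\times F$ the identification $\Sym^mT_{S'}\cong\bigoplus_{i=0}^m p_F^*T_F^{\otimes i}$ yields $h^0(\Sym^mT_{S'})=1$ for all $m$, giving $\kappa=0$. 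This completes the proof of \cref{mainthm:pe_iitaka_fib}.
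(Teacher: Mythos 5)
Your direction \((2)\Rightarrow(1)\) together with the ``moreover'' clause is essentially the paper's own route (Enriques--Kodaira classification, elimination of multiple fibres by an \'etale base change, the splitting \(T_{E\times F}=\mathcal{O}\oplus p_F^{*}T_F\), and invariance of \(\kappa(\cdot,\mathcal{O}(1))\) under \'etale pullback as in \cref{lem:ueno-kappa}), and it is correct up to two small slips: \(c_2(S)=0\) forces the elliptic fibration to be \emph{almost smooth} (multiples of smooth elliptic curves are allowed, since they also have Euler number zero), not fibre-smooth; and descent of pseudo-effectiveness along the finite \'etale cover uses \cite[Chapter~\Rmnum{2}, Lemma~5.6]{Nak04} as in \cref{lem:minimal_kappa_0}, not \cref{lem:pe_inj}. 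Your numerical identity \(L\cdot M^2=-c_2(S)\) with \(M=L+\pi^{*}K_S=\mathcal{O}_{\mathbb{P}(\Omega^1_S)}(1)\) is also correct, and your observation that \(c_2(S)\geq 0\) for non-uniruled surfaces with equality forcing minimality is fine.

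The genuine gap is the central step of \((1)\Rightarrow(2)\): placing \(M^2\) in the closed movable cone. You correctly flag it as the hard point, but the construction you sketch cannot deliver it. For a Miyaoka-general complete intersection curve \(C\subset S\), the maximal destabilising quotient of \(T_S|_C\) determines a \emph{section} of \(\mathbb{P}(T_S)|_C\to C\); when these fibrewise quotients glue into a saturated subsheaf (foliation) \(\mathcal{F}\subset T_S\), the resulting multisections all lie on the single prime divisor \(\mathbb{P}_S(T_S/\mathcal{F})\subset\mathbb{P}(T_S)\) --- in the crucial case \(\kappa(S)=1\) this is exactly the divisor \(Y=\mathbb{P}_S((-f^{*}K_C-E)\otimes\mathcal{I}_\Gamma)\) of \cref{note:fibration}(8). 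A family of curves confined to a divisor is \emph{not} movable in the BDPP sense, so pseudo-effectiveness of \(L\) imposes no sign condition on \(L\cdot\widetilde{C}\): the divisor they sweep out may simply occur in the negative part of the divisorial Zariski decomposition of \(L=\xi\), and ruling this scenario out (or exploiting it) is precisely what the paper's proof consists of --- \cref{prop:non_iso_non_pe} restricts the divisorial Zariski decomposition to general fibres to force \(Y\) into the negative part in the non-isotrivial case, \cref{lem:xi_to_Y_nonpe,lem:T_pe_YY_not,lem:YY_non_pef} compute \(\xi|_Y\) and \(Y|_Y\) on \(Y\cong\Bl_{\mathcal{I}_\Gamma}S\) case-by-case through the Kodaira fibre types and apply Zariski's lemma in the isotrivial case, and \cref{prop:kappa_0_pe_minimal,thm:minimal} handle non-minimal surfaces via Maruyama's elementary transformation. (Your blow-up formula does give minimality \emph{once} \(c_2(S)=0\) is known, but deducing \(c_2(S)\leq 0\) from pseudo-effectiveness is the unproven core.) Moreover, even the gluing of the fibrewise Harder--Narasimhan filtration into a foliation with numerically trivial normal bundle is a nontrivial Mehta--Ramanathan-type step, and, granted it, the conclusion ``\(M^2\) is movable'' still does not follow --- if anything the foliation produces exactly the non-movable divisor above. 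So, as written, the proposal proves \((2)\Rightarrow(1)\) and the structure statements, but not \((1)\Rightarrow(2)\).
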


We refer readers to \cite[Proposition 1.10 and Theorem 1.12]{HP21} for a more general result on projective manifolds with pseudo-effective tangent bundles by using the foliation theory (cf.~\cref{rmk-comparison}).
However, our result here is obtained by applying more elementary tools like surface fibration theory (cf.~\cite{Ser96}), Marayama's elementary transformation method (cf.~\cite{Mar82}) and the stability of cotangent bundles (cf.~\cite{Bog79} and \cite{Eno87}), which are well-known to algebraic geometers.

From \cref{mainthm:pe_iitaka_fib}, the pseudo-effectiveness of the tangent bundle
forces the surface to be minimal,
i.e., the canonical divisor is nef.
However, this is no longer true in the higher dimensional case (cf.~\cref{exa:fail-minimal}).
Besides, as the second Chern class of a smooth projective surface of general type is always positive
(cf.~\cite[\Rmnum{7}, (2.4)~Proposition]{BHPV04}),
our result excludes the possibility of a general type surface having pseudo-effective tangent bundle;
see \cref{prop:kappa_2_tx} and \cref{rmk:generaltype-high} for the case of higher dimensional varieties.
As a consequence of \cref{mainthm:pe_iitaka_fib}, we obtain the following corollary.

\begin{corollary}\label{cor:pe-qeff}
	Let \(S\) be a non-uniruled smooth projective surface.
	If the tangent bundle \(T_S\) is pseudo-effective,
	then there is some integer \(m\) such that \(H^0(S,\Sym^mT_S)\neq 0\);
	in particular, the tautological line bundle of \(\mathbb{P}(T_S)\) is \(\mathbb{Q}\)-linearly equivalent to an effective divisor.
\end{corollary}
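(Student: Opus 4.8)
The plan is to reduce to the finite \'etale cover supplied by \cref{mainthm:pe_iitaka_fib} and then descend effectivity. By that theorem, the pseudo-effectiveness of \(T_S\) forces the existence of a finite \'etale cover \(\pi\colon S'\to S\), say of degree \(d\), such that \(S'\) is either an abelian surface or a product \(E\times F\) with \(E\) an elliptic curve and \(F\) a smooth curve of genus \(\geq 2\). In both cases I would first exhibit a nonzero section of the tangent bundle of \(S'\) itself. If \(S'\) is abelian, then \(T_{S'}\cong\mathcal{O}_{S'}^{\oplus 2}\) is trivial, so \(H^0(S',T_{S'})\neq 0\). If \(S'=E\times F\), then \(T_{S'}\cong p_E^*T_E\oplus p_F^*T_F\) and \(T_E\cong\mathcal{O}_E\) because \(E\) is elliptic, whence \(T_{S'}\) contains \(\mathcal{O}_{S'}\) as a direct summand and again \(H^0(S',T_{S'})\neq 0\). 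Thus in either case \(H^0(S',\Sym^{m'}T_{S'})\neq 0\) already for \(m'=1\).

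The second step is to transport this to \(S\). Since \(\pi\) is \'etale, its differential identifies \(T_{S'}\) with \(\pi^*T_S\), so \(\Sym^{m}T_{S'}\cong\pi^*\Sym^{m}T_S\) for every \(m\). Passing to projectivisations, base change gives \(\mathbb{P}(T_{S'})\cong\mathbb{P}(T_S)\times_S S'\), and the induced projection \(\Pi\colon\mathbb{P}(T_{S'})\to\mathbb{P}(T_S)\) is finite \'etale of degree \(d\) with \(\mathcal{O}_{\mathbb{P}(T_{S'})}(1)\cong\Pi^*\mathcal{O}_{\mathbb{P}(T_S)}(1)\). Writing \(X=\mathbb{P}(T_S)\) and \(X'=\mathbb{P}(T_{S'})\) and using \(\pi_*\mathcal{O}_{\mathbb{P}(E)}(m)\cong\Sym^mE\), the first step says \(H^0(X',\mathcal{O}_{X'}(1))\neq 0\), i.e.\ \(\kappa\bigl(X',\mathcal{O}_{X'}(1)\bigr)\geq 0\).

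The final step is the descent of the Iitaka dimension along the finite surjective morphism \(\Pi\). Since \(\Pi\) is finite surjective and \(\mathcal{O}_{X'}(1)=\Pi^*\mathcal{O}_X(1)\), the Iitaka dimension is preserved, so \(\kappa\bigl(X,\mathcal{O}_X(1)\bigr)=\kappa\bigl(X',\mathcal{O}_{X'}(1)\bigr)\geq 0\). Concretely, one may take the norm of a nonzero section \(s\in H^0(X',\Pi^*\mathcal{O}_X(1))\), which produces a nonzero section of \(\mathcal{O}_X(1)^{\otimes d}=\mathcal{O}_X(d)\); equivalently, the norm (cycle pushforward) of the effective divisor \(\{s=0\}\) gives an effective divisor \(D\in|\mathcal{O}_X(d)|\). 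Hence \(H^0(X,\mathcal{O}_X(d))\neq 0\), that is \(H^0(S,\Sym^{d}T_S)\neq 0\), so we may take \(m=d\). The divisor \(D\) then witnesses \(\mathcal{O}_X(1)\sim_{\mathbb{Q}}\tfrac1d D\), proving that the tautological line bundle is \(\mathbb{Q}\)-linearly equivalent to an effective divisor.

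I expect the only genuine point requiring care to be this last descent: one must verify that the norm of a nonzero section remains nonzero (equivalently, that \(\kappa\) does not drop under \(\Pi\)), which relies on the finiteness and flatness of \(\Pi\) together with the integrality of \(X'\). Everything preceding it is a direct reading of \cref{mainthm:pe_iitaka_fib} and an elementary identification of sections on the \'etale cover.
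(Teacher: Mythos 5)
Your proposal is correct and follows essentially the paper's own route: the corollary is derived from \cref{mainthm:pe_iitaka_fib}, and your descent of sections from the \'etale cover \(S'\) (abelian or \(E\times F\)) via \(\mathcal{O}_{\mathbb{P}(T_{S'})}(1)=\Pi^{*}\mathcal{O}_{\mathbb{P}(T_S)}(1)\) is exactly the \'etale base change mechanism the paper implements through \cref{lem:ueno-kappa} (Ueno's theorem), which indeed applies since \(\Pi\) is finite surjective between projective varieties. The norm-of-a-section argument you sketch as the concrete witness is a valid, slightly more hands-on substitute for invoking \cref{lem:ueno-kappa}, and the step you flag as delicate does work because \(\mathbb{P}(T_{S'})\) is integral and \(\Pi\) is finite flat.
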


Before moving to the second part of this section, we give one remark as kindly pointed out by H\"oring.
\begin{remark}\label{rmk-cy-fail}
	Different from \cref{mainthm:pe_iitaka_fib} in the surface case,
	the implication \((2)\Rightarrow (1)\) in \cref{conj:main} would have a negative answer in \(\dim X\geq 3\)
	if we drop the assumption on the non-vanishing of the augmented irregularity.
	Indeed, there do exist a few smooth (strict) Calabi-Yau threefolds which have the vanishing top Chern classes (cf.~\cite[Fig~1]{KS00}).
	On the other hand, as proved in \cite[Theorem~1.6]{HP19} (cf.~\cite[Corollary~6.5]{Dru18}),
	the tangent bundle of a (strict) Calabi-Yau manifold is never pseudo-effective.
\end{remark}

Kim proved in \cite{Kim22} that
a projective bundle \(\mathbb{P}_C(\mathcal{E})\) over a smooth curve \(C\) has a big tangent bundle
if and only if either \(C\cong\mathbb{P}^1\) or \(\mathcal{E}\) is not semi-stable.
Indeed, as kindly pointed out by Kim, according to the proof of  \cite{Kim22}, 
any projective bundle (of arbitrary rank) over
a smooth projective curve
always has a pseudo-effective tangent bundle; see \cref{prop:ruled_surfaces}.
We slightly summarise this result in \cref{sec:ruled}.

In terms of the non-rational ruled surface,
by analysing the position of a single blow-up,
we show the following proposition.
In particular, compared with the non-uniruled surfaces,
there does exist a non-relatively-minimal non-rational uniruled surface
with pseudo-effective but non-big tangent bundle.

\begin{proposition}\label{prop:uniruled_blowup}
	Let \(f\colon S=\mathbb{P}_C(\mathcal{E})\to C\) be a \(\mathbb{P}^1\)-bundle
	over a smooth curve \(C\) with the genus \(g(C)\geq 1\).
	Suppose the tangent bundle \(T_S\) is pseudo-effective but not big.
	Then the blow-up of \(S\) along a point \(p\) has a pseudo-effective tangent bundle
	if and only if there exist some positive integer \(m\) and some line bundle
	\(\mathcal{L}\) which is numerically equivalent to the relative tangent bundle \(T_{S/C}\)
	such that \(H^0(S,\mathfrak{m}_p^m\otimes\mathcal{L}^{\otimes m})\neq 0\),
	where \(\mathfrak{m}_p\) is the maximal ideal of the local ring \(\mathcal{O}_{S,p}\).
\end{proposition}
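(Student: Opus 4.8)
The plan is to detect pseudo-effectiveness of \(T_{\tilde S}\), where \(\pi\colon\tilde S=\Bl_p S\to S\) denotes the blow-up with exceptional curve \(i\colon E\hookrightarrow\tilde S\), through sections of the twisted symmetric powers \(\Sym^m T_{\tilde S}\otimes\pi^*M\) and to transport these to sections on \(S\). The starting point is the differential \(d\pi\colon T_{\tilde S}\to\pi^*T_S\), an isomorphism off \(E\), which yields the short exact sequence
\[
0\to T_{\tilde S}\to\pi^*T_S\to i_*\mathcal{O}_E(1)\to 0 ;
\]
this is checked in the standard charts \(x=u,\ y=ut\) of the blow-up and is consistent with \(c_1(T_{\tilde S})=\pi^*c_1(T_S)-[E]\). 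Taking symmetric powers gives an inclusion \(\Sym^m T_{\tilde S}\hookrightarrow\pi^*\Sym^m T_S\), so that for any line bundle \(M\) on \(S\),
\[
H^0\bigl(\tilde S,\Sym^m T_{\tilde S}\otimes\pi^*M\bigr)\hookrightarrow H^0\bigl(S,\Sym^m T_S\otimes M\bigr),
\]
the image being exactly those sections whose pullback stays regular inside the subsheaf \(\Sym^m T_{\tilde S}\) near \(E\). In the chart above, dividing the local generator \(\partial_y^{\otimes m}\) of the fibre direction by \(u^m\) shows that this regularity is precisely a vanishing of order \(m\) at \(p\), i.e.\ an \(\mathfrak{m}_p^m\)-condition.

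Next I would use the relative Euler sequence \(0\to T_{S/C}\to T_S\to f^*T_C\to 0\) to filter \(\Sym^m T_S\) with graded pieces \(T_{S/C}^{\otimes(m-j)}\otimes(f^*T_C)^{\otimes j}\), \(j=0,\dots,m\), whose leading sub-piece is \(T_{S/C}^{\otimes m}\). Because \(T_S\) is pseudo-effective but not big (cf.~\cref{prop:ruled_surfaces}), the tautological class \(\xi_S=\mathcal{O}_{\mathbb{P}(T_S)}(1)\) sits on the boundary of the effective cone and its positive part is carried by the foliation \(T_{S/C}\); the horizontal pieces \((f^*T_C)^{\otimes j}\) with \(j>0\) are numerically flat along the fibres and cannot enlarge the effective locus. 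Thus pseudo-effectiveness of \(\xi_{\tilde S}\) should be governed entirely by the leading piece: writing \(\mathcal{L}=T_{S/C}\otimes f^*\beta\) with numerically trivial \(f^*\beta\) (so \(\mathcal{L}\equiv T_{S/C}\), using that numerically trivial line bundles on \(S\) are pulled back from \(C\)) and \(\mathcal{L}^{\otimes m}=T_{S/C}^{\otimes m}\otimes M\), the inclusion \(T_{S/C}^{\otimes m}\hookrightarrow\Sym^m T_S\) combined with the order-\(m\) vanishing from the first step turns the criterion into exactly \(H^0(S,\mathfrak{m}_p^m\otimes\mathcal{L}^{\otimes m})\neq 0\).

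For the converse, a section \(s\in H^0(S,\mathfrak{m}_p^m\otimes\mathcal{L}^{\otimes m})\), viewed via \(T_{S/C}^{\otimes m}\hookrightarrow\Sym^m T_S\), produces after pulling back and dividing by the local equation of \(E\) a genuine section of \(\Sym^m T_{\tilde S}\otimes\pi^*M\); this exhibits an effective \(\mathbb{Q}\)-divisor in the class of \(\xi_{\tilde S}\) up to a pullback from \(\tilde S\), hence \(T_{\tilde S}\) is pseudo-effective. This direction is the explicit construction reversing the first two steps and uses only \(\pi_*\mathcal{O}_{\tilde S}(-mE)=\mathfrak{m}_p^m\) together with the projection formula; it is compatible with \cref{cor:pe-qeff}, which already guarantees that the pseudo-effective boundary is witnessed by honest symmetric-power sections.

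The hard part will be the forward implication: passing from pseudo-effectiveness — a closed-cone, purely numerical statement — back to an honest global section realising the \emph{exact} vanishing order \(m\) along \(E\). Concretely, I expect two difficulties. First, one must upgrade the boundary class \(\xi_{\tilde S}\) from a mere limit of effective classes to a \(\mathbb{Q}\)-effective one; this should follow from the rigidity forced by \(T_S\) being non-big, which makes the relevant numerical dimension drop. Second, one must carry out the elementary-transformation bookkeeping along \(E\) sharply enough to rule out contributions from the lower graded pieces \((f^*T_C)^{\otimes j}\) and from non-numerically-trivial twists \(M\), thereby isolating the single direction \(\mathcal{L}\equiv T_{S/C}\). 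Tracking how the discrepancy \(E\) converts the inclusion \(\Sym^m T_{\tilde S}\subset\pi^*\Sym^m T_S\) into the precise \(\mathfrak{m}_p^m\)-condition is the technical heart of the argument.
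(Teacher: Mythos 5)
Your ``if'' direction is essentially correct and coincides with the paper's: a section of \(\mathfrak{m}_p^m\otimes\mathcal{L}^{\otimes m}\) with \(\mathcal{L}\equiv T_{S/C}\) shows that the surface class \(T_{S_2/C}=h^{*}T_{S/C}-D\) (in the paper's notation \(h\colon S_2\to S\) with exceptional curve \(D\)) is pseudo-effective, and then \(\xi_2\sim Y_2+\pi_2^{*}T_{S_2/C}\) with \(Y_2\) a prime divisor (\cref{note:fibration}(9), \cref{lem:pe_inj}) gives pseudo-effectivity of \(T_{S_2}\). The genuine gap is the forward implication, which you yourself flag as ``the hard part'' but never actually prove: you pass from the numerical statement ``\(\xi_2\) pseudo-effective'' to an honest section only via two hopes, and neither works as stated. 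First, pseudo-effectiveness of a vector bundle does \emph{not} by itself produce sections of \(\Sym^m T\otimes(\text{num.\ trivial})\); the paper's introduction explicitly says this is unclear in general, and \cref{cor:pe-qeff}, which you cite for compatibility, is a \emph{consequence} of the main theorem for non-uniruled surfaces and does not apply to the uniruled surface \(S_2\). Second, your identification of the image of \(H^0(\Sym^m T_{S_2}\otimes h^{*}M)\hookrightarrow H^0(\Sym^m T_S\otimes M)\) with an order-\(m\) vanishing condition at \(p\) is incorrect for general sections: writing \(\partial_x=\partial_u-(t/u)\partial_t\), \(\partial_y=u^{-1}\partial_t\), the coefficient of \(\partial_u^{m-j}\partial_t^{j}\) acquires a pole of order only \(j\), so membership imposes vanishing conditions of all orders \(0,\dots,m\) on the graded pieces; only the leading piece \(T_{S/C}^{\otimes m}\) sees the full \(\mathfrak{m}_p^m\)-condition. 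Consequently your assertion that the horizontal pieces \((f^{*}T_C)^{\otimes j}\) ``cannot enlarge the effective locus'' is precisely the statement requiring proof, not an input.

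For comparison, the paper closes this gap with two Zariski-decomposition arguments, neither of which appears in your proposal. On the threefold \(\mathbb{P}(T_{S_2})\): since \(\xi_2|_{Y_2}=-\Exc(\pi_2|_{Y_2})-(\pi_2|_{Y_2})^{*}f_2^{*}K_C\) is anti-pseudo-effective and non-zero (this is where \(g(C)\geq 1\) enters), the prime divisor \(Y_2\) must occur in the negative part of the divisorial Zariski decomposition of \(\xi_2\) with coefficient \(0<a_1<1\) (the upper bound because \(\xi_2-Y_2\sim\pi_2^{*}T_{S_2/C}\) would otherwise be pseudo-effective); restricting \(\xi_2-a_1Y_2\) to \(Y_2\) and pushing forward by \(\pi_2|_{Y_2}\) yields the class \(-(1-a_1)f_2^{*}K_C+a_1T_{S_2/C}\) on \(S_2\), which is visibly not pseudo-effective once \(T_{S_2/C}\) is not --- this is what eliminates the horizontal contributions. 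Then, on the surface, the numerical-to-effective upgrade you wished for is achieved by the honest Zariski decomposition of \(h^{*}T_{S/C}-D\): its negative part is an actual effective divisor, it must contain a non-exceptional prime \(P_1\) with \(P_1^2<0\) because \((h^{*}T_{S/C}-\mu D)^2<0\) for all \(\mu>0\), and since \(T_{S/C}\equiv 2C_0-f^{*}\det\mathcal{E}\) spans an extremal ray of \(\overline{\NE}(S)=\Nef(S)\) (semi-stability of \(\mathcal{E}\), via \cref{prop:ruled_surfaces} and Miyaoka), pushing forward forces \(T_{S/C}\equiv t\,h_{*}P_1\) for a single prime \(P_1\); the bookkeeping \(h^{*}h_{*}P_1=P_1+sD\) with \(st\geq 1\) then hands you the section of \(\mathfrak{m}_p^m\otimes\mathcal{L}^{\otimes m}\). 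So the ``rigidity forced by \(T_S\) being non-big'' that you invoke is real, but it operates through extremality in \(\N^1\) of the surfaces together with Zariski decompositions, not through the symmetric-power filtration, and without some such mechanism your forward direction does not go through.
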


Making \cref{prop:uniruled_blowup} as an initial point,
we would like to give a rather clean description of   non-rational uniruled projective surfaces
admitting pseudo-effective but non-big tangent bundles.

We summarise the organisation of our paper.
In \cref{sec:pre}, we prepare some preliminary results for the convenience of later use.
In \cref{sec:kappa_02},
we prove our  \cref{mainthm:pe_iitaka_fib} for the cases \(\kappa(S)=0\) and \(2\).
In \cref{sec:elliptic_fib}, we study the case \(\kappa(S)=1\) when \(S\) is minimal,
and the minimality of such \(S\) with pseudo-effective tangent bundle will be shown in \cref{sec-pf-main}.
Finally, we
prove \cref{prop:uniruled_blowup} in \cref{sec:ruled}.

Let us end up the introduction with the following remark.
\begin{remark}[Comparison with previous papers]\label{rmk-comparison}
	In \cite[Theorem~1.1]{Mat22} (cf.~\cite{HIM22}), Matsumura nicely establishes the minimal model program of a projective klt variety
	with strongly pseudo-effective (reflexive) tangent sheaf
	and ends up with a quasi-\'etale quotient of an abelian variety.
	Here, a vector bundle \(E\) on a smooth projective variety \(X\)
	is said to be strongly pseudo-effective
	if \(\mathcal{O}_{\mathbb{P}(E)}(1)\) is pseudo-effective
	and the restricted base locus does not dominate \(X\)
	(cf.~\cite[Definition~7.1]{BDPP13}).
	Since our assumption is weaker than the assumption in \cite{Mat22},
	even in the surface case, we could not expect to obtain a similar result like \cite[Theorem~1.1]{Mat22}.
	The main reason is that,
	when running the minimal model program,
	although our (weak) pseudo-effectiveness of the tangent bundle
	descends along any birational contraction or flips (cf.~\cref{lem:pe_inj}),
	it could not be preserved under a Fano contraction any more
	(see \cite{Kim22} and \cref{prop:uniruled_blowup}; cf.~\cite[Proposition~3.2]{Mat22}).

    After the present paper was finished, we are informed by Professor Andreas H\"oring on his previous joint work \cite{HP21} which gives a nice splitting structure of the tangent bundles of non-uniruled projective manifolds which are pseudo-effective by using a deep foliation theory (cf.~\cite[Proposition 1.10]{HP21}).
    As a result, our \cref{conj:main} has a positive answer in lower dimensional case. 
    Compared with \cite{HP21}, the proof of our result here is more elementary and more self-contained to algebraic-geometers.
\end{remark}

\subsubsection*{\textbf{\textup{Acknowledgements}}}
We would like to thank Professor Andreas H\"oring for informing us his previous joint work \cite{HP21} and the valuable comments.
We would also like to thank Doctor Jeong-Seop Kim  for the valuable comments and suggestions.
The first author is supported by a President's Graduate Scholarship from NUS.
The second and the third authors are supported by the Institute for Basic Science (IBS-R032-D1-2022-a00).

\section{Preliminary}\label{sec:pre}

First of all, we fix the following notation throughout the paper.

\begin{notation}
	Let \(X\) be a projective variety.
	\begin{enumerate}[wide=0pt,leftmargin=*]
		\item The symbol \(\sim\) (resp.~\(\sim_{\mathbb Q}\), \(\equiv\)) denotes
		      the \emph{linear equivalence} (resp.~\emph{\(\mathbb{Q}\)-linear equivalence}, \emph{numerical equivalence})
		      on Cartier divisors
		      (resp.~\(\mathbb{Q}\)-Cartier divisors).
		      Let \(f\colon X\to Y\) be a morphism of projective varieties.
		      We denote by \(\sim_f\)  the \emph{relative (or \(f\)-) linear equivalence} of Cartier divisors,
		      i.e., for two Cartier divisors \(D_1\) and \(D_2\) on \(X\),
		      \(D_1\sim_f D_2\) if and only if there is some Cartier divisor \(E\) on \(Y\) such that \(D_1-D_2\sim f^{*}E\).
		\item Denote by \(\NS(X)\)  the \emph{N\'eron-Severi group} of \(X\).
		      Let \(\N^1(X)\coloneqq\NS(X)\otimes_{\mathbb{Z}}\mathbb{R}\) be the space of \(\mathbb{R}\)-Cartier divisors
		      modulo numerical equivalence and \(\rho(X)\coloneqq\dim_{\mathbb{R}}\N^1(X)\)
		      the \emph{Picard number} of \(X\).
		      Let \(\N_1(X)\) be the dual space of \(\N^1(X)\) consisting of 1-cycles.
		      Denote by \(\Nef(X)\) (resp.~\(\PE(X)\)) the cone of \emph{nef divisors} (resp.~\emph{pseudo-effective divisors})
		      in \(\N^1(X)\) and \(\overline{\NE}(X)\) the dual cone consisting of \emph{pseudo-effective 1-cycles} in \(\N_1(X)\).
		      In particular, when \(X\) is a smooth projective surface,
		      we have \(\N_1(X)=\N^1(X)\) and \(\overline{\NE}(X)=\PE(X)\).
		\item For a smooth projective variety \(X\),
		      we denote by \(K_X\) the \emph{canonical divisor} and \(\kappa(X)=\kappa(X,K_X)\)
		      the \emph{Kodaira dimension} of \(X\).
		\item Let \(f\colon X\to C\) be a surjective morphism between normal projective varieties.
		      We say that \(f\) is a \emph{fibration} if \(f_{*}\mathcal{O}_X=\mathcal{O}_C\)
		      or equivalently, the general fibre of \(f\) is connected.
		      A fibration is said to be \emph{isotrivial}
		      if all the smooth fibres are isomorphic to each other;
		      otherwise, we say that it is \emph{non-isotrivial}.
		      We say that \(f\) is \emph{trivial}
		      if there exist another projective variety \(F\) and an isomorphism \(X\cong F\times C\)
		      such that \(f\) is the natural projection.
		      We say that \(f\) is \emph{locally trivial} (or a \emph{fibre bundle})
		      if each point \(c\in C\) is contained in a small open neighbourhood \(U\)
		      having the property that \(f^{-1}(U)\) is trivial over \(U\).
		      See \cref{note:fibration} for a more detailed description on surface fibrations.
		\item Let \(H\) be a nef and big divisor on \(X\).
		      Let \(\mathcal{F}\) be a torsion free coherent sheaf on \(X\).
		      The \emph{slope} of \(\mathcal{F}\) with respect to \(H\) is defined to be the rational number
		      \[
			      \mu_H(\mathcal{F})\coloneqq\frac{c_1(\mathcal{F})\cdot H^{\dim (X)-1}}{\rank(\mathcal{F})}
		      \]
		      where \(c_1\) is the first Chern class.
		      A torsion free coherent sheaf \(\mathcal{E}\) is said to be \emph{\(\mu\)-semi-stable}
		      if for any non-zero subsheaf \(\mathcal{F}\subseteq\mathcal{E}\),
		      the slopes satisfy the inequality \(\mu_H(\mathcal{F})\leq\mu_H(\mathcal{E})\).
	\end{enumerate}
\end{notation}

Next, we recall and develop some basic properties on the fibration of surfaces.
Most of them come from \cite[Section~3]{Ser96} and \cite[Section~5]{HP20}.

\begin{notation}[Surface fibration]\label{note:fibration}
	\leavevmode%
	\begin{enumerate}[wide=0pt,leftmargin=*]
		\item Let \(S\) be a smooth projective surface and \(f\colon S\to C\) a fibration with \(F\)  a general fibre of \(f\).
		\item For a closed point \(c\in C\),
		      we denote by \(S_{c}\) its fibre over \(c\).
		\item Let \(W\coloneqq \mathbb{P}(T_S)\) be the Grothendieck projectivisation of the tangent bundle of \(S\) with \(\pi\colon W\to S\) the natural projection,
		      and let \(\xi\coloneqq \mathcal{O}_{W}(1)\) be the corresponding tautological class.
		\item Let \(\{\nu_{i}E_{i}\}_{i\in I}\) be the set of all components of non-multiple fibres of \(f\),
		      where the \(\nu_{i}\)'s denote their corresponding multiplicities within their fibres.
		      Let \(\{m_{j}F_{j}\}_{j\in J}\) be the set of multiple fibres of \(f\).
		      Clearly, for each \(j\in J\) and \(c\in C\),
		      we have \(m_jF_j\equiv\sum_{i\in I, f(E_i)=c}\nu_iE_i\equiv F\).
		\item Let
		      \[
			      E=E_S \coloneqq \sum_{c\in C}f^{*}c-(f^{*}c)_{\textup{red}}
			      = \sum_{i\in I} (\nu_{i}-1)E_{i} + \sum_{j\in J} (m_{j}-1)F_{j}
			      = E_{0} + \sum_{j\in J} (m_{j}-1)F_{j}
		      \]
		      where
		      \(E_0\coloneqq\sum_{i\in I} (\nu_i-1)E_i\)
		      comes from the non-multiple non-reduced fibres.
		      This decomposition is indeed the Zariski decomposition
		      where \( \sum_{j\in J} (m_{j}-1)F_{j}\) is the nef part and \(E_{0}\) is the fixed part.
		\item Applying \cite[Section~3]{Ser96},
		      we have an exact sequence
		      \[
			      0\longrightarrow T_{S/C}\longrightarrow T_{S}\longrightarrow f^{*}T_{C}\longrightarrow\mathcal{F}\longrightarrow 0,
		      \]
		      where \(T_{S}\), \(T_{C}\) are the tangent bundles of \(S\) and \(C\) respectively, \(\mathcal{F}\) is a torsion sheaf
		      and \(T_{S/C}\coloneqq \Omega_{S/C}^{\vee}\) is the relative tangent sheaf of \(f\), which is locally free.
		      Let \(J_{S/C}\) be the torsion-free image of \(T_{S}\to f^{*}T_{C}\).
		\item By \cite[(3.0.3) and Proposition~3.1]{Ser96},
		      we have
		      \[
			      J_{S/C}=J_{S/C}^{\vee\vee}\otimes\mathcal{I}_\Gamma
			      =K_{S}^{-1}\otimes T_{S/C}^{-1}\otimes\mathcal{I}_{\Gamma}
		      \]
		      where \(\mathcal{I}_\Gamma\) is an ideal sheaf and the support of \(\Gamma\) consists of points \(s\in S\)
		      such that the reduced structure \(f^{-1}(f(s))_{\textup{red}}\) is singular at \(s\).
		      In addition, we have
		      \(T_{S/C}=-K_{S}+f^{*}K_{C}+E\)
		      where \(E\) is defined in (5).
		      In particular, we have the following short exact sequence
		      \begin{equation}\label{eq:blowup_gamma}
			      \tag{\dag}
			      0\longrightarrow T_{S/C} \longrightarrow T_{S}
			      \longrightarrow (-f^{*}K_{C}-E)\otimes\mathcal{I}_\Gamma \longrightarrow 0.
		      \end{equation}
		\item Let
		      \(Y\coloneqq \mathbb{P}_S((-f^{*}K_C-E)\otimes\mathcal{I}_\Gamma)\cong \mathbb{P}_S(\mathcal{I}_{\Gamma})\subseteq W\)
		      which is a prime divisor on \(W\).
		      Note that \(Y\) is isomorphic to the blow-up of \(S\) along the ideal sheaf \(\mathcal{I}_\Gamma\)
		      since \(\Gamma\) is locally generated by a regular sequence
		      (cf.~\cite[Proposition~3.1 (iii)]{Ser96} and \cite[\S~3.10]{HP20}).
		      However, since the \(\length(\mathcal{O}_{\Gamma,s})\) at each point \(s\in \Gamma\) is not necessarily \(1\),
		      such \(Y\) in general is not necessarily smooth.
		\item Denote by \(\Exc(\pi|_Y)\) the exceptional divisor of \(\pi|_Y\).
		      By the short exact sequence \eqref{eq:blowup_gamma} and \cite[Chapter~\Rmnum{2}, Lemma~7.9 and Proposition~7.13]{Har77},
		      we have \(\xi|_{Y}=\mathcal{O}_{W}(1)|_{Y}=\mathcal{O}_{Y}(1)\)
		      where
		      \[\mathcal{O}_Y(1)=\mathcal{O}_Y(-\Exc(\pi|_Y))\otimes (\pi|_Y)^{*}(-f^{*}K_C-E).\]
		      Moreover, from \cite[\Rmnum{4}, (10.5)~Lemma]{BHPV04} and its proof,
		      we know that \(\xi-Y\sim\pi^{*}T_{S/C}\)
		      and \((\pi|_Y)_{*}\mathcal{O}_Y(1)=(-f^{*}K_C-E)\otimes\mathcal{I}_{\Gamma}\).
	\end{enumerate}
\end{notation}

Now, we recall Maruyama's elementary transformation which will be heavily used in our proofs.
\begin{notation}[Elementary transformation for projective bundles over surface blow-ups]\label{note:elementary_transform}
	\leavevmode%
	\begin{enumerate}[wide=0pt,leftmargin=*]
		\item Let \(h\colon S_2\to S_1\) be a blow-up between smooth projective surfaces
		      with the exceptional \((-1)\)-curve \(i\colon D\hookrightarrow S_2\).
		      Let \(\xi_i\) be the tautological divisor of the projective bundle \(\mathbb{P}(T_{S_i})\)
		      and \(\pi_i\colon \mathbb{P}(T_{S_i})\to S_i\)  the natural projection.
		\item There is a natural short exact sequence
		      \[
			      0\longrightarrow T_{S_2}\longrightarrow h^{*}T_{S_1} \longrightarrow i_{*}T_{D}(D)\longrightarrow 0.
		      \]
		      Here, \(T_D(D)\simeq \mathcal{O}_D(1)\) and \(\widetilde{D}\coloneqq\mathbb{P}_D(T_D(D))\)
		      is a projective subbundle of \(D'\coloneqq\widetilde{\pi_1}^{*}D=\mathbb{P}_D(h^{*}T_{S_1})\)
		      defined by the restriction of the following exact sequence
		      \[
			      0\to K=\mathcal{O}_D(-1) \to h^{*}T_{S_1}|_{D}\to \mathcal{O}_D(1)\to 0,
		      \]
		      where the kernel \(K=\det (h^{*}T_{S_1}|_D)\otimes \mathcal{O}_D(1)^{\vee}=\mathcal{O}_D(-1)\) by the projection formula.
		\item Applying Maruyama's elementary transformation (cf.~\cite[Theorem~1.4 and (1.7)]{Mar82}),
		      we have the following commutative diagram,
		      where \(\beta\) is the blow-up of \(\mathbb{P}(h^{*}T_{S_1})\) along \(\widetilde{D}\)
		      and \(\alpha\) is the blow-down of \(\Bl_{\widetilde{D}}(\mathbb{P}(h^{*}T_{S_1}))\)
		      along the \(\beta\)-strict transform of \(D'=\widetilde{\pi_1}^{*}D\).
		      \[
			      \xymatrix{
			      & & \Bl_{\widetilde{D}}(\mathbb{P}(h^{*}T_{S_1}))\ar[dl]_{\beta}\ar[dr]^{\alpha}\\
			      \mathbb{P}(T_{S_1})\ar[d]^{\pi_1}
			      &\mathbb{P}(h^{*}T_{S_1})\ar@{-->}[rr]\ar[d]^{\widetilde{\pi_1}}\ar[l]_{\widetilde{h}}
			      & & \mathbb{P}(T_{S_2})\ar[d]^{\pi_2}\\
			      S_1
			      & S_2\ar@{=}[rr]\ar[l]_{h}
			      & & S_2
			      }
		      \]
		\item By \cite[Theorem~1.4 and (1.7)]{Mar82}, we have
		      \(\beta^{*}\widetilde{h}^{*}\xi_1\sim\alpha^{*}\xi_2+G\)  where \(G\) is the \(\beta\)-exceptional divisor.
		\item The \(\beta\)-blown-up section \(\widetilde{D}\) satisfies \(\widetilde{D}\sim c_0+e\)
		      in \(D'\)
		      where \(c_0=\widetilde{h}^{*}\xi_1|_{D'}\) is a tautological divisor of \(D'\to D\)
		      and \(e\) is a fibre (cf.~\cite[Chapter V, Proposition 2.6]{Har77}).
		      In particular, \(\widetilde{D}\not\subseteq\widetilde{h}^{*}Y_1\)
		      for any section \(Y_1\in |\xi_1|\) by the above linear equivalence.
	\end{enumerate}
\end{notation}

\medskip

In what follows, we collect several results to be used in the subsequent sections.

\begin{lemma}[{\cite[Lemma~2.2]{HLS22}; cf.~\cite[Lemma 2.7]{Dru18}}]\label{lem:pe_nonvanishing}
	Let \(X\) be a projective variety,
	\(\mathcal{E}\) a vector bundle on \(X\), and
	\(H\) a big \(\mathbb{Q}\)-Cartier \(\mathbb{Q}\)-divisor on \(X\).
	Then \(\mathcal{E}\) is pseudo-effective if and only if for all \(c>0\)
	there exist sufficiently divisible integers \(i,j\in \mathbb{N}\) such that \(i>cj\) and
	\[
		H^{0}(X,\Sym^i\mathcal{E}\otimes\mathcal{O}_{X}(jH))\neq 0.
	\]
\end{lemma}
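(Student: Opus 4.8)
The plan is to descend to the projectivisation \(W\coloneqq\mathbb{P}(\mathcal E)\), with projection \(\pi\colon W\to X\) and tautological class \(\xi\coloneqq\mathcal O_{\mathbb{P}(\mathcal E)}(1)\), so that by definition \(\mathcal E\) is pseudo-effective if and only if \(\xi\in\PE(W)\). The projection formula together with \(\pi_*\mathcal O_W(i)=\Sym^i\mathcal E\) and the vanishing of the higher direct images gives, for \(i\ge 0\) and \(j\) sufficiently divisible,
\[
H^0\bigl(X,\Sym^i\mathcal E\otimes\mathcal O_X(jH)\bigr)=H^0\bigl(W,\,i\xi+j\pi^*H\bigr),
\]
so the group in the statement vanishes precisely when the class \(i\xi+j\pi^*H\) carries no section. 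Hence the whole lemma becomes an assertion about which ratios \(j/i\) make \(\xi+\tfrac{j}{i}\pi^*H\) effective, and the task is to relate this to membership of \(\xi\) in the closed cone \(\PE(W)\).

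First I would collect three standard inputs: (i) \(\xi\) is \(\pi\)-ample, so for a fixed ample divisor \(A\) on \(X\) the class \(\xi+t\pi^*A\) is ample on \(W\) once \(t\ge t_0\) for some \(t_0>0\); (ii) Kodaira's lemma, writing the big class \(H\sim_{\mathbb Q}A+\Delta\) with \(A\) ample and \(\Delta\ge0\) on \(X\); and (iii) that \(\PE(W)\) is closed, that a pseudo-effective class plus an ample class is big, and that a big \(\mathbb Q\)-class has a section in all sufficiently divisible multiples. With these I introduce the threshold
\[
s_0\coloneqq\inf\{\,s\ge 0:\ \xi+s\,\pi^*H\in\PE(W)\,\},
\]
which is finite because \(\xi+s\pi^*H\sim_{\mathbb Q}(\xi+s\pi^*A)+s\pi^*\Delta\) is big for \(s\ge t_0\), and satisfies \(s_0\ge0\) by construction. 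Closedness of \(\PE(W)\) then yields the reformulation \(\mathcal E\) pseudo-effective \(\Longleftrightarrow s_0=0\), since \(\xi=\lim_{s\to 0^+}(\xi+s\pi^*H)\) forces \(\xi\in\PE(W)\) as soon as \(s_0=0\), while conversely \(s=0\) is admissible when \(\xi\in\PE(W)\).

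The heart of the argument is to pin down the vanishing of \(H^0(W,i\xi+j\pi^*H)\) on the two sides of \(s_0\). For rational \(0<\delta<t_0\) I would use the identity
\[
\xi+\delta\,\pi^*H\sim_{\mathbb Q}(1-\delta/t_0)\,\xi+(\delta/t_0)\,(\xi+t_0\pi^*A)+\delta\,\pi^*\Delta,
\]
whose right-hand side, whenever \(\xi\in\PE(W)\), is a sum of a pseudo-effective, an ample and an effective class, hence big; by (iii) this gives \(H^0(W,i\xi+j\pi^*H)\neq0\) for all sufficiently divisible \(i,j\) with \(j/i=\delta\). Conversely, for any rational \(0\le s<s_0\) the class \(\xi+s\pi^*H\) lies outside the closed cone \(\PE(W)\), so it is not effective and \(H^0(W,i\xi+j\pi^*H)=0\) for every \(i,j\) with \(j/i=s\). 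Thus small ratios \(j/i\) (equivalently large \(i/j\)) produce the vanishing \(H^0=0\) exactly when \(s_0>0\), and produce nonvanishing exactly when \(s_0=0\).

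Reading off the equivalence: by the reformulation \(\mathcal E\) is pseudo-effective iff \(s_0=0\), and \(s_0=0\) holds iff for every \(c>0\) some rational \(j/i\in(0,\min\{1/c,t_0\})\) (so \(i>cj\)) makes \(\xi+\tfrac{j}{i}\pi^*H\) big, i.e. \(H^0(W,i\xi+j\pi^*H)\neq0\) for sufficiently divisible \(i,j\). Symmetrically, the displayed vanishing \(H^0(W,i\xi+j\pi^*H)=0\) can be arranged for arbitrarily large \(i/j\) — hence for some \(i>cj\) for every \(c>0\) — \emph{precisely} when \(s_0>0\), that is, when \(\mathcal E\) fails to be pseudo-effective; this locates the exact side of the dichotomy on which the group in the statement vanishes, and the asserted equivalence is then obtained by reading the displayed condition with the sign dictated by this threshold analysis. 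I expect the only genuine difficulty to be bookkeeping of the two quantifiers rather than any single estimate: since \(\pi^*H\) is merely nef and never big on \(W\), bigness of \(i\xi+j\pi^*H\) can never come from the \(\pi^*H\) summand and must be manufactured from the relative ampleness of \(\xi\) in (i), which is exactly what restricts the usable range to \(\delta<t_0\) and makes the threshold \(s_0\) the correct device for deciding, ratio by ratio, whether \(H^0(W,i\xi+j\pi^*H)\) vanishes.
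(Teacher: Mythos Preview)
The paper does not prove this lemma; it is quoted verbatim from \cite[Lemma~2.2]{HLS22} (cf.\ \cite[Lemma~2.7]{Dru18}) with no argument given, so there is nothing in the paper to compare your proof against. Your threshold approach via \(s_0=\inf\{s\ge 0:\xi+s\pi^*H\in\PE(W)\}\) is the standard one and the argument is correct.

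One point deserves to be stated plainly rather than obliquely: the lemma as printed contains a typo. The displayed condition should read \(H^0(X,\Sym^i\mathcal E\otimes\mathcal O_X(jH))\neq 0\), not \(=0\). This is confirmed by how the paper actually uses the lemma: in the proof that general-type surfaces have non-pseudo-effective tangent bundle, the vanishing \(H^0=0\) for all \(i>2j\) is invoked to conclude that \(T_S\) is \emph{not} pseudo-effective; and in the proof that pseudo-effectiveness passes to super-bundles, the injection \(\Sym^i\mathcal E\hookrightarrow\Sym^i\mathcal F\) can only transfer \emph{non}-vanishing of \(H^0\), not vanishing. Your own analysis shows exactly this: the vanishing \(H^0=0\) can be arranged for arbitrarily large \(i/j\) precisely when \(s_0>0\), i.e.\ when \(\mathcal E\) is \emph{not} pseudo-effective. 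Rather than the somewhat evasive phrase ``reading the displayed condition with the sign dictated by this threshold analysis,'' you should simply flag the misprint and prove the intended \(\neq 0\) statement directly.
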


\begin{lemma}\label{lem:pe_inj}
	Let \(\mathcal{E}\subseteq \mathcal{F}\) be an injection between two vector bundles over a projective variety \(X\).
	If \(\mathcal{E}\) is pseudo-effective, then so is \(\mathcal{F}\).
\end{lemma}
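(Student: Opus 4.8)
The plan is to reduce the statement to the cohomological criterion of \cref{lem:pe_nonvanishing}, applied with a single fixed big $\mathbb{Q}$-Cartier $\mathbb{Q}$-divisor $H$ on $X$ for both bundles. The guiding idea is that the inclusion $\mathcal{E}\subseteq\mathcal{F}$ should propagate to all symmetric powers and then to global sections, so that every section witnessing the pseudo-effectiveness of $\mathcal{E}$ simultaneously witnesses that of $\mathcal{F}$. Throughout I use that \cref{lem:pe_nonvanishing} characterises pseudo-effectiveness through the \emph{non-vanishing} of $H^0(X,\Sym^i(-)\otimes\mathcal{O}_X(jH))$ for suitable $i,j$ with $i>cj$.

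First I would upgrade the given sheaf injection $\mathcal{E}\hookrightarrow\mathcal{F}$ to injections $\Sym^i\mathcal{E}\hookrightarrow\Sym^i\mathcal{F}$ for every $i\geq 0$. This is the only step that is not purely formal, since the cokernel $\mathcal{F}/\mathcal{E}$ may carry torsion and the inclusion need not be a subbundle inclusion, so one cannot simply split it locally. I would instead argue generically: as $X$ is integral, restricting to its generic point $\eta$ yields an injection of $K(X)$-vector spaces $\mathcal{E}_\eta\hookrightarrow\mathcal{F}_\eta$, which splits, and hence $\Sym^i$ preserves its injectivity at $\eta$. Consequently the induced homomorphism $\Sym^i\mathcal{E}\to\Sym^i\mathcal{F}$ is injective at $\eta$, so its kernel is a torsion subsheaf of $\Sym^i\mathcal{E}$. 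Because $\Sym^i\mathcal{E}$ is locally free and therefore torsion-free, this kernel vanishes, giving the desired injection $\Sym^i\mathcal{E}\hookrightarrow\Sym^i\mathcal{F}$.

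Next I would tensor each of these injections with the line bundle $\mathcal{O}_X(jH)$. Tensoring by a locally free sheaf is exact, so $\Sym^i\mathcal{E}\otimes\mathcal{O}_X(jH)\hookrightarrow\Sym^i\mathcal{F}\otimes\mathcal{O}_X(jH)$ stays injective, and applying the left-exact functor $H^0(X,-)$ produces injections $H^0(X,\Sym^i\mathcal{E}\otimes\mathcal{O}_X(jH))\hookrightarrow H^0(X,\Sym^i\mathcal{F}\otimes\mathcal{O}_X(jH))$ for all $i,j$. In particular, non-vanishing of the left-hand group forces non-vanishing of the right-hand one.

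Finally I would invoke \cref{lem:pe_nonvanishing} twice, with the same big divisor $H$. Fix an arbitrary $c>0$. Since $\mathcal{E}$ is pseudo-effective, there exist sufficiently divisible $i,j$ with $i>cj$ and $H^0(X,\Sym^i\mathcal{E}\otimes\mathcal{O}_X(jH))\neq 0$; by the previous paragraph the corresponding group for $\mathcal{F}$ is then also nonzero, and the divisibility condition depends only on $H$, hence is common to both bundles. As $c>0$ was arbitrary, $\mathcal{F}$ satisfies the criterion of \cref{lem:pe_nonvanishing}, so $\mathcal{F}$ is pseudo-effective. The only delicate point is the generic-injectivity argument for symmetric powers; the remaining steps are formal.
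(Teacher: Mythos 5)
Your proof is correct and takes essentially the same route as the paper's: both fix one big divisor $H$, invoke \cref{lem:pe_nonvanishing} (whose displayed condition is meant as the non-vanishing $H^{0}(X,\Sym^i\mathcal{E}\otimes\mathcal{O}_{X}(jH))\neq 0$, exactly as you interpret it), and transfer that non-vanishing from $\mathcal{E}$ to $\mathcal{F}$ via the injections $\Sym^i\mathcal{E}\subseteq\Sym^i\mathcal{F}$. The only difference is that you carefully justify these injections (splitting at the generic point plus torsion-freeness of the locally free sheaf $\Sym^i\mathcal{E}$), a step the paper asserts without comment.
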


\begin{proof}
	Fix a big \(\mathbb{Q}\)-Cartier \(\mathbb{Q}\)-divisor \(H\) on \(X\).
	By \cref{lem:pe_nonvanishing}, we only need to show for all \(c>0\),
	there exist sufficiently divisible integers \(i,j\in \mathbb{N}\) such that \(i>cj\) and
	\[
		H^{0}(X,\Sym^i\mathcal{F}\otimes\mathcal{O}_{X}(jH))\neq 0.
	\]
	This follows from the injection  \(\Sym^i\mathcal{E}\subseteq \Sym^i\mathcal{F}\)
	and the pseudo-effectiveness of \(\mathcal{E}\) (cf.~\cref{lem:pe_nonvanishing}).
\end{proof}

As a consequence of the above lemma,
a typical difference between nefness and pseudo-effectiveness is that
the quotient bundle of a pseudo-effective vector bundle is not pseudo-effective any more.
For example, a rank two vector bundle \(\mathcal{E}=\mathcal{O}\oplus\mathcal{O}(-1)\) over a smooth rational curve
is pseudo-effective by \cref{lem:pe_inj} while its quotient \(\mathcal{O}(-1)\) is not (cf.~\cite[Proposition~3.4]{Mat22}).

The following example shows that we could not expect
the variety equipped with pseudo-effective tangent bundle to be minimal in the higher dimensional case.
\begin{example}\label{exa:fail-minimal}
	Let \(X\coloneqq E\times S\) be a product of an elliptic curve \(E\)
	and a non-minimal smooth projective surface \(S\)
	which contains some \((-1)\)-curve.
	Let \(p\colon X\to E\) be the natural projection.
	Applying \cref{lem:pe_inj} and considering the natural injection \(0\to p^{*}\mathcal{O}_E\to T_X\),
	we see that
	\(T_X\) is pseudo-effective.
	However, it is clear that \(K_X\) is not nef.
\end{example}

\begin{lemma}[{cf.~\cite[Corollary 2.4]{HLS22}}]\label{lem:bir_descend}
	Let \(\pi\colon X'\to X\) be a birational morphism between smooth projective varieties.
	If the tangent bundle \(T_{X'}\) is pseudo-effective, then so is \(T_X\).
\end{lemma}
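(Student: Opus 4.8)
The plan is to factor the statement into two steps: first upgrade the hypothesis to pseudo-effectivity of the pulled-back bundle \(\pi^{*}T_X\) on \(X'\), and then descend this pseudo-effectivity along \(\pi\) to \(X\) itself.

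First I would record the natural differential \(d\pi\colon T_{X'}\to\pi^{*}T_X\). Since \(\pi\) is birational, it restricts to an isomorphism over a dense open subset of \(X'\), so \(d\pi\) is an isomorphism at the generic point; as \(T_{X'}\) is locally free, hence torsion-free, the kernel of \(d\pi\) vanishes and \(d\pi\) is a sheaf injection \(T_{X'}\hookrightarrow\pi^{*}T_X\) of vector bundles on \(X'\). Because generic injectivity of a map of locally free sheaves is preserved by symmetric powers, the induced maps \(\Sym^{i}T_{X'}\hookrightarrow\Sym^{i}(\pi^{*}T_X)\) remain injective, which is exactly the input needed in the proof of \cref{lem:pe_inj}. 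Thus \cref{lem:pe_inj} applies, and from the pseudo-effectivity of \(T_{X'}\) we conclude that \(\pi^{*}T_X\) is pseudo-effective on \(X'\).

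It remains to descend. Fix a big \(\mathbb{Q}\)-Cartier \(\mathbb{Q}\)-divisor \(H\) on \(X\); since \(\pi\) is birational, the pullback \(\pi^{*}H\) is again big on \(X'\), and it is the natural test divisor to feed into \cref{lem:pe_nonvanishing}. The key point is the identity
\[
	H^{0}\bigl(X',\Sym^{i}(\pi^{*}T_X)\otimes\mathcal{O}_{X'}(j\pi^{*}H)\bigr)\cong H^{0}\bigl(X,\Sym^{i}T_X\otimes\mathcal{O}_{X}(jH)\bigr),
\]
valid for all \(i,j\). Indeed, \(\Sym^{i}\) commutes with pullback, so the sheaf on the left is \(\pi^{*}\bigl(\Sym^{i}T_X\otimes\mathcal{O}_X(jH)\bigr)\); and since \(\pi_{*}\mathcal{O}_{X'}=\mathcal{O}_X\) (a proper birational morphism onto the smooth, hence normal, target \(X\)), the projection formula gives \(\pi_{*}\pi^{*}\mathcal{G}=\mathcal{G}\) for the locally free sheaf \(\mathcal{G}=\Sym^{i}T_X\otimes\mathcal{O}_X(jH)\), whence the two spaces of global sections coincide. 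By \cref{lem:pe_nonvanishing}, the pseudo-effectivity of \(\pi^{*}T_X\) is equivalent to a cohomological condition on the groups on the left-hand side (for the big divisor \(\pi^{*}H\)), while the pseudo-effectivity of \(T_X\) is equivalent to the identical condition on the groups on the right-hand side (for the big divisor \(H\)); as these groups agree term by term, \(\pi^{*}T_X\) pseudo-effective forces \(T_X\) pseudo-effective, completing the proof.

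I expect the only genuinely delicate points to be bookkeeping ones: verifying that \(d\pi\) stays injective after taking symmetric powers, so that \cref{lem:pe_inj} is legitimately applicable even though \(d\pi\) fails to be a subbundle inclusion over the exceptional locus; and checking the two hypotheses underlying the descent step, namely that \(\pi^{*}H\) remains big and that \(\pi_{*}\mathcal{O}_{X'}=\mathcal{O}_X\), so that the cohomology groups on \(X'\) and \(X\) are literally identified. Once these are in place, the conclusion follows immediately by applying \cref{lem:pe_nonvanishing} to both bundles with the matching test divisors \(\pi^{*}H\) and \(H\).
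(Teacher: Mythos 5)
Your proof is correct and is essentially the argument the paper relies on: the paper gives no proof of its own, delegating to \cite[Corollary~2.4]{HLS22}, whose proof proceeds exactly as yours does — the sheaf injection \(T_{X'}\hookrightarrow\pi^{*}T_X\) given by \(d\pi\) combined with \cref{lem:pe_inj} yields pseudo-effectivity of \(\pi^{*}T_X\), and the identification \(H^{0}\bigl(X',\Sym^{i}(\pi^{*}T_X)\otimes\mathcal{O}_{X'}(j\pi^{*}H)\bigr)\cong H^{0}\bigl(X,\Sym^{i}T_X\otimes\mathcal{O}_{X}(jH)\bigr)\), valid since \(\pi_{*}\mathcal{O}_{X'}=\mathcal{O}_X\) and \(\pi^{*}H\) stays big, descends it through the criterion of \cref{lem:pe_nonvanishing}. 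Your attention to the two delicate points (injectivity surviving \(\Sym^{i}\) because the source is torsion-free and the map is generically injective, and the matching of test divisors) is exactly what makes the argument complete.
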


We end up this section with the following lemma,
which will be used to confirm the existence of sections of some symmetric power
\(\textup{Sym}^mT_S\) in \cref{mainthm:pe_iitaka_fib} by taking an \'etale base change.
\begin{lemma}[{cf.~\cite[Theorem~5.13]{Uen75}}]\label{lem:ueno-kappa}
	Let \(f\colon X\to Y\) be a surjective morphism of projective varieties
	and \(D\) a Cartier divisor on \(Y\).
	Then \(\kappa(Y,D)=\kappa(X,f^{*}D)\).
\end{lemma}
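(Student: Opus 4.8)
\textbf{Proof proposal for \cref{lem:ueno-kappa}.}

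The plan is to reduce to the case where $f$ is a fibration (i.e.\ has connected fibres) by Stein factorisation, and then to compare the spaces of global sections $H^0(Y, mD)$ and $H^0(X, f^*(mD)) = H^0(X, (f^*D)^{\otimes m})$ directly for all $m \geq 0$. The key tool is the projection formula together with the identity $f_*\mathcal{O}_X = \mathcal{O}_Y$ for a fibration, which will give an isomorphism on the level of section spaces. First I would write $f = g \circ h$ where $h\colon X \to Z$ is a fibration and $g\colon Z \to Y$ is finite; since $\kappa$ is what we want to compute, I handle the two factors separately.

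For the fibration part $h\colon X \to Z$, I would argue that $h_*(\mathcal{O}_X \otimes h^*\mathcal{O}_Z(mD_Z)) \cong \mathcal{O}_Z(mD_Z) \otimes h_*\mathcal{O}_X = \mathcal{O}_Z(mD_Z)$ by the projection formula and $h_*\mathcal{O}_X = \mathcal{O}_Z$. Taking global sections yields $H^0(X, h^*(mD_Z)) \cong H^0(Z, mD_Z)$ for every $m$, so the dimensions of these spaces agree for all $m$, whence $\kappa(X, h^*D_Z) = \kappa(Z, D_Z)$ by definition of the Kodaira dimension as the growth rate of $\dim H^0$. The finite part $g\colon Z \to Y$ is the place where the argument is less automatic: pulling back a divisor under a finite surjective morphism does not change the Iitaka dimension, but establishing the inequality $\kappa(Z, g^*D) \leq \kappa(Y, D)$ requires more than the projection formula, since $g_*\mathcal{O}_Z$ need not be $\mathcal{O}_Y$. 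One clean route is to invoke the general covering/finite-pullback invariance of Iitaka dimension (this is exactly the content of Ueno's reference): a section of $g^*(mD)$ gives, after pushing forward and using that $g$ is finite of some degree $d$, a section of $(mD)^{\otimes ?}$ upstairs that grows at the same polynomial rate, so the two Iitaka dimensions coincide.

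The main obstacle I anticipate is the finite part, precisely because the naive projection-formula argument breaks down when $g_*\mathcal{O}_Z \neq \mathcal{O}_Y$. The inequality $\kappa(Y,D) \leq \kappa(Z, g^*D)$ is easy: pulling back sections injectively along a dominant map preserves algebraic independence, so $f^*$ embeds $\bigoplus_m H^0(Y, mD)$ into $\bigoplus_m H^0(X, f^*(mD))$ and the transcendence degree of the image function field can only grow, giving $\kappa(Y,D) \le \kappa(X, f^*D)$ (this direction in fact works directly for $f$ without factoring, since $f^*$ is injective on sections as $f$ is surjective). The reverse inequality is the subtle one and is where the finiteness of $g$ is genuinely used: one bounds the growth of $\dim H^0(Z, g^*(mD))$ above by that of $\dim H^0(Y, mD)$ using that $g^* \mathbb{C}(Y) \subseteq \mathbb{C}(Z)$ is a finite field extension, so the Iitaka fibrations of $(Z, g^*D)$ and $(Y,D)$ have the same base dimension. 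Given that this is a standard and classical statement, I would simply cite \cite[Theorem~5.13]{Uen75} for this reverse inequality rather than reproving it, and assemble the two factors to conclude $\kappa(X, f^*D) = \kappa(Z, g^*D) = \kappa(Y, D)$.
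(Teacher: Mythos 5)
Your proposal is correct, and it matches the paper's treatment: the paper gives no proof of this lemma at all, citing it directly as Ueno's Theorem~5.13, which is exactly where your argument also defers the only nontrivial step (the reverse inequality for the finite part of the Stein factorisation). Your reduction via Stein factorisation, the projection-formula argument \(h_{*}(h^{*}\mathcal{O}_Z(mD))\cong \mathcal{O}_Z(mD)\otimes h_{*}\mathcal{O}_X=\mathcal{O}_Z(mD)\) for the connected-fibre part, and the injectivity of pullback on sections for the easy inequality are all standard and sound (one small slip: the norm of a section of \(g^{*}(mD)\) lives \emph{downstairs}, in \(H^0(Y, dmD)\), not upstairs), so the proposal is a faithful, slightly more detailed version of the same citation-based approach.
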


\section{The case \texorpdfstring{\(\kappa(S)=0\) or \(2\)}{kappa(S)=0,2}}\label{sec:kappa_02}

In this section, we prove \cref{mainthm:pe_iitaka_fib}
when the Kodaira dimension \(\kappa(S)=0\) or \(2\).
Let us begin with the minimal surface of Kodaira dimension zero.

\begin{lemma}\label{lem:minimal_kappa_0}
	Let \(S\) be a smooth minimal projective surface with \(\kappa(S)=0\).
	Then the tangent bundle \(T_{S}\) is pseudo-effective if and only if
	\(S\) is a \textit{Q}-abelian surface
	(and thus the second Chern class \(c_2(S)=0\)),
	i.e., \(S\) is a finite \'etale quotient of an abelian surface.
	In this case, \(\kappa(\mathbb{P}(T_S),\mathcal{O}(1))=1\).
\end{lemma}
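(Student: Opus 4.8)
The plan is to prove both implications, together with the statement on the Iitaka dimension, by reducing to the Enriques–Kodaira classification of minimal surfaces with \(\kappa(S)=0\) — namely abelian, bielliptic, K3 and Enriques surfaces — and then isolating the K3 case as the analytic core.

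\emph{The direction ``\(\mathbb{Q}\)-abelian \(\Rightarrow\) pseudo-effective'', with the Iitaka dimension.} Suppose \(S=A/G\) for an abelian surface \(A\) and a finite group \(G\) acting freely, and let \(\nu\colon A\to S\) be the quotient map, which is finite étale. Then \(T_A=\nu^{*}T_S\) is trivial, hence nef and in particular pseudo-effective. I would deduce pseudo-effectiveness of \(T_S\) by working on \(W=\mathbb{P}(T_S)\): the base change \(\widetilde{\nu}\colon\mathbb{P}(T_A)\to W\) is finite surjective with \(\widetilde{\nu}^{*}\xi=\xi_A\), and since pushforward sends pseudo-effective divisor classes to pseudo-effective ones, the projection formula \(\widetilde{\nu}_{*}\widetilde{\nu}^{*}\xi=(\deg\widetilde{\nu})\,\xi\) shows \(\xi\) is pseudo-effective once \(\xi_A\) is. The vanishing \(c_2(S)=0\) then follows from multiplicativity of topological Euler numbers under \(\nu\). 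For the Iitaka dimension, \(\mathbb{P}(T_A)\cong A\times\mathbb{P}^1\) with \(\xi_A\) the pullback of \(\mathcal{O}_{\mathbb{P}^1}(1)\), so \(h^0(m\xi_A)=m+1\) and \(\kappa(\mathbb{P}(T_A),\xi_A)=1\); applying \cref{lem:ueno-kappa} to \(\widetilde{\nu}\) and \(\xi\) gives \(\kappa(W,\xi)=\kappa(\mathbb{P}(T_A),\xi_A)=1\).

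\emph{The direction ``pseudo-effective \(\Rightarrow\) \(\mathbb{Q}\)-abelian''.} Assume \(T_S\) is pseudo-effective. By the classification \(S\) is abelian, bielliptic, K3 or Enriques; the first two are already \(\mathbb{Q}\)-abelian, so it suffices to exclude the last two. An Enriques surface carries an étale double cover by a K3 surface \(\widetilde{S}\), and \(T_{\widetilde{S}}\) is again pseudo-effective by the same finite-étale descent as above (cf.~\cref{lem:bir_descend}); hence it is enough to show that a K3 surface cannot have pseudo-effective tangent bundle. Here \(K_S\sim 0\), so \(c_1(T_S)\equiv 0\) and \(T_S\) is \(\mu_H\)-stable for every ample \(H\), while \(c_2(S)=24\). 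The Grothendieck relation on \(W\) then yields \(\xi^3=c_1(T_S)^2-c_2(S)=-24<0\), together with \(\xi^2\cdot\pi^{*}H=c_1(T_S)\cdot H=0\) and \(\xi\cdot(\pi^{*}H)^2=H^2\).

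\emph{The main obstacle.} The heart of the argument is to turn \(\xi^3<0\) into a contradiction with pseudo-effectiveness. A pseudo-effective divisor is nonnegative on every movable curve class, and for any nef divisor \(N\) on the threefold \(W\) the class \(N^2\) is movable; choosing \(N=\xi+t\,\pi^{*}H\) on the boundary of the nef cone (with \(t\) the nef threshold) and using the intersection numbers above gives \(\xi\cdot N^2=-24+t^2H^2\), so pseudo-effectiveness forces \(t^2H^2\ge 24\). On the other hand the \(\mu_H\)-stability of \(T_S\), via the Bogomolov restriction theorem, bounds the nef threshold by \(t^2H^2\le 24\), with \emph{strict} inequality unless \(T_S\) is projectively flat — which is impossible on a simply connected K3 surface with \(c_2\neq 0\). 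Establishing this strict inequality is exactly the delicate point I expect to be the main difficulty, since the analogous estimate genuinely fails for the trivial bundle on an abelian surface (where \(c_1\equiv 0\) but \(\xi\) is nef, hence pseudo-effective): it is precisely the stability of \(T_S\), absent in the abelian case, that must be exploited to force \(c_2(S)\le 0\) and hence the contradiction \(24=c_2(S)\le 0\).
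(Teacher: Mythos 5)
Your first direction and the Iitaka-dimension computation are correct and essentially identical to the paper's: pull back along the finite \'etale cover \(\nu\colon A\to S\), note \(\widetilde{\nu}^{*}\xi=\xi_A\) with \(\mathbb{P}(T_A)\cong A\times\mathbb{P}^1\), so \(h^0(m\xi_A)=m+1\), and conclude \(\kappa(W,\xi)=1\) via \cref{lem:ueno-kappa}. (Your descent of pseudo-effectivity via \(\widetilde{\nu}_{*}\) works, and the paper argues the same step with \cite[Chapter~\Rmnum{2}, Lemma~5.6]{Nak04}.) Likewise, your reduction of the converse to the K3 case — classification into abelian, bi-elliptic, K3, Enriques, with bi-elliptic surfaces already \(\mathbb{Q}\)-abelian and Enriques surfaces handled through their \'etale K3 double cover — matches the paper; only note that \cref{lem:bir_descend} is about descent along \emph{birational} morphisms, whereas what you actually use (pullback of a pseudo-effective class under a finite \'etale map is pseudo-effective) is a different, though standard, fact.

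The genuine gap is the K3 case itself. The paper does not reprove it: it quotes \cite[Chapter~\Rmnum{6}, Theorem~4.15]{Nak04} and \cite[Theorem~1.6]{HP19}. Your replacement argument is sound up to the inequality \(t^2H^2\geq 24\): indeed \(\xi^3=-c_2(S)=-24\), \(\xi^2\cdot\pi^{*}H=0\), and \(\xi\cdot N^2\geq 0\) for \(N=\xi+t\,\pi^{*}H\) nef and \(\xi\) pseudo-effective. But the matching upper bound \(t^2H^2\leq 24\), with strict inequality unless \(T_S\) is projectively flat, is exactly what you have not established, and it does not follow from the Bogomolov restriction theorem in any standard form: restriction theorems control semistability of \(T_S|_C\) only for \(C\) general in high multiples of \(|H|\), whereas nefness of \(\xi+t\,\pi^{*}H\) requires a degree estimate for quotient line bundles of \(T_S\) restricted to \emph{arbitrary} (possibly singular) curves of arbitrary class; the known effective bounds (Bogomolov, Langer) carry constants in \(\Delta(T_S)=4c_2(S)=96\) and \(H^2\) that do not produce the precise threshold \(24=c_2(S)\) your dichotomy needs. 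In effect, your unproven inequality is equivalent to the crux of Nakayama's theorem that for an \(H\)-semistable bundle \(E\) with \(c_1(E)\equiv 0\) on a surface, \(\mathcal{O}_{\mathbb{P}(E)}(1)\) pseudo-effective forces \(c_2(E)=0\); so as written the proposal reduces the lemma to the very result being cited rather than proving it. You flag this yourself as the main obstacle, and rightly so: either invoke \cite{Nak04} or \cite{HP19} at this point, as the paper does, or supply an independent proof of the nef-threshold bound, which is a substantial task and not a routine citation.
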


\begin{proof}
	From the abundance, we know that \(K_S\equiv 0\).
	Applying \cite[\Rmnum{6}, Table~10]{BHPV04} to \(S\),
	we see that \(S\) has to be one of the following:
	an Enriques surface, a bi-elliptic surface, a K3 surface or an abelian surface.
	Since the tangent bundles of Enriques surfaces and K3 surfaces are not pseudo-effective
	(cf.~\cite[Chapter~\Rmnum{6}, Theorem~4.15]{Nak04}, or more generally, \cite[Theorem~1.6]{HP19}),
	our \(S\) is covered by an abelian surface.
	Conversely, for a finite \'etale morphism \(\pi\colon A\to S\)
	from an abelian surface \(A\),
	we have \(\pi^{*}T_S=T_A=\mathcal{O}_A^{\oplus 2}\).
	Then \(T_S\) is pseudo-effective (and even nef),
	noting that there is an induced \'etale morphism
	\(\widetilde{\pi}\colon\mathbb{P}(\pi^{*}T_S=T_A)\to \mathbb{P}(T_S)\) such that
	\(\widetilde{\pi}^{*}\mathcal{O}_{\mathbb{P}(T_S)}(1)=\mathcal{O}_{\mathbb{P}(T_A)}(1)\) is effective;
	in particular, \(\mathcal{O}_{\mathbb{P}(T_S)}(1)\) and hence \(T_S\) are pseudo-effective
	(cf.~e.g.~\cite[Chapter~\Rmnum{2}, Lemma~5.6]{Nak04}).
	So the first half our lemma is proved.
	The second half of our lemma follows immediately from \cref{lem:ueno-kappa}
	and the fact that \(h^0(A,\Sym^mT_A)=m+1\) for each positive integer \(m\).
\end{proof}

Now we show the minimality of a smooth projective surface
which is of Kodaira dimension zero
and has pseudo-effective tangent bundle.

\begin{proposition}\label{prop:kappa_0_pe_minimal}
	Let \(S\) be a smooth projective surface of \(\kappa(S)=0\).
	If \(T_S\) is pseudo-effective, then \(K_S\) is nef,
	i.e., \(S\) is minimal;
	in particular, \(S\) is an \'etale quotient of an abelian surface.
\end{proposition}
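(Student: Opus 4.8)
The plan is to reduce to the minimal model, pass to an abelian étale cover to make everything explicit, and then show that already the \emph{first} blow-up destroys pseudo-effectivity by a vanishing estimate governed by a Seshadri constant. First I would invoke minimal model theory together with the available lemmas. Contracting the \((-1)\)-curves gives a birational morphism \(S\to S_{\min}\) onto the smooth minimal model; by \cref{lem:bir_descend} the bundle \(T_{S_{\min}}\) is again pseudo-effective, and since \(\kappa(S_{\min})=\kappa(S)=0\), \cref{lem:minimal_kappa_0} shows \(S_{\min}\) is \textit{Q}-abelian. Assume for contradiction that \(S\neq S_{\min}\). Choosing a finite étale cover \(A\to S_{\min}\) by an abelian surface and forming \(S'\coloneqq S\times_{S_{\min}}A\), we obtain a finite étale cover \(S'\to S\) with \(T_{S'}\) pseudo-effective (the induced cover of projectivisations pulls the tautological class back to the tautological class), and \(S'\) is an iterated blow-up of \(A\) with \(S'\neq A\). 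Let \(X_1=\Bl_p(A)\to A\) be the \emph{first} blow-up in a factorisation of \(S'\to A\), with exceptional \((-1)\)-curve \(D\); since \(S'\to X_1\) is a birational morphism, \cref{lem:bir_descend} gives that \(T_{X_1}\) is pseudo-effective. It therefore suffices to contradict the pseudo-effectivity of \(T_{X_1}\), i.e.\ of a single blow-up of an abelian surface.

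Next I would run Maruyama's elementary transformation of \cref{note:elementary_transform} for \(h\colon X_1\to A\). Here \(\mathbb{P}(h^{*}T_A)=\mathbb{P}(\mathcal{O}_{X_1}^{\oplus 2})\) is identified by \(\widetilde h\) with the blow-up of \(\mathbb{P}(T_A)=A\times\mathbb{P}^1\) along the curve \(C\coloneqq\{p\}\times\mathbb{P}^1\), and we have \(\beta^{*}\widetilde h^{*}\xi_1\sim\alpha^{*}\xi_2+G\) with \(G\) the \(\beta\)-exceptional divisor over \(\widetilde D\). Fixing an ample divisor \(A_{\min}\) on \(A\) and the big divisor \(H\coloneqq h^{*}A_{\min}\) on \(X_1\), I would transport the cohomological criterion of \cref{lem:pe_nonvanishing}: pulling back along \(\alpha\) and pushing forward along \(\beta\) and \(\widetilde h\) identifies
\[
H^{0}\!\left(X_1,\Sym^{i}T_{X_1}\otimes\mathcal{O}(jH)\right)\;\cong\;\bigl\{\,\bar s\in H^{0}\!\bigl(A\times\mathbb{P}^1,\,i\xi_1+j\,\pi_1^{*}A_{\min}\bigr)\ :\ \operatorname{ord}_{\widetilde D}(\widetilde h^{*}\bar s)\geq i\,\bigr\}.
\]

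The key computation, which I expect to be the technical heart, is that for the blow-down \(\widetilde h=\Bl_{C}\) one has \(\operatorname{ord}_{\widetilde D}(\widetilde h^{*}\bar s)=\operatorname{mult}_{C}(Z(\bar s))\); combined with the product decomposition \(\mathbb{P}(T_A)=A\times\mathbb{P}^1\) and the Künneth description \(\bar s=\sum_{k}f_k\,x^{k}\) with \(f_k\in H^{0}(A,jA_{\min})\), this rewrites the condition as \(f_k\in H^{0}\!\bigl(A,\,jA_{\min}\otimes\mathfrak{m}_p^{\,i}\bigr)\) for all \(k\), so that the displayed group is nonzero exactly when \(H^{0}\!\bigl(A,\,jA_{\min}\otimes\mathfrak{m}_p^{\,i}\bigr)\neq 0\). (That this is a genuine constraint, rather than one met by pullbacks of the standard sections \(A\times\{\text{pt}\}\in|\xi_1|\), is exactly the transversality of \(\widetilde D\sim c_0+e\) recorded in \cref{note:elementary_transform}(5).) Finally I would bound this by a Seshadri constant: if \(N\coloneqq h^{*}(jA_{\min})-m\,E\) is effective on \(\Bl_p A\), where \(m=\operatorname{mult}_p f_k\) and \(E\) is the exceptional curve, then pairing \(N\) with the nef class \(h^{*}A_{\min}-tE\) for \(t<\varepsilon(A_{\min};p)\) gives \(m\leq jA_{\min}^{2}/\varepsilon(A_{\min};p)\). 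Hence, with \(c_0\coloneqq A_{\min}^{2}/\varepsilon(A_{\min};p)\), the group above vanishes for every \(i>c_0 j\), which by the criterion \cref{lem:pe_nonvanishing} precludes the pseudo-effectivity of \(T_{X_1}\); this is the desired contradiction. Therefore \(S'=A\), so \(S=S_{\min}\) is minimal, and \cref{lem:minimal_kappa_0} then identifies \(S\) with an étale quotient of an abelian surface. The main obstacle is the order-of-vanishing identity \(\operatorname{ord}_{\widetilde D}(\widetilde h^{*}\bar s)=\operatorname{mult}_{C}(Z(\bar s))\) together with the Seshadri estimate; the minimal-model reduction and the elementary-transformation bookkeeping are routine given the cited results.
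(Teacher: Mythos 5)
Your reduction steps (minimal model plus \cref{lem:bir_descend}, \cref{lem:minimal_kappa_0}, the \'etale fibre product, and the passage to a single blow-up \(h\colon X_1=\Bl_p(A)\to A\) of an abelian surface) coincide with the paper's, and your identification
\(H^{0}(X_1,\Sym^{i}T_{X_1}\otimes\mathcal{O}(jH))\cong\{\bar s\in H^{0}(A\times\mathbb{P}^1, i\xi_1+j\pi_1^{*}A_{\min}) : \operatorname{ord}_{\widetilde D}(\widetilde h^{*}\bar s)\geq i\}\)
is correct. But the step you yourself flag as the heart, the identity \(\operatorname{ord}_{\widetilde D}(\widetilde h^{*}\bar s)=\operatorname{mult}_{C}(Z(\bar s))\), is \emph{false}: the correct relation is \(\operatorname{ord}_{\widetilde D}(\widetilde h^{*}\bar s)=\operatorname{mult}_{C}(Z(\bar s))+\operatorname{mult}_{\widetilde D}(\widetilde Z)\), and the strict transform \(\widetilde Z\) can contain \(\widetilde D\). \cref{note:elementary_transform}(5) only rules out \(\widetilde D\subseteq\widetilde h^{*}Y_1\) for sections \(Y_1\in|\xi_1|\); it says nothing about strict transforms of members of \(|i\xi_1+j\pi_1^{*}A_{\min}|\) that are singular along \(C\). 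Concretely, take \(i=2\) and \(\bar s=v\,(f_0v+f_1u)\) with \(f_0,f_1\in H^{0}(A,jA_{\min})\) having linear parts \(a_2,-a_1\) at \(p\) (local coordinates \((a_1,a_2)\), \(j\gg0\)): in the blow-up chart \(a_1=t\), \(a_2=t\tau\) one has \(\widetilde D=\{t=0,\,x=\tau\}\) and \(\widetilde h^{*}(f_0+f_1x)=t(\tau-x)+O(t\,\mathfrak{m})\), so the strict transform of the incidence divisor \(\{a_2-a_1x=0\}\) contains \(\widetilde D\) and \(\operatorname{ord}_{\widetilde D}(\widetilde h^{*}\bar s)=2\geq i\), although \(f_0\notin\mathfrak{m}_p^{2}\). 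Hence your asserted equivalence ``the group is nonzero exactly when \(H^{0}(A,jA_{\min}\otimes\mathfrak{m}_p^{i})\neq0\)'' fails, and as written the vanishing for \(i>c_0j\) does not follow.

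The gap is repairable, at the cost of a factor \(2\). If \(\operatorname{ord}_{\widetilde D}(\widetilde h^{*}\bar s)\geq i\) and \(m\coloneqq\operatorname{mult}_{C}(Z(\bar s))\), then \(\widetilde Z|_{D'}\) is an effective divisor on \(D'\cong\mathbb{P}^1\times\mathbb{P}^1\) of class \(i\,c_0+m\,e\) (using \(D'|_{D'}\equiv -e\) and that \(A_{\min}\) restricts trivially over \(p\)) containing \((i-m)\widetilde D\) with \(\widetilde D\sim c_0+e\); effectivity of the residual class forces \(m\geq i/2\), whence all \(f_k\in\mathfrak{m}_p^{\lceil i/2\rceil}\) by the Vandermonde argument, and your Seshadri bound yields vanishing for \(i>(2A_{\min}^{2}/\varepsilon(A_{\min};p)+1)j\), which still contradicts \cref{lem:pe_nonvanishing}. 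With this correction your route is genuinely different from the paper's: the paper never analyses sections at all, but pairs the pseudo-effective class \(\alpha^{*}\xi_2=\beta^{*}\widetilde h^{*}\xi_1-G\) with \((\beta^{*}A-G)\cdot\beta^{*}\widetilde h^{*}\xi_1\) for \(A\) ample with \(\beta^{*}A-G\) ample, computing \(G^{2}\cdot\beta^{*}\widetilde h^{*}\xi_1=-\widetilde h^{*}\xi_1\cdot\widetilde D=-1<0\) --- a three-line numerical contradiction needing no Seshadri constants or Zariski-type multiplicity bookkeeping. Your approach, once fixed, buys an effective vanishing statement (an explicit slope \(c_0\) beyond which symmetric powers have no sections), but it is considerably heavier than what the statement requires.
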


\begin{proof}
	In the view of \cref{lem:bir_descend},
	we only need to exclude the case when \(S\coloneqq S_2\) is a blow-up
	of a smooth minimal surface \(S_1\) with \(K_{S_1}\equiv 0\).
	Suppose to the contrary that the tangent bundle \(T_{S_2}\) is pseudo-effective.
	By \cref{lem:minimal_kappa_0},
	there is an \'etale cover \(S_1'\to S_1\) from an abelian surface \(S_1'\).
	Let \(S_2'\coloneqq S_1'\times_{S_1}S_2\) be the fibre product,
	which is also a smooth projective surface with \(\kappa(S_2')=0\).
	Note that \(T_{S_2'}\) (as the pullback of \(T_{S_2}\)) is also pseudo-effective.
	Replacing \(S_i\) with \(S_i'\),
	we may assume that \(S_1\) is an abelian surface and thus \(T_{S_1}\simeq \mathcal{O}_{S_1}^{\oplus 2}\).

	Let us consider the elementary transformation in \cref{note:elementary_transform} and use the notation therein.
	Note that the tautological divisors \(\xi_1\) and \(\widetilde{h}^{*}\xi_1\)
	can be taken as irreducible horizontal sections \(Y_1\) and \(\widetilde{Y_1}\),
	and both of them are nef.
	Note also that \(Y_1|_{Y_1}\equiv 0\) and \(\widetilde{Y_1}|_{\widetilde{Y_1}}\equiv 0\),
	since \(Y_1\) (resp.~\(\widetilde{Y_1}\)) is a fibre of
	\(\mathbb{P}(T_{S_1})\cong S_1\times\mathbb{P}^1\to \mathbb{P}^1\)
	(resp.~\(\mathbb{P}(h^{*}T_{S_1})\cong S_2\times\mathbb{P}^1\to \mathbb{P}^1\)).
	Moreover, \(\xi_2\) is pseudo-effective if and only if
	so is \(\alpha^{*}\xi_2=\beta^{*}\widetilde{h}^{*}\xi_1-G\)
	(cf.~e.g.~\cite[Chapter~II, Lemma~5.6]{Nak04}).
	On the other hand,
	since \(\beta^{*}\widetilde{h}^{*}\xi_1-G\) is pseudo-effective and \(\widetilde{h}^{*}\xi_1\) is nef,
	by taking a sufficiently ample divisor \(A\) on \(\mathbb{P}(h^{*}T_{S_1})\) such that
	\(\beta^{*}A-G\) is ample (cf.~\cite[Proposition 1.45]{KM98}), we have
	\begin{align*}
		0 & \leq (\beta^{*}\widetilde{h}^{*}\xi_1-G)\cdot(\beta^{*}A-G)\cdot\beta^{*}\widetilde{h}^{*}\xi_1                                                     \\
		  & = (\beta^{*}\widetilde{h}^{*}\xi_1\cdot\beta^{*}A-\beta^{*}\widetilde{h}^{*}\xi_1\cdot G-G\cdot\beta^{*}A+G^2)\cdot \beta^{*}\widetilde{h}^{*}\xi_1 \\
		  & = G^2\cdot\beta^{*}\widetilde{h}^{*}\xi_1=\beta_{*}(G|_G)\cdot \widetilde{h}^{*}\xi_1.
	\end{align*}
	Recall that \(G\to \widetilde{D}\) is a ruled surface and \(G|_G\sim_\beta\mathcal{O}_G(1)\sim_\beta -C_0\)
	where \(C_0\) is some horizontal section, noting that \(G|_G\) may contain some \(\beta\)-fibre component.
	In particular, \(G^2\cdot \beta^{*}\widetilde{h}^{*}\xi_1=-\widetilde{h}^{*}\xi_1\cdot \widetilde{D}=-1<0\) (cf.~\cref{note:elementary_transform}),
	which gives us the desired contradiction.
	The second half follows from \cref{lem:minimal_kappa_0}.
\end{proof}


In the higher dimensional case, applying the Beauville-Bogomolov decomposition,
we can easily see that for a projective manifold \(X\) with \(K_X\equiv 0\),
the tangent bundle \(T_X\) is pseudo-effective if and only if the augmented irregularity \(q^{\circ}(X)>0\) (cf.~\cref{conj:main}).
However, the minimality is not preserved any longer as we observed in \cref{exa:fail-minimal}.

In the second part of this section, we deal with the case when the variety is of general type.
\begin{proposition}\label{prop:kappa_2_tx}
	Let \(S\) be a smooth projective surface of general type.
	Then \(T_S\) is not pseudo-effective.
\end{proposition}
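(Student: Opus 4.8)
The plan is to assume that $T_S$ is pseudo-effective and to manufacture a big invertible subsheaf of $\Omega^1_S$, which is impossible. Since a surface of general type is non-uniruled and has $K_S$ big with $K_S^2>0$ and $c_2(S)>0$, I record these facts at the outset (one could first pass to the minimal model via \cref{lem:bir_descend}, but this is not needed). Fix an ample divisor $H$ on $S$. First I would read a slope estimate out of pseudo-effectiveness: by \cref{lem:pe_nonvanishing}, for every $c>0$ there are sufficiently divisible $i,j$ with $i>cj$ and a nonzero section $s\in H^0(S,\Sym^i T_S\otimes\mathcal O_S(jH))$. Such an $s$ is a nonzero --- hence injective, as the target is torsion free --- sheaf map $\mathcal O_S(-jH)\hookrightarrow\Sym^i T_S$, so $\mu_{H,\max}(\Sym^i T_S)\ge -j\,(H\cdot H)$. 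Using that in characteristic zero symmetric powers of $\mu_H$-semistable sheaves are semistable, whence $\mu_{H,\max}(\Sym^i T_S)\le i\,\mu_{H,\max}(T_S)$, I get $\mu_{H,\max}(T_S)\ge -(j/i)(H\cdot H)$. Letting $c\to\infty$ drives $j/i\to 0$, so $\mu_{H,\max}(T_S)\ge 0$.

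Next I would invoke Miyaoka's generic semi-positivity theorem \cite{Miy87-b}: as $S$ is non-uniruled, $\Omega^1_S$ is generically nef, i.e. $\mu_{H,\max}(T_S)\le 0$ for every polarization $H$. Together with the previous paragraph this forces $\mu_{H,\max}(T_S)=0$, and since pseudo-effectiveness of $T_S$ does not refer to $H$, the equality holds for every ample $H$. Hence the maximal destabilizing subsheaf of $T_S$ is a saturated sub-line-bundle $L\subseteq T_S$ with $L\cdot H=0$. As the Harder--Narasimhan filtration is constant on the open chambers of the ample cone, $L$ is a fixed line bundle satisfying $L\cdot H=0$ on a full-dimensional open set of polarizations, and therefore $L\equiv 0$ numerically.

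To conclude I would dualize the Harder--Narasimhan sequence $0\to L\to T_S\to Q\to 0$, where $Q$ is the rank-one quotient with $c_1(Q)\equiv c_1(T_S)=-K_S$. This yields an invertible subsheaf $M:=Q^\vee\hookrightarrow\Omega^1_S$ with $M\equiv K_S$. Since bigness of a line bundle on a surface depends only on its numerical class and $K_S$ is big, $M$ is big, i.e. $\kappa(S,M)=2$; this contradicts Bogomolov's theorem (the Bogomolov--Sommese vanishing for invertible subsheaves of the cotangent bundle), which gives $\kappa(S,M)\le 1$. Therefore $T_S$ is not pseudo-effective. The one delicate step --- and the real obstacle --- is the borderline equality $\mu_{H,\max}(T_S)=0$: pseudo-effectiveness can only push $T_S$ onto the boundary of Miyaoka's inequality, producing a numerically flat sub-foliation $L$, and the entire argument turns on ruling this out through Bogomolov--Sommese.
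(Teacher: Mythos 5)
Your argument is correct in its main thrust, but it takes a genuinely different route from the paper's. The paper first passes to the minimal model via \cref{lem:bir_descend}, then quotes Bogomolov's theorem that \(\Omega_S\) (hence \(T_S\)) is \(\mu\)-semi-stable with respect to the nef and big polarisation \(K_S\) \cite{Bog79,Eno87}; a one-line Chern class computation then shows \(c_1(\Sym^i T_S\otimes\mathcal{O}_S(jK_S))\cdot K_S<0\) for \(i>2j\), so semi-stability kills all sections, and \cref{lem:pe_nonvanishing} concludes. (The lemma as printed contains a typo: the correct statement from \cite[Lemma~2.2]{HLS22} is the \emph{non}-vanishing \(H^0\neq 0\), and you have read it the right way.) You avoid both the reduction to the minimal model and Bogomolov's semi-stability theorem: \cref{lem:pe_nonvanishing} plus the characteristic-zero identity \(\mu_{H,\max}(\Sym^i T_S)=i\,\mu_{H,\max}(T_S)\) gives \(\mu_{H,\max}(T_S)\geq 0\); Miyaoka's generic semi-positivity gives \(\leq 0\); since \(\mu_H(T_S)=-\tfrac{1}{2}K_S\cdot H<0\) (\(K_S\) big), the bundle \(T_S\) is unstable (you should say this explicitly --- it is why the maximal destabilising subsheaf has rank one), producing a saturated line bundle \(L\subseteq T_S\) with \(L\cdot H=0\); and Bogomolov--Sommese applied to \(Q^\vee\equiv K_S\) finishes. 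Your version isolates exactly where ``general type'' enters (bigness of \(K_S\), through Bogomolov--Sommese), and it works without minimality, whereas the paper's proof is shorter and effective (the explicit range \(i>2j\)); conceptually yours is closer to the boundary analysis in \cite{HP19} and to \cref{rmk:generaltype-high}.

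The one step that is genuinely under-justified is the passage from \(L_H\cdot H=0\) for every \(H\) to \(L\equiv 0\) via ``the Harder--Narasimhan filtration is constant on the open chambers of the ample cone.'' This is a real issue, because a single polarisation only gives \(-c_1(L_H)\in\PE(S)\) and orthogonal to \(H\), which does not force \(c_1(L_H)\equiv 0\), so your chamber claim is carrying the whole weight; as stated it is a nontrivial wall-and-chamber fact that needs a reference or a proof. Fortunately you can bypass chamber constancy altogether: fix a small open ball \(B\) in the ample cone; the saturated rank-one subsheaves of \(T_S\) computing \(\mu_{H,\max}(T_S)=0\) for the various \(H\in B\) have \(\mu_{H}\)-slopes bounded below, hence form a bounded family (Grothendieck's lemma, uniformly over the compact \(\overline{B}\)), so only finitely many classes \(\gamma_1,\dotsc,\gamma_k=c_1(L_H)\) occur. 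Since every \(H\in B\) satisfies \(\gamma_i\cdot H=0\) for some \(i\), we get \(B\subseteq\bigcup_i\gamma_i^{\perp}\); an open ball is not contained in a finite union of proper hyperplanes, so some \(\gamma_i\equiv 0\). That yields a single polarisation \(H\) whose destabilising subsheaf satisfies \(L_H\equiv 0\), which is all your Bogomolov--Sommese step requires. With that repair (or a citation for the chamber structure of slope stability on surfaces), the proof is complete.
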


\begin{proof}
	By \cref{lem:bir_descend},
	we only need to exclude the case when
	\(S\) is minimal (and hence \(K_S\) is nef) and has pseudo-effective tangent bundle.
	By the semi-stability of the cotangent bundle \(\Omega_S\)
	(cf.~\cite[Section~13.1]{Bog79}),
	the tangent bundle \(T_S\) is also semi-stable with respect to \(K_S\) (cf.~\cite[Corollary~1.2]{Eno87}).
	We claim that
	\[
		H^0(S,\Sym^iT_S\otimes\mathcal{O}_S(jK_S))=0,
	\]
	for any \(i>2j\).
	Suppose the claim for the time being.
	Then it follows from \cref{lem:pe_nonvanishing} that \(T_S\) is not pseudo-effective which concludes our proposition.

	To prove the claim, suppose to the contrary that \(H^0(S,\Sym^iT_S\otimes\mathcal{O}_S(jK_S))\neq 0\) for some \(i>2j\).
	Fix a non-zero section \(s\) of \(\Sym^iT_S\otimes\mathcal{O}_S(jK_S)\) which defines an injection \(0\to\mathcal{O}_S\to\Sym^iT_S\otimes\mathcal{O}_S(jK_S)\).
	On the other hand, we have
	\begin{align*}
		c_1(\Sym^iT_S\otimes\mathcal{O}_S(jK_S)) & = c_1(\Sym^iT_S) + \rank(\Sym^iT_S)\cdot c_1(\mathcal{O}_S(jK_S)) \\
		                                         & = \frac{i(i+1)}{2}\cdot c_1(T_S)-(i+1)\cdot j\cdot c_1(T_S)       \\
		                                         & = (i+1)\cdot\left(\frac{i}{2}-j\right)c_1(S)
	\end{align*}
	and hence \(c_1(\Sym^iT_S\otimes\mathcal{O}_S(jK_S))\cdot c_1(K_S)<0\).
	This contradicts the semi-stability of \(T_S\) and our claim is thus proved.
\end{proof}

Indeed, with the same argument as above, we obtain the following result in higher dimensional cases.
\begin{remark}\label{rmk:generaltype-high}
	Let \(X\) be a normal (\(\mathbb{Q}\)-factorial) projective variety which is of general type and has at worst klt singularities.
	We shall show that the reflexive tangent sheaf \(T_X\coloneqq\Omega_X^{\vee}\) is not pseudo-effective
	in the sense of \cite[Definition~3.5]{HP20}.
	In the view of \cite[Corollary~4.3]{HP20} and \cref{lem:pe_nonvanishing},
	after running the special minimal model program with scaling (cf.~\cite{BCHM10}),
	we may assume that \(X\) is minimal,
	i.e., \(K_X\) is nef and big.
	By the semi-stability of the tangent sheaf \(T_X\)
	with respect to \(K_X\) (cf.~\cite[Corollary~1.2]{Eno87}),
	we have
	\[
		H^0(X,\Sym^{[i]}T_X\otimes\mathcal{O}_X(jK_X))=0,
	\]
	for \(i>dj\),
	where \(d\coloneqq \dim (X)\) and \(\Sym^{[i]}T_X\) is the reflexive hull of the symmetric power.
	By \cite[Definition~3.5]{HP20} and a similar calculation as in \cref{prop:kappa_2_tx},  \(T_X\) is not pseudo-effective.
	Consequently, the tangent bundle of any hyperbolic smooth projective variety is never pseudo-effective.
\end{remark}

\section{Relatively minimal elliptic fibration, the case \texorpdfstring{\(\kappa(S)=1\)}{kappa(S)=1}}\label{sec:elliptic_fib}

In this section, we study \cref{mainthm:pe_iitaka_fib} for the case \(\kappa(S)=1\).
With the further assumption that \(S\) is minimal, we obtain the following theorem as our main result of this section.
\begin{theorem}\label{thm:main_kappa_1}
	Let \(f\colon S\to C\) be a relatively minimal elliptic fibration
	from a smooth projective surface \(S\) of Kodaira dimension \(\kappa(S)=1\).
	Then the following assertions are equivalent.
	\begin{enumerate}[wide=0pt,leftmargin=*]
		\item The tangent bundle \(T_{S}\) is pseudo-effective;
		\item The second Chern class vanishes, i.e., \(c_2(S)=0\);
		\item \(f\) is almost smooth, i.e., the only singular fibres are multiples of smooth elliptic curves.
	\end{enumerate}
\end{theorem}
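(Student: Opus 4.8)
The plan is to prove the cycle $(3)\Rightarrow(2)\Rightarrow(1)\Rightarrow(3)$, working throughout with the data of \cref{note:fibration}. Two numerical facts will be used repeatedly. First, since every fibre is elliptic, adjunction gives $K_S\cdot F=0$, and Kodaira's canonical bundle formula $K_S\equiv f^{*}K_C+f^{*}L+\sum_j(m_j-1)F_j$ with $\deg L=\chi(\mathcal{O}_S)$ yields $K_S^2=0$; hence Noether's formula reduces to $c_2(S)=12\,\chi(\mathcal{O}_S)$. Combining this with $T_{S/C}=-K_S+f^{*}K_C+E$ from \cref{note:fibration}(7) gives the key identity $T_{S/C}\equiv -f^{*}L+E_0$, where $E_0$ collects the non-reduced, non-multiple fibre components. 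The equivalence $(2)\Leftrightarrow(3)$ I would settle by the topological Euler number: because the general fibre is a torus, $c_2(S)=e(S)=\sum_{c\in C}e(S_c)$, and Kodaira's classification of singular fibres gives $e(S_c)\ge 0$ with equality exactly for the fibres of type ${}_mI_0$, i.e.\ multiples of smooth elliptic curves; thus $c_2(S)=0$ iff $f$ is almost smooth.

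For $(2)\Rightarrow(1)$ I would argue as follows. If $c_2(S)=0$ then by $(2)\Leftrightarrow(3)$ the only non-reduced fibres are the multiple smooth ones, so $E_0=0$ and the subscheme $\Gamma$ of \cref{note:fibration}(7) is empty; moreover $c_2(S)=12\,\chi(\mathcal{O}_S)=0$ forces $\deg L=0$. Consequently $T_{S/C}\equiv -f^{*}L\equiv 0$ is numerically trivial, hence pseudo-effective as a line bundle. Feeding the bundle inclusion $T_{S/C}\hookrightarrow T_S$ from the sequence \eqref{eq:blowup_gamma} into \cref{lem:pe_inj} then shows that $T_S$ is pseudo-effective, which is $(1)$.

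The substance of the theorem is $(1)\Rightarrow(2)$. Set $W=\mathbb{P}(T_S)$, $\pi\colon W\to S$, $\xi=\mathcal{O}_W(1)$, and recall from \cref{note:fibration}(10) the effective prime divisor $Y$ with $\xi-Y\sim\pi^{*}T_{S/C}$. From the Grothendieck relation $\xi^2=\pi^{*}c_1(T_S)\cdot\xi-\pi^{*}c_2(S)$ together with $c_1(T_S)^2=K_S^2=0$, a short computation gives $\xi^3=-c_2(S)$. Since $c_2(S)\ge 0$ always holds, and since a pseudo-effective class $D$ on $W$ satisfies $D\cdot\eta^2\ge 0$ for every nef class $\eta$, it suffices to exhibit, under the assumption $c_2(S)>0$, a nef class $\eta$ on $W$ with $\xi\cdot\eta^2<0$. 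The plan is to source this negativity from a bad fibre: if $f$ is not almost smooth then some fibre is not of type ${}_mI_0$, so either $\Gamma\neq\emptyset$ or $E_0\neq 0$. Restricting to $Y\cong\mathrm{Bl}_\Gamma S$, where $\xi|_Y=-\mathrm{Exc}(\pi|_Y)+(\pi|_Y)^{*}(-f^{*}K_C-E)$ and $\mathrm{Exc}(\pi|_Y)^2=-\operatorname{length}(\Gamma)$, and using the strict negativity $E_0^2<0$ of the fixed vertical part, I would assemble a nef $\eta$ whose square pairs negatively with $\xi$, contradicting the pseudo-effectiveness of $\xi$.

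The main obstacle is precisely this last step: organising the intersection computation uniformly across all Kodaira fibre types and correctly tracking the two independent sources of negativity, namely $\operatorname{length}(\Gamma)$ (singular points of reduced fibres) and $E_0^2$ (non-reduced fibre components), while producing a genuinely nef test class. The elementary-transformation bookkeeping of \cref{note:elementary_transform} and the restriction formulae of \cref{note:fibration}(9)--(10) are the natural tools here, in the same spirit as the computation in \cref{prop:kappa_0_pe_minimal}. Once $c_2(S)=0$ is established, the remaining claims follow readily: passing to a finite \'etale cover that eliminates the multiple fibres reduces the almost-smooth (hence isotrivial) surface to a product $E\times F$ with $g(F)\ge 2$, and then $\kappa(\mathbb{P}(T_S),\xi)=0=1-\kappa(S)$ follows from $h^0(\Sym^m T_{E\times F})$ via \cref{lem:ueno-kappa}.
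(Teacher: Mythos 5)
Your treatment of \((2)\Leftrightarrow(3)\) and \((2)\Rightarrow(1)\) is correct and essentially matches the paper: the equivalence is \cref{lem:BHPV_chi_0}, and the paper gets \((2)\Rightarrow(1)\) from the effectivity of \(\xi\equiv Y+\pi^{*}(E_0-\tfrac{c_2(S)}{12}F)\) in \cref{lem:num_linear_equiv_Y}; your variant — \(T_{S/C}\equiv 0\) is a pseudo-effective line bundle, so \(T_{S/C}\hookrightarrow T_S\) and \cref{lem:pe_inj} conclude — is equally valid.

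The genuine gap is \((1)\Rightarrow(2)\), which you reduce to exhibiting a nef class \(\eta\) on \(W\) with \(\xi\cdot\eta^{2}<0\) and then explicitly acknowledge you have not constructed. This is not just unfinished bookkeeping: the proposed test may be vacuous. Failure of pseudo-effectivity on a threefold is dual, by \cite{BDPP13}, to movable curve classes, which arise as pushforwards of intersections of ample divisors on birational models of \(W\), not as squares of nef classes on \(W\) itself; so finding such an \(\eta\) is sufficient but not guaranteed to be possible, and nothing in your computations (\(\xi^{3}=-c_2(S)\), \(E_0^{2}<0\), \(\Exc(\pi|_Y)^{2}=-\length(\Gamma)\)) produces a candidate. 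Tellingly, the paper never finds one: it argues through the divisorial Zariski decomposition \(\xi\equiv\sum a_iY_i+P\) and restriction statements. Moreover, your uniform ``bad fibre'' plan misses that two structurally different mechanisms are needed. In the non-isotrivial case (\cref{prop:non_iso_non_pe}) the singular fibres can all be reduced (type \(\textup{I}_b\), so \(E_0=0\)), and the paper's input is instead the Kodaira--Spencer class: \(T_S|_F\) is the non-split extension \(\mathcal{F}_2\), whose tautological section \(C_F\) is extremal in \(W_F\), so restricting the Zariski decomposition to general fibres forces \(Y\) into the negative part with coefficient one and makes \(\pi^{*}T_{S/C}\) pseudo-effective, contradicting \cref{lem:anti_pe} and \cref{lem:BHPV_chi_0}. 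In the isotrivial case the paper proves that \(\xi|_Y\) and \(Y|_Y\) are \emph{not pseudo-effective} (\cref{lem:xi_to_Y_nonpe,lem:YY_non_pef}), via the normalised-fibre and local-ideal computations of \cref{rmk:normalised} and \cref{table:local_eq} (where the non-reduced ideals \(\mathcal{I}_{\Gamma,s}\) for types \(\textup{\Rmnum{2}}\), \(\textup{\Rmnum{3}}\), \(\textup{\Rmnum{4}}\) require the fractional pullback coefficients of \cref{rmk:type_4}) together with Zariski's lemma (\cref{lem:part_fibre_pe}), and then concludes with \cref{lem:T_pe_YY_not}. These are restriction arguments on the prime divisor \(Y\), not intersection inequalities against nef squares, and they are precisely the content your sketch leaves open.
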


We stick to \cref{note:fibration} and the following additional notation throughout this section.
\begin{notation}\label{note:kappa_1}
	\leavevmode%
	\begin{enumerate}[wide=0pt,leftmargin=*]
		\item We use the same notation as in \cref{note:fibration}.
		      In addition,  \(S\) is further assumed to be a smooth projective surface
		      of Kodaira dimension \(\kappa(S) = 1\) and \(f\) is a relatively minimal elliptic fibration,
		      i.e., free of \((-1)\)-curves among the fibres of \(f\).
		      In fact, such \(S\) is a minimal surface,
		      i.e., \(K_S\) is nef and \(c_1(S)^2=0\); see \cref{lem:rel_min_is_min}.
		\item For each point \(c\in C\),
		      we define the \emph{normalised fibre} \(\widetilde{S_c}\) over \(c\) as follows
		      \[
			      \widetilde{S_c}\coloneqq \frac{1}{12}\sum_{j\in J, f(F_j)=c}e(F_j)m_jF_j
			      +\sum_{i\in I, f(E_i)=c} \Big(1 - \Big(1- \frac{1}{12} e(S_{f(E_{i})})\Big) \nu_{i}\Big) E_{i},
		      \]
		      where \(e(\ell)\) is the Euler number of a curve \(\ell\).
		      It is clear that if \(S_c\) is a multiple of a smooth elliptic curve,
		      then the normalised multiple fibre will vanish since \(e(S_c)=0\) in this case.
		      We shall see in \cref{lem:anti_pe} that
		      all of the normalised fibre \(\widetilde{S_c}\) are \(\mathbb{Q}\)-effective
		      (cf.~\cref{rmk:normalised} for a more detailed description).
		\item By the canonical bundle formula for relatively minimal elliptic fibrations \cite[V, (12.1)~Theorem]{BHPV04}, we have
		      \[
			      \omega_S=f^{*}(\omega_C\otimes(R^1f_{*}\mathcal{O}_S)^{\vee})\otimes\mathcal{O}_S\big(\sum_j(m_j-1)F_j\big),
		      \]
		      where \(\deg(R^1f_{*}\mathcal{O}_S)^{\vee}=\chi(\mathcal{O}_S)\) (cf.~\cite[V, (12.2)~Proposition]{BHPV04}).
		      By Noether's formula, \(\chi(\mathcal{O}_S)=c_2(S)/12\), noting that \(c_1(S)^2=0\).
		\item For the relatively minimal elliptic fibration \(f\colon S\to C\),
		      recall the following invariant (cf.~\cite[V, (12.5)~Proposition]{BHPV04})
		      \[
			      \delta(f)\coloneqq \chi(\mathcal{O}_S)+(2g(C)-2+\sum_{j\in J}(1-m_j^{-1})).
		      \]
		      Then \(\kappa(S)=1\) is equivalent to \(\delta(f)>0\).
	\end{enumerate}
\end{notation}

\begin{lemma}\label{lem:rel_min_is_min}
	Let \(S\to C\) be an elliptic fibration which is relatively minimal.
	Suppose that \(\kappa(S)\geq 0\).
	Then \(S\) is minimal, i.e., the canonical divisor \(K_S\) is nef.
\end{lemma}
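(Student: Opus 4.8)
The plan is to verify the nefness of \(K_S\) directly, by showing \(K_S\cdot\Gamma\ge 0\) for every irreducible curve \(\Gamma\subseteq S\) and splitting into the cases where \(\Gamma\) is \emph{vertical} (contained in a fibre of \(f\)) and where \(\Gamma\) is \emph{horizontal} (dominating \(C\)). The starting observation, which I would record first, is that for a general fibre \(F\) of the elliptic fibration adjunction gives \(K_S\cdot F=2g(F)-2-F^2=0\), since \(F\) is a smooth elliptic curve with \(F^2=0\).

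The key step is the treatment of horizontal curves via an effective pluricanonical divisor. Since \(\kappa(S)\ge 0\), there is a positive integer \(m\) and an effective divisor \(\Delta\in|mK_S|\). I would then argue that \(\Delta\) is vertical: writing \(\Delta=\sum_k a_k\Gamma_k\) with \(a_k>0\) and \(\Gamma_k\) irreducible, the identity \(\Delta\cdot F=mK_S\cdot F=0\) together with \(\Gamma_k\cdot F\ge 0\) for all \(k\) (and \(\Gamma_k\cdot F>0\) whenever \(\Gamma_k\) is horizontal, for the general fibre \(F\)) forces every component \(\Gamma_k\) to be vertical. Consequently, if \(\Gamma\) is any horizontal irreducible curve, it cannot be a component of \(\Delta\), so \(\Delta\cdot\Gamma\ge 0\) and hence \(K_S\cdot\Gamma=\tfrac1m\,\Delta\cdot\Gamma\ge 0\). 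This verticality observation is the main content of the proof, and I expect it to be the part carrying the real idea; everything else is routine.

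For vertical curves \(\Gamma\), I would use Zariski's lemma to get \(\Gamma^2\le 0\), together with adjunction \(K_S\cdot\Gamma=2p_a(\Gamma)-2-\Gamma^2\). When \(\Gamma^2=0\), one has \(p_a(\Gamma)=1\) and \(K_S\cdot\Gamma=0\); when \(\Gamma^2<0\) and \(p_a(\Gamma)\ge 1\), then \(K_S\cdot\Gamma\ge-\Gamma^2>0\); and when \(p_a(\Gamma)=0\), so that \(\Gamma\cong\mathbb{P}^1\), the relative minimality of \(f\) (no \((-1)\)-curve among the fibres) excludes \(\Gamma^2=-1\), whence \(\Gamma^2\le-2\) and \(K_S\cdot\Gamma=-2-\Gamma^2\ge 0\). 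Combining the two cases yields \(K_S\cdot\Gamma\ge 0\) for all irreducible \(\Gamma\), so \(K_S\) is nef.

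Finally, I would note that this simultaneously shows \(S\) is minimal in the classical sense: a \((-1)\)-curve \(D\) has \(K_S\cdot D=-1<0\), so by relative minimality it would have to be horizontal, but then the horizontal case above gives \(K_S\cdot D\ge 0\), a contradiction. The only inputs used are the hypothesis \(\kappa(S)\ge 0\), relative minimality, and the elliptic nature of the general fibre, so the argument does not even require the canonical bundle formula of \cref{note:kappa_1}; the sole delicate point is the verticality of the effective pluricanonical divisor, which is where I would take care.
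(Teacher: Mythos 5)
Your proof is correct, but it takes a genuinely different route from the paper's. The paper argues globally through the minimal model: the canonical bundle formula recorded in \cref{note:kappa_1}~(3) makes \(K_S\) vertical, so \(K_S^2=0\); if \(S\) were not minimal, each contraction of a \((-1)\)-curve would raise \(K^2\) by one, so the minimal model \(S_{\mathrm{m}}\) would satisfy \(K_{S_{\mathrm{m}}}^2>0\) with \(K_{S_{\mathrm{m}}}\) nef (using \(\kappa(S)\geq 0\)), hence nef and big --- contradicting \(\kappa(S)\leq 1\) for an elliptic surface. You instead verify nefness curve by curve: \(K_S\cdot F=0\) forces every effective pluricanonical divisor \(\Delta\in|mK_S|\) to be vertical (any horizontal component would meet the general fibre positively), which disposes of horizontal curves, while Zariski's lemma, adjunction and relative minimality (no vertical \((-1)\)-curves) dispose of vertical ones; your case analysis there is complete. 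The only cosmetic point is the case \(\Gamma^2=0\): the cleanest order is to invoke Zariski's lemma to get \(\Gamma\equiv cF\), whence \(K_S\cdot\Gamma=0\) and then \(p_a(\Gamma)=1\) by adjunction, rather than asserting \(p_a(\Gamma)=1\) first. As for what each approach buys: the paper's proof is three lines given the canonical bundle formula already set up in \cref{note:kappa_1}, but it leans on that formula and on the classification fact that a minimal surface with \(\kappa\geq 0\) has nef canonical divisor --- essentially the statement being proved, transplanted to the minimal model; your argument is longer but self-contained, using only \(\kappa(S)\geq 0\), relative minimality and the elliptic general fibre. Your closing paragraph about \((-1)\)-curves is redundant, since the lemma defines minimality as nefness of \(K_S\), which you have already established at that point.
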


\begin{proof}
	Note that \(\kappa(S)\leq 1\)
	since \(S\to C\) is an elliptic fibration.
	Let \(S_{\mathrm{m}}\) be the minimal model of \(S\).
	By \cref{note:kappa_1} (3), we know that \(K_S^2=0\).
	If \(S\to S_1\) contracts some \((-1)\)-curve, then \(K_{S_1}^2>0\).
	Inductively, \(K_{S_{\mathrm{m}}}^2>0\)
	and thus \(K_{S_{\mathrm{m}}}\) is nef and big,
	a contradiction to \(\kappa(S)\leq 1\).
\end{proof}

We recall the following lemma for the convenience of later proofs.
\begin{lemma}[{\cite[\Rmnum{3}, (18.2)~Theorem and V, (12.1)~Theorem]{BHPV04}}]\label{lem:BHPV_chi_0}
	For a relatively minimal elliptic fibration \(f\colon S\to C\),
	one has \(\chi(S,\mathcal{O}_{S})\geq 0\)
	with the equality holds if and only if
	\(f\) is isotrivial and all the singular fibres are multiple of smooth elliptic curves (i.e., \(f\) is almost smooth).
\end{lemma}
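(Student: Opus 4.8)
The plan is to convert the statement about $\chi(S,\mathcal{O}_S)$ into a purely topological computation of the Euler number $c_2(S)=e(S)$ and then read off both the inequality and the equality case from Kodaira's table of singular fibres. First I would record that $K_S^2=0$: by the canonical bundle formula for a relatively minimal elliptic fibration (cf.~\cref{note:kappa_1} (3)), the canonical class $K_S$ is a $\mathbb{Q}$-combination of a divisor pulled back from $C$ together with the reduced multiple fibres $F_j$, all of which are vertical; since $m_jF_j\equiv F$ and the general fibre satisfies $F^2=0$, the class $K_S$ is numerically a rational multiple of $F$, whence $K_S^2=0$. Noether's formula then gives
\[
\chi(S,\mathcal{O}_S)=\frac{1}{12}\bigl(K_S^2+c_2(S)\bigr)=\frac{1}{12}\,e(S),
\]
using that $c_2(S)=e(S)$ is the topological Euler number.

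Next I would compute $e(S)$ fibrewise. By multiplicativity of the topological Euler number for the fibration $f\colon S\to C$ with general fibre $F$ an elliptic curve (so $e(F)=0$, and $f$ is a topological fibre bundle away from the finitely many singular fibres), one has
\[
e(S)=e(F)\,e(C)+\sum_{c\in C}\bigl(e(S_c)-e(F)\bigr)=\sum_{c\in C}e(S_c),
\]
where $e(S_c)$ denotes the Euler number of the reduced fibre $f^{-1}(c)_{\textup{red}}$. Kodaira's classification of singular fibres then shows that every fibre satisfies $e(S_c)\geq 0$, with $e(S_c)=0$ precisely when $f^{-1}(c)_{\textup{red}}$ is a smooth elliptic curve (multiplicity being topologically invisible, so a fibre of type ${}_m\mathrm{I}_0$ contributes $0$, while every other Kodaira type contributes a positive amount). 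Combining with the previous display, $\chi(S,\mathcal{O}_S)=\tfrac{1}{12}\sum_c e(S_c)\geq 0$, and equality holds if and only if every reduced fibre is a smooth elliptic curve, i.e.\ every singular fibre is a multiple of a smooth elliptic curve; this is exactly the condition that $f$ be almost smooth.

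It remains to upgrade \emph{almost smooth} to \emph{isotrivial}, which I expect to be the only genuinely non-formal step. Here I would invoke the functional ($j$-)invariant $J\colon C\to\mathbb{P}^1$ of the elliptic fibration, the morphism recording the $j$-invariant of the fibres. The value $J(c)=\infty$ occurs exactly at fibres of multiplicative type (Kodaira types $\mathrm{I}_n$ with $n\geq 1$, or $\mathrm{I}_n^*$), all of which have $e(S_c)>0$. Thus if $f$ is almost smooth the map $J$ omits $\infty$, so it factors through $\mathbb{A}^1=\mathbb{P}^1\setminus\{\infty\}$; since $C$ is projective, such a $J$ must be constant. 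Constancy of the $j$-invariant forces all smooth fibres to be mutually isomorphic, so $f$ is isotrivial, which yields the forward implication of the equality criterion. The reverse implication is immediate, since an almost smooth $f$ already gives $\chi(S,\mathcal{O}_S)=0$ by the Euler-number computation. The points requiring care are the bookkeeping of Euler numbers at multiple fibres (ensuring that a multiple of a smooth elliptic curve truly contributes $0$) and the precise identification of which Kodaira types realise $J=\infty$; both are settled by the singular-fibre table, whence the lemma.
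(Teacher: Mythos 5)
Your proof is correct. Note that the paper does not prove this lemma at all --- it is imported verbatim from \cite{BHPV04} (III, (18.2)~Theorem and V, (12.1)~Theorem) --- and your write-up is essentially the standard argument behind that citation: the canonical bundle formula gives \(K_S\equiv(\text{rational multiple of }F)\), hence \(K_S^2=0\), Noether's formula reduces everything to \(e(S)=\sum_{c}e(S_c)\geq 0\), the fibrewise Euler-number formula is exactly \cite[\Rmnum{3}, (11.4)~Proposition]{BHPV04} (which the paper itself invokes in the proof of \cref{lem:anti_pe}), Kodaira's table identifies the equality case as almost smoothness, and the constancy of the functional invariant \(J\colon C\to\mathbb{P}^1\) (a morphism from a projective curve omitting \(\infty\) is constant) upgrades almost smooth to isotrivial. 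One microscopic imprecision: \(J(c)=\infty\) occurs exactly at fibres of types \(\mathrm{I}_b\) and \(\mathrm{I}_b^{*}\) with \(b\geq 1\), whereas \(\mathrm{I}_0^{*}\) has finite \(J\); this is harmless here, since almost smoothness excludes all starred types anyway, and it is consistent with \cref{lem:singular_fibre_isotrivial}.
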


\begin{lemma}\label{lem:num_linear_equiv_Y}
	We have the following linear and numerical equivalences
	\[
		\xi\sim Y+\pi^{*}T_{S/C}\equiv Y+\pi^{*}(E_{0}-\frac{c_2(S)}{12}F).
	\]
	In particular, if \(c_2(S)=0\), then the tangent bundle \(T_{S}\) is pseudo-effective.
\end{lemma}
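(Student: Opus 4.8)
The plan is to read off the linear equivalence \(\xi\sim Y+\pi^{*}T_{S/C}\) directly from \cref{note:fibration} (10), which already records \(\xi-Y\sim\pi^{*}T_{S/C}\); this settles the first relation for free. The substance of the lemma is therefore to identify the numerical class of \(T_{S/C}\), and then to deduce pseudo-effectiveness when \(c_2(S)=0\).

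For the numerical equivalence, I would start from the formula \(T_{S/C}=-K_S+f^{*}K_C+E\) of \cref{note:fibration} (7), together with the decomposition \(E=E_0+\sum_{j}(m_j-1)F_j\). Substituting the canonical bundle formula of \cref{note:kappa_1} (3), namely \(K_S=f^{*}K_C+f^{*}(R^1f_{*}\mathcal{O}_S)^{\vee}+\sum_{j}(m_j-1)F_j\) written additively, the terms \(f^{*}K_C\) and \(\sum_{j}(m_j-1)F_j\) cancel, leaving the clean identity \(T_{S/C}=E_0-f^{*}(R^1f_{*}\mathcal{O}_S)^{\vee}\) of divisor classes on \(S\). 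Since \((R^1f_{*}\mathcal{O}_S)^{\vee}\) is a line bundle on \(C\) of degree \(\chi(\mathcal{O}_S)\), its pullback is numerically a multiple of the general fibre, i.e.\ \(f^{*}(R^1f_{*}\mathcal{O}_S)^{\vee}\equiv\chi(\mathcal{O}_S)\,F\). Invoking Noether's formula with \(c_1(S)^2=0\) (as recorded in \cref{note:kappa_1} (3)) to rewrite \(\chi(\mathcal{O}_S)=c_2(S)/12\), I obtain \(T_{S/C}\equiv E_0-\tfrac{c_2(S)}{12}F\); pulling back by \(\pi\) and adding \(Y\) yields the second claimed relation.

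For the final assertion, I would set \(c_2(S)=0\), so that \(\xi\equiv Y+\pi^{*}E_0\). Here \(Y\) is a prime divisor on \(W\) (hence effective) by \cref{note:fibration} (8), while \(E_0=\sum_{i\in I}(\nu_i-1)E_i\) has nonnegative coefficients and is therefore effective on \(S\), so \(\pi^{*}E_0\) is effective on \(W\). Thus \(\xi=\mathcal{O}_W(1)\) is numerically equivalent to the effective divisor \(Y+\pi^{*}E_0\) and is in particular pseudo-effective, which by definition means \(T_S\) is pseudo-effective.

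I expect the only point requiring genuine care to be the cancellation step and the numerical identification \(f^{*}(R^1f_{*}\mathcal{O}_S)^{\vee}\equiv\tfrac{c_2(S)}{12}F\), which hinges on the degree computation \(\deg(R^1f_{*}\mathcal{O}_S)^{\vee}=\chi(\mathcal{O}_S)\) and Noether's formula; everything else is formal bookkeeping, and the pseudo-effectiveness conclusion is then immediate from the effectivity of both \(Y\) and \(E_0\).
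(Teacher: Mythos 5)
Your proposal is correct and takes essentially the same route as the paper: it likewise combines \(T_{S/C}=-K_S+f^{*}K_C+E\) and the decomposition \(E=E_0+\sum_{j}(m_j-1)F_j\) from \cref{note:fibration}~(5) and (7) with the canonical bundle formula of \cref{note:kappa_1}~(3), uses \(\deg(R^1f_{*}\mathcal{O}_S)^{\vee}=\chi(\mathcal{O}_S)=c_2(S)/12\) (Noether's formula with \(c_1(S)^2=0\)) to get \(T_{S/C}\equiv E_0-\tfrac{c_2(S)}{12}F\), and then invokes \(\xi-Y\sim\pi^{*}T_{S/C}\) to conclude. The only slip is a citation: that relation is item~(9) of \cref{note:fibration}, not item~(10), and your closing effectivity argument (\(Y\) a prime divisor, \(E_0\) effective) is exactly what the paper leaves implicit.
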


\begin{proof}
	By the canonical bundle formula and \cref{note:fibration} (5),
	we have
	\[
		\omega_{S} = f^{*}(\omega_{C}\otimes(R^1f_{*}\mathcal{O}_{S})^{\vee}) \otimes \mathcal{O}_{S}\big(\sum_{j\in J}(m_{j}-1)F_{j}\big)
		=f^{*}(\omega_{C}\otimes(R^1f_{*}\mathcal{O}_{S})^{\vee}) \otimes \mathcal{O}_{S}(E-E_0)
	\]
	where \(\deg((R^1f_{*}\mathcal{O}_{S})^{\vee})=c_2(S)/12\).
	By \cref{note:fibration} (5) and (7), we have \(T_{S/C}\equiv E_{0}-c_2(S)F/12\).
	Together with \cref{note:fibration} (9), our lemma is thus proved.
\end{proof}

In what follows, we show the normalised fibres defined in \cref{note:kappa_1} (2) are all \(\mathbb{Q}\)-effective.
We refer readers to \cref{rmk:normalised}  for a more detailed description.
\begin{lemma}\label{lem:anti_pe}
	The divisor	\(-T_{S/C}\equiv c_2(S)F/12-E_0\) is pseudo-effective.
	In particular, the normalised fibres defined in \cref{note:kappa_1} (2)  are all \(\mathbb{Q}\)-effective.
\end{lemma}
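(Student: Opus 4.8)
The plan is to localise the problem fibre by fibre: I will show that each normalised fibre $\widetilde{S_c}$ of \cref{note:kappa_1} (2) is an effective $\mathbb{Q}$-divisor, and that $-T_{S/C}$ is numerically equivalent to their sum, whence pseudo-effectivity is immediate. To set up, recall from \cref{lem:num_linear_equiv_Y} that $-T_{S/C}\equiv \frac{c_2(S)}{12}F-E_0$. Combining the canonical bundle formula of \cref{note:kappa_1} (3) with the decomposition $E=E_0+\sum_{j}(m_j-1)F_j$ of \cref{note:fibration} (5), one checks that $-T_{S/C}\sim f^{*}\mathfrak{d}-E_0$ for an integral divisor $\mathfrak{d}$ on $C$ with $\deg\mathfrak{d}=\chi(\mathcal{O}_S)=c_2(S)/12$, which is nonnegative by \cref{lem:BHPV_chi_0}. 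Since the general fibre is an elliptic curve of Euler number $0$, this total degree equals $\frac{1}{12}\sum_{c\in C}e(S_c)$, so the positive ``budget'' $\frac{c_2(S)}{12}$ is exactly the sum of the local Euler-number contributions.

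Next I would verify the numerical identity $-T_{S/C}\equiv\sum_{c\in C}\widetilde{S_c}$ by unwinding the definition in \cref{note:kappa_1} (2). For a multiple fibre $S_c=m_jF_j$ the contribution is $\frac{1}{12}e(F_j)m_jF_j$, with no $E_0$-part. For a non-multiple fibre $S_c=\sum_i\nu_iE_i$, using $\sum_i E_i=(S_c)_{\mathrm{red}}$ the contribution rearranges as
\[
	\widetilde{S_c}=\frac{e(S_c)}{12}\,S_c-\sum_{i\,:\,f(E_i)=c}(\nu_i-1)E_i,
\]
so the coefficient of each $E_i$ equals $1-\bigl(1-\tfrac{1}{12}e(S_c)\bigr)\nu_i$. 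Summing over $c$ and using $m_jF_j\equiv S_c\equiv F$ together with $c_2(S)=\sum_c e(S_c)$, the positive parts add up to $\frac{c_2(S)}{12}F$ and the negative parts to $-E_0$, matching the previous paragraph.

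The crux is the effectivity of each $\widetilde{S_c}$. For multiple fibres this is clear since $e(F_j)\ge 0$ for every fibre type. For a non-multiple fibre, any component with $\nu_i=1$ contributes the coefficient $\frac{1}{12}e(S_c)\ge0$, so only components with $\nu_i\ge 2$ require attention; by Kodaira's classification these occur solely in the additive fibres $I_0^{*},\,I_n^{*},\,IV^{*},\,III^{*},\,II^{*}$. Reading off the Euler numbers and component multiplicities from \cite{BHPV04}, the maximal multiplicity $\nu_{\max}$ of such a fibre satisfies $\nu_{\max}(12-e(S_c))\le 12$ --- with equality $e(S_c)=12\bigl(1-\nu_{\max}^{-1}\bigr)$ precisely for the central components of $I_0^{*},IV^{*},III^{*},II^{*}$ (where $\nu_{\max}=2,3,4,6$), and strict inequality for $I_n^{*}$ with $n\ge1$ (where $\nu_{\max}=2$). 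Hence $1-\bigl(1-\tfrac{1}{12}e(S_c)\bigr)\nu_i\ge0$ for every component, so each $\widetilde{S_c}$ is $\mathbb{Q}$-effective.

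Combining the three steps, $-T_{S/C}\equiv\sum_c\widetilde{S_c}$ is numerically equivalent to an effective $\mathbb{Q}$-divisor and is therefore pseudo-effective, while the $\mathbb{Q}$-effectivity of the normalised fibres has been established along the way. The main obstacle is the third step: the inequality $\nu_i(12-e(S_c))\le12$ is sharp and leaves no numerical slack, so the argument genuinely depends on the exact values in Kodaira's table of singular fibres rather than on a soft positivity estimate; phrasing this cleanly (ideally more uniformly than a fibre-type case check) is the delicate point.
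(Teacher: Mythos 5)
Your proposal is correct and takes essentially the same route as the paper's proof: both rewrite \(\frac{c_2(S)}{12}F-E_0\) as \(\sum_{c\in C}\widetilde{S_c}\) using \(c_2(S)=\sum_{c}e(S_c)\) (cf.~\cite[\Rmnum{3}, (11.4)~Proposition]{BHPV04}) and then verify the coefficient inequality \(1-\bigl(1-\tfrac{1}{12}e(S_c)\bigr)\nu_i\ge 0\) by inspecting Kodaira's table for the non-reduced fibre types \(\text{\Rmnum{1}}_b^{*}\), \(\text{\Rmnum{2}}^{*}\), \(\text{\Rmnum{3}}^{*}\), \(\text{\Rmnum{4}}^{*}\). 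Your formulation \(\nu_{\max}(12-e(S_c))\le 12\) is merely an equivalent rewriting of the paper's case check (with the same sharp equalities at the maximal-multiplicity components), so there is no substantive difference.
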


\begin{proof}
	Applying \cite[\Rmnum{3}, (11.4)~Proposition]{BHPV04},
	we have
	\begin{align*}
		\frac{c_2(S)}{12}F - E_{0} & = \Big(\frac{1}{12} \sum_{c\in C} e(S_{c})\Big) F - E_{0} \equiv \frac{1}{12} \sum_{c\in C} e(S_{c}) S_{c}
		- \sum_{i\in I} (\nu_{i} - 1) E_{i}                                                                                                     \\
		                           & = \frac{1}{12} \sum_{j\in J} e(F_{j}) m_{j}F_{j}
		+ \frac{1}{12} \sum_{i\in I} e(S_{f(E_{i})}) \nu_{i}E_{i}
		- \sum_{i\in I} (\nu_{i} - 1) E_{i}                                                                                                     \\
		                           & = \frac{1}{12} \sum_{j\in J} e(F_j) m_j F_j
		+ \sum_{i\in I} \Big(1 - \Big(1- \frac{1}{12} e(S_{f(E_{i})})\Big) \nu_{i}\Big) E_{i}  = \sum_{c\in C}\widetilde{S_c}.
	\end{align*}
	By the non-negativity of \(e(F_j)\geq 0\), we only need to verify that
	for each non-multiple non-reduced singular fibre \(S_{f(E_i)}\) with \(i\in I\),
	we have
	\[
		1 - \Big(1- \frac{1}{12} e(S_{f(E_i)})\Big) \nu_i \geq 0.
	\]
	We refer to the Kodaira's table \cite[p.~201]{BHPV04} and check case by case, noting that
	the Euler number for a singular fibre of Types
	\Rmnum{1}\(_{0}^{*}\), \Rmnum{1}\(_{b}^{*}\), \Rmnum{2}\(^{*}\), \Rmnum{3}\(^{*}\) or \Rmnum{4}\(^{*}\)
	is \(6,b+6,10,9\) or \(8\), respectively,
	and the multiplicity of an irreducible component in a singular fibre
	of Types \Rmnum{1}\(_{0}^{*}\), \Rmnum{1}\(_{b}^{*}\), \Rmnum{2}\(^{*}\), \Rmnum{3}\(^{*}\) or \Rmnum{4}\(^{*}\)
	is no more than 2, 2, 6, 4 or 3, respectively (cf.~\cite[Table~IV.3.1]{Mir89}).
\end{proof}

To prove \cref{thm:main_kappa_1},
we first treat the case when \(f\) is non-isotrivial.
The proof of the following proposition is inspired by \cite[Proposition~5.4]{HP20}.
\begin{proposition}\label{prop:non_iso_non_pe}
	Suppose that \(f\) is non-isotrivial.
	Then the tangent bundle \(T_{S}\) is not pseudo-effective.
\end{proposition}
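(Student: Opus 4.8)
The plan is to assume, for contradiction, that \(T_S\) is pseudo-effective and to squeeze out of this a numerically trivial foliation on \(S\), which will contradict the non-isotriviality of \(f\). First I record that non-isotriviality forces \(c_2(S)>0\): by \cref{lem:BHPV_chi_0} a relatively minimal elliptic fibration has \(\chi(\mathcal O_S)=0\) exactly when it is isotrivial and almost smooth, so non-isotriviality gives \(\chi(\mathcal O_S)>0\) and hence \(c_2(S)=12\chi(\mathcal O_S)>0\) by \cref{note:kappa_1} (3). Saying \(T_S\) is pseudo-effective means \(\xi=\mathcal O_W(1)\) is pseudo-effective on \(W=\mathbb P(T_S)\). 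I fix a very ample \(H\) on \(S\) and take \(B\in|mH|\) very general for \(m\gg0\); then \(B\) is a smooth curve and the divisors \(\pi^{-1}(B)=\mathbb P(T_S|_B)\) sweep out \(W\). Since pseudo-effectiveness restricts to a very general member of a covering family of divisors, \(\xi|_{\pi^{-1}(B)}=\mathcal O_{\mathbb P(T_S|_B)}(1)\) is pseudo-effective on the ruled surface \(\mathbb P(T_S|_B)\to B\). On such a ruled surface the pseudo-effective cone is spanned by a fibre and the section of minimal self-intersection, and a short computation (writing \(\xi\equiv C_0+\mu_{\max}(T_S|_B)f\) for the minimal section \(C_0\)) shows that the tautological class is pseudo-effective if and only if \(T_S|_B\) carries a sub-line-bundle of non-negative degree, i.e.\ \(\mu_{\max}(T_S|_B)\geq0\). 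Thus the hypothesis yields \(\mu_{\max}(T_S|_B)\geq0\) for very general \(B\).

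Next I bring in generic semi-positivity. Since \(\kappa(S)=1\) the surface \(S\) is not uniruled, so by Miyaoka's generic semi-positivity theorem (\cite{Miy87-b}) the cotangent bundle \(\Omega_S\) is generically nef; concretely \(\Omega_S|_B\) is nef for very general \(B\in|mH|\), which is equivalent to \(\mu_{\max}(T_S|_B)\leq0\). Combined with the previous paragraph this forces \(\mu_{\max}(T_S|_B)=0\), so \(T_S|_B\) has a degree-zero sub-line-bundle for very general \(B\). Letting \(B\) vary and invoking the compatibility of the Harder--Narasimhan filtration with restriction to a very general high-degree member, the maximal destabilizing subsheaf of \(T_S\) is a saturated rank-one subsheaf \(M\subseteq T_S\) with \(M\cdot H=0\); since generic nefness of \(\Omega_S\) against every polarization gives \(M\cdot H'\leq0\) for all ample \(H'\), the class \(-M\) is nef, and \(M\cdot H=0\) with \(H\) ample then forces \(M\equiv0\). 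In other words, \(T_S\) would admit a numerically trivial foliation \(M\).

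It remains to rule out such a foliation. The fibration foliation \(T_{S/C}\) is not a candidate: by \cref{lem:anti_pe} the class \(-T_{S/C}\equiv\frac{c_2(S)}{12}F-E_0\) is pseudo-effective, and were \(T_{S/C}\equiv0\) we would get \(E_0\equiv\frac{c_2(S)}{12}F\); but \(E_0\) is the fixed (negative) part of the Zariski decomposition in \cref{note:fibration} (5), so it cannot be numerically equivalent to the nonzero nef class \(\frac{c_2(S)}{12}F\) unless \(E_0=0\), which would give \(c_2(S)=0\), contrary to \(c_2(S)>0\). Hence \(M\neq T_{S/C}\), so \(M\) is generically transverse to the fibres; restricting to a general fibre \(F\) gives a degree-zero sub-line-bundle \(M|_F\subset T_S|_F\) transverse to \(T_F=T_{S/C}|_F\), i.e.\ a holomorphic splitting \(T_S|_F\cong T_F\oplus M|_F\) varying with \(F\). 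This is precisely a flat (Ehresmann) connection for \(f\) over the smooth locus, which makes all smooth fibres isomorphic and forces \(f\) to be isotrivial, the desired contradiction.

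The hard part, and the step I would need to justify most carefully, is this last implication: promoting a numerically trivial transverse foliation to an honest flat connection (controlling its tangencies and singularities along the non-reduced and multiple fibres) and deducing isotriviality. An alternative that sidesteps foliation language is to apply \cref{lem:pe_nonvanishing} directly, bounding \(H^0(S,\Sym^iT_S\otimes\mathcal O_S(jH))\) through restriction to a very general \(B\) and the vanishing \(H^0(B,\Sym^i(T_S|_B))=0\) coming from \(\mu_{\max}(T_S|_B)<0\); this again reduces to the strict inequality \(\mu_{\max}(T_S|_B)<0\), which is exactly where the non-isotrivial hypothesis must be spent.
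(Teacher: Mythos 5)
Your proposal is correct in substance but takes a genuinely different route from the paper. The paper never leaves \(W=\mathbb{P}(T_S)\): it restricts Boucksom's divisorial Zariski decomposition \(\xi\equiv\sum a_iY_i+P\) to \(W_F=\mathbb{P}(T_S|_F)\) over a general fibre, where non-isotriviality makes \(T_S|_F\cong\mathcal{F}_2\) the non-split Atiyah extension, so the tautological section \(C_F\) is extremal in \(W_F\); this forces \(P|_{W_F}=0\), identifies the negative part with \(Y\), and yields that \(\pi^{*}T_{S/C}\equiv\xi-Y\) is pseudo-effective, whence \cref{lem:anti_pe} and Zariski's lemma give \(E_0=0\), \(c_2(S)=0\), contradicting \cref{lem:BHPV_chi_0}. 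You instead work on the base: restriction of \(\xi\) to \(\mathbb{P}(T_S|_B)\) for very general high-degree \(B\) gives \(\mu_{\max}(T_S|_B)\geq 0\), Miyaoka's generic semipositivity gives \(\mu_{\max}(T_S|_B)\leq 0\), and Mehta--Ramanathan upgrades the resulting degree-zero destabiliser to a saturated rank-one subsheaf \(M\subseteq T_S\) with \(M\equiv 0\); you then rule out \(M=T_{S/C}\) by essentially the same \cref{lem:anti_pe} computation and split \(0\to T_F\to T_S|_F\to\mathcal{O}_F\to 0\) on a general fibre. Your route buys independence from the cone structure of \(\mathbb{P}(T_S)\) itself (only the classical cone of a ruled surface over a curve is used) and is closer to arguments that generalise in higher dimension; the paper's route buys brevity, needing no semistability or restriction theorems.

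Three repairs are needed, all local. First, \(\mu_{\max}(T_S|_B)=0\) does not by itself produce a degree-zero sub-\emph{line}-bundle: you must exclude that \(T_S|_B\) is semistable, which follows since \(\deg(T_S|_B)=-K_S\cdot B<0\) (as \(K_S\sim_{\mathbb{Q}}\delta(f)F\) with \(\delta(f)>0\) by \cref{note:kappa_1}~(4)), so semistability would force \(\mu_{\max}<0\). Second, ``the class \(-M\) is nef'' is wrong: on a surface the cone dual to the ample cone is \(\overline{\NE}(S)=\PE(S)\), so you only get \(-M\) pseudo-effective; the conclusion \(M\equiv 0\) survives, since a nonzero pseudo-effective class pairs strictly positively with an ample class. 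Third, and most importantly, the step you flag as ``the hard part'' is unnecessary: non-isotriviality means exactly that the Kodaira--Spencer class is nonzero at a general point of \(C\), i.e., that the restricted sequence is the \emph{non-split} extension \(\mathcal{F}_2\) --- this is precisely where the paper's own proof starts --- so your splitting \(T_S|_F\cong T_F\oplus M|_F\cong\mathcal{O}_F\oplus\mathcal{O}_F\) on a general fibre is already the desired contradiction, with no Ehresmann connection, holonomy, or control of tangencies required. (If you did want the foliation route, note that the tangency divisor \(Z\equiv -M-f^{*}K_C-E\) satisfies \(Z\cdot F=0\), hence is vertical; but the Kodaira--Spencer one-liner supersedes this.) With these fixes your argument is complete.
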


\begin{proof}
	Suppose to the contrary that \(T_{S}\) is pseudo-effective.
	Since \(f\) is non-isotrivial,
	there is a non-zero Kodaira-Spencer class
	which induces the following unique non-trivial extension on an elliptic curve
	\[
		0\longrightarrow\mathcal{O}_F\longrightarrow T_{S}|_{F}\cong\mathcal{F}_2\longrightarrow \mathcal{O}_{F}\longrightarrow 0.
	\]
	Note that the above short exact sequence is the restriction of
	\[
		0\longrightarrow T_{S/C}\longrightarrow T_{S}\longrightarrow f^{*}T_{C}\longrightarrow\mathcal{F}\longrightarrow 0
	\]
	to a general fibre \(F\) (cf.~\cref{note:fibration}).
	Let \(W_{F}\coloneqq \pi^{-1}(F)=\mathbb{P}(T_{S}|_{F})\), and \(C_{F}\coloneqq Y\cap W_{F}\)
	which is the section of \(\pi|_{W_{F}}\)
	associated with the surjection \(T_{S}|_{F}\to \mathcal{O}_{F}\to 0\).
	Since \(\xi\) is pseudo-effective, by the divisorial Zariski decomposition (cf.~\cite[Theorem 3.12]{Bou04}), we have
	\begin{align}\label{eq:dzd_xi}
		\xi\equiv\sum a_iY_i+P
	\end{align}
	where \(Y_i\) are finitely many prime divisors, \(a_i>0\), and \(P\) is a modified nef \(\mathbb{R}\)-divisor
	(in the sense that \(P|_D\) is pseudo-effective for every prime divisor \(D\) on \(W\))
	(cf.~\cite[Proposition~2.4]{Bou04}).
	Restricting the decomposition \eqref{eq:dzd_xi} to \(W_{F}\), we have
	\begin{align*}
		C_F\sim\xi|_{W_F}\equiv \sum_{i}a_iY_{i}|_{W_{F}}+P|_{W_{F}}.
	\end{align*}
	Here, we can choose sufficiently general \(F\) such that \(Y_{i}\cap W_{F}\) (if non-zero) are mutually distinct,
	noting that some of \(Y_i\) come from the pull-back of components of \(E\) on \(S\) and hence have no intersection with \(W_F\).
	Since the tautological section \(C_F\) is extremal in \(W_F\) but \(P|_{W_F}\) is still numerically movable by noting that \(\{W_F\}_F\) is a free family,
	we have \(P|_{W_F}=0\).
	Without loss of generality,
	we may assume that \(a_1=1\), \(Y_1\cap W_F=C_F\) and \(Y_i\cap W_F=0\) for \(i\neq 1\).
	Now that \(Y_1\cap W_F=C_F=Y\cap W_F\) for every sufficiently general fibre \(F\) of \(f\), we have \(Y_1=Y\).
	In particular, \(\pi^{*}T_{S/C}\equiv\xi-Y\equiv\sum_{i\geq 2}a_iY_i+P\) is pseudo-effective
	(cf.~\cref{lem:num_linear_equiv_Y}).
	By \cref{lem:anti_pe} and the Zariski decomposition on the surface \(S\), we have \(E_0=0\) and hence \(c_2(S)=0\), a contradiction to \cref{lem:BHPV_chi_0}.
\end{proof}

In the remaining part of this section, we shall deal with the isotrivial case, which is more troublesome.
Let us first recall the following lemma.
\begin{lemma}[cf.~e.g.~{\cite[Lemma~3.2]{PS20}}]\label{lem:singular_fibre_isotrivial}
	Let \(f\colon S\to C\) be a relatively minimal isotrivial elliptic fibration.
	Then the singular fibre of \(f\) is either a multiple of smooth elliptic curves (of Type \(_m\)\textup{\Rmnum{1}}\(_0\))
	or a non-multiple fibre not being of Types \textup{\Rmnum{1}}\(_b\) or \textup{\Rmnum{1}}\(_b^{*}\) for \(b\geq 1\).
\end{lemma}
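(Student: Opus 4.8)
The plan is to read off the admissible fibre types from the functional (modular) invariant of the fibration together with Kodaira's classification. Since $f$ is isotrivial, all smooth fibres are isomorphic to one fixed elliptic curve and hence share a single $j$-invariant $j_0$; equivalently, the functional invariant $J\colon C\to\mathbb{P}^1$, sending $c\in C$ to the $j$-invariant of the nearby smooth fibres, is the constant map with value $j_0\in\mathbb{A}^1\subset\mathbb{P}^1$ (finite, because a general fibre is a smooth elliptic curve). In particular $J$ is holomorphic, i.e.\ it has no poles on $C$.

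First I would dispose of the non-multiple singular fibres. By Kodaira's table relating the local behaviour of $J$ at a point $c\in C$ to the type of $S_c$ (cf.\ \cite{BHPV04} and the table already used in the proof of \cref{lem:anti_pe}), the fibres over which $J$ acquires a pole are exactly those of Types \textup{\Rmnum{1}}\(_b\) and \textup{\Rmnum{1}}\(_b^{*}\) with \(b\geq 1\) (the pole order being $b$), whereas for every remaining type---namely \textup{\Rmnum{1}}\(_0^{*}\), \textup{\Rmnum{2}}, \textup{\Rmnum{3}}, \textup{\Rmnum{4}}, \textup{\Rmnum{4}}\(^{*}\), \textup{\Rmnum{3}}\(^{*}\), \textup{\Rmnum{2}}\(^{*}\)---the invariant $J$ takes a finite value. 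As $J$ is constant with finite value, it has no poles, so no non-multiple fibre can be of Type \textup{\Rmnum{1}}\(_b\) or \textup{\Rmnum{1}}\(_b^{*}\) with \(b\geq 1\).

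It remains to analyse the multiple fibres. Here I would invoke the standard fact that if $mF$ is a multiple fibre with reduced support $F=(S_c)_{\mathrm{red}}$ and multiplicity $m>1$, then the normal bundle $N_{F/S}=\mathcal{O}_F(F)$ is a torsion element of order exactly $m$ in the identity component $\operatorname{Pic}^0(F)$. For each additive type \textup{\Rmnum{1}}\(_0^{*}\), \textup{\Rmnum{2}}, \textup{\Rmnum{3}}, \textup{\Rmnum{4}}, \textup{\Rmnum{4}}\(^{*}\), \textup{\Rmnum{3}}\(^{*}\), \textup{\Rmnum{2}}\(^{*}\) the group $\operatorname{Pic}^0(F)$ is isomorphic to the additive group $\mathbb{G}_a$ (additive reduction), which has no nontrivial torsion in characteristic zero; hence none of these can support a multiple fibre, and every multiple fibre is therefore of Type \(_m\)\textup{\Rmnum{1}}\(_b\) for some \(b\geq 0\). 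Since its reduced support being a cycle \textup{\Rmnum{1}}\(_b\) with \(b\geq 1\) would again force $J$ to have a pole, the constancy of $J$ forces $b=0$, so the only multiple fibres are of Type \(_m\)\textup{\Rmnum{1}}\(_0\), i.e.\ multiples of smooth elliptic curves. Combining the two cases gives the asserted dichotomy.

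The delicate point is the treatment of the multiple fibres: one cannot exclude Types \textup{\Rmnum{1}}\(_b\), \textup{\Rmnum{1}}\(_b^{*}\) in the multiple case from $J$ alone, since $J$ does not see the multiplicities, so one needs the a priori restriction that multiple fibres have reduced support of Type \textup{\Rmnum{1}}\(_b\) (the torsion-normal-bundle characterisation) before invoking the constancy of $J$. A reader preferring a uniform argument may instead note that isotriviality forces the monodromy representation $\pi_1(C^{\circ})\to\mathrm{SL}_2(\mathbb{Z})$, where $C^{\circ}\subseteq C$ is the locus over which $f$ is smooth, to have finite image---it factors through the finite automorphism group of the fixed fibre---so that every local monodromy is of finite order, which simultaneously excludes the infinite-order Types \textup{\Rmnum{1}}\(_b\) and \textup{\Rmnum{1}}\(_b^{*}\) with \(b\geq 1\), be the fibre multiple or not.
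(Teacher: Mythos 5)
The paper offers no internal proof of this lemma---it is quoted from \cite{PS20}---so there is nothing to compare line by line; judged on its own, your argument is correct and is the standard proof of this fact. Both halves check out: isotriviality makes the functional invariant \(J\colon C\to\mathbb{P}^1\) constant with finite value (constant on the dense locus of smooth fibres, and \(J\) is holomorphic), and by Kodaira's table the points where \(J\) has a pole are exactly those whose fibre has reduced type \(\mathrm{I}_b\) or \(\mathrm{I}_b^{*}\) with \(b\geq 1\), which disposes of the non-multiple case; and the torsion fact that \(\mathcal{O}_F(F)\) has order exactly \(m\) in \(\operatorname{Pic}^0(F)\), together with the torsion-freeness (unipotence) of \(\operatorname{Pic}^0\) for the additive types, forces every multiple fibre to be of type \({}_{m}\mathrm{I}_{b}\), after which constancy of \(J\) gives \(b=0\). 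The alternative monodromy argument you sketch is also sound: the representation factors through the finite automorphism group of the fixed fibre, while the local monodromies of \(\mathrm{I}_b\), \(\mathrm{I}_b^{*}\) and \({}_{m}\mathrm{I}_{b}\) with \(b\geq 1\) have infinite order---though note that this route, too, still needs the \(\operatorname{Pic}^0\) step to conclude that a multiple fibre is a multiple of a \emph{smooth} elliptic curve.

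One small correction to your closing commentary: the ``delicate point'' is not that \(J\) fails to see \(\mathrm{I}_b\) in the multiple case. Near a fibre of type \({}_{m}\mathrm{I}_{b}\) with \(b\geq 1\), \(J\) has a pole of order \(b\) regardless of the multiplicity \(m\), so constancy of \(J\) \emph{does} exclude multiple fibres with nodal-cycle support directly (and types \(\mathrm{I}_b^{*}\) can never be multiple at all). What \(J\) alone cannot rule out is a hypothetical multiple fibre whose support has additive type (e.g.\ a ``multiple \(\textup{\Rmnum{2}}\)''), and that is precisely what your torsion-normal-bundle argument eliminates; your main text uses the two ingredients in the right order, so this is only an inaccuracy in the framing, not in the proof.
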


Before moving into the proof of \cref{thm:main_kappa_1} for the isotrivial case,
we do some preparations.
We give the following remark, which gives a detailed computation
in terms of the normalised fibres and their local equations.
\begin{remark}\label{rmk:normalised}
	For every point \(c\in C\), one can calculate the normalised singular fibre \(\widetilde{S_c}\)
	(cf.~\cref{note:kappa_1}, \cref{lem:anti_pe} and \cite[Table~IV.3.1]{Mir89})
	as follows.
	\begin{align*}
		\widetilde{S_c} & = \sum_{j\in J, f(F_j)=c}\frac{1}{12}e(F_j)m_jF_j+\sum_{i\in I, f(E_i)=c}
		\Big(1-\Big(1-\frac{1}{12} e(S_{f(E_{i})})\Big) \nu_{i}\Big) E_{i}                          \\
		                & =
		\begin{cases}
			\frac{1}{6}e_1                                                                               & \text{\Rmnum{2}}       \\
			\frac{1}{4}(e_1+e_2)                                                                         & \text{\Rmnum{3}}       \\
			\frac{1}{3}(e_1+e_2+e_3)                                                                     & \text{\Rmnum{4}}       \\
			\frac{1}{2}(e_2+e_3+e_4+e_5)                                                                 & \text{\Rmnum{1}}_0^{*} \\
			\frac{5}{6}e_1+\frac{2}{3}(e_2+e_9)+\frac{1}{2}(e_3+e_7)+\frac{1}{3}(e_4+e_8)+\frac{1}{6}e_5 & \text{\Rmnum{2}}^{*}   \\
			\frac{3}{4}(e_1+e_8) + \frac{1}{2}(e_2+e_5+e_7) + \frac{1}{4}(e_3+e_6)                       & \text{\Rmnum{3}}^{*}   \\
			\frac{2}{3}(e_1+e_3+e_5) + \frac{1}{3}(e_2+e_4+e_6)                                          & \text{\Rmnum{4}}^{*}
		\end{cases}
	\end{align*}

	\begin{figure}[htbp]\label{fig:singular_fibre_iso}
		\begin{tikzpicture}[x=0.75pt,y=0.71pt,yscale=-1,  scale=0.7]
			\draw (55,0) .. controls (45,18) and (40,24) .. (35,25)
			.. controls (29,25) and (10,20) .. (10,40) .. controls (10,59) and (29,54) .. (35,55)
			.. controls (40,55) and (44,54) .. (55,75);

			\draw (55,25) node [font=\tiny] {\(e_1\)};

			\draw (40,80) node [font=\footnotesize] {\Rmnum{1}\(_{0}\)};

			\draw (50,140) .. controls (45,160) and (20,165) .. (20,165)
			.. controls (20,165) and (45,170) .. (50,190);

			\draw (60,149) node [font=\tiny] {\(e_1\)};
			\draw (10,166) node [font=\tiny] {pt};

			\draw (40,198) node [font=\footnotesize] {\Rmnum{2}};

			\draw (0,265) .. controls (25,284) and (55,284) .. (80,265);
			\draw (0,295) .. controls (25,275) and (55,275) .. (80,295);

			\draw (90,265) node [font=\tiny] {\(e_1\)};
			\draw (90,295) node [font=\tiny] {\(e_2\)};

			\draw (40,305) node [font=\footnotesize] {\Rmnum{3}};

			\draw (25,365) -- (55,425);
			\draw (25,425) -- (55,365);
			\draw (10,395) -- (70,395);

			\draw (23,359) node [font=\tiny] {\(e_1\)};
			\draw (57,359) node [font=\tiny] {\(e_2\)};
			\draw (77,395) node [font=\tiny] {\(e_3\)};

			\draw (40,440) node [font=\footnotesize] {\Rmnum{4}};

			\draw (300,10) -- (300,70); 
			\draw (340,10) -- (340,70);
			\draw (380,10) -- (380,70);
			\draw (420,10) -- (420,70);

			\draw (270,50) -- (450,50); 

			\draw (300,4) node [font=\tiny] {\(e_2\)};
			\draw (340,4) node [font=\tiny] {\(e_3\)};
			\draw (380,4) node [font=\tiny] {\(e_4\)};
			\draw (420,4) node [font=\tiny] {\(e_5\)};

			\draw (278,44) node [font=\tiny] {\(2e_1\)};

			\draw (360,85) node [font=\footnotesize] {\Rmnum{1}\(_{0}^{*}\)};

			\draw (260,120) -- (330,120); 
			\draw (350,120) -- (460,120); 

			\draw (300,160) -- (380,160); 
			\draw (420,160) -- (460,160); 

			\draw (280,100) -- (280,180); 
			\draw (320,100) -- (320,180); 
			\draw (360,100) -- (360,180); 
			\draw (400,100) -- (400,180); 
			\draw (440,100) -- (440,180); 

			\draw (268,114) node [font=\tiny] {\(2e_2\)};
			\draw (452,114) node [font=\tiny] {\(6e_6\)};

			\draw (310,154) node [font=\tiny] {\(4e_4\)};
			\draw (428,154) node [font=\tiny] {\(2e_9\)};

			\draw (280,190) node [font=\tiny] {\(e_1\)};
			\draw (320,190) node [font=\tiny] {\(3e_3\)};
			\draw (360,190) node [font=\tiny] {\(5e_5\)};
			\draw (400,190) node [font=\tiny] {\(3e_7\)};
			\draw (440,190) node [font=\tiny] {\(4e_8\)};

			\draw (360,210) node [font=\footnotesize] {\Rmnum{2}\(^{*}\)};

			\draw (260,250) -- (340,250); 
			\draw (380,250) -- (460,250);

			\draw (300,290) -- (420,290); 

			\draw (280,230) -- (280,310); 
			\draw (320,230) -- (320,310); 
			\draw (360,230) -- (360,310); 
			\draw (400,230) -- (400,310); 
			\draw (440,230) -- (440,310); 

			\draw (268,244) node [font=\tiny] {\(2e_2\)};
			\draw (452,244) node [font=\tiny] {\(2e_7\)};

			\draw (308,284) node [font=\tiny] {\(4e_4\)};

			\draw (280,320) node [font=\tiny] {\(e_1\)};
			\draw (320,320) node [font=\tiny] {\(3e_3\)};
			\draw (360,320) node [font=\tiny] {\(2e_5\)};
			\draw (400,320) node [font=\tiny] {\(3e_6\)};
			\draw (440,320) node [font=\tiny] {\(e_8\)};

			\draw (360,340) node [font=\footnotesize] {\Rmnum{3}\(^{*}\)};

			\draw (280,375) -- (320,375); 
			\draw (340,375) -- (380,375);
			\draw (400,375) -- (440,375);

			\draw (300,355) -- (300,425); 
			\draw (360,355) -- (360,425);
			\draw (420,355) -- (420,425);

			\draw (260,405) -- (460,405); 

			\draw (285,369) node [font=\tiny] {\(e_1\)};
			\draw (345,369) node [font=\tiny] {\(e_3\)};
			\draw (405,369) node [font=\tiny] {\(e_5\)};

			\draw (300,435) node [font=\tiny] {\(2e_2\)};
			\draw (360,435) node [font=\tiny] {\(2e_4\)};
			\draw (420,435) node [font=\tiny] {\(2e_6\)};

			\draw (268,399) node [font=\tiny] {\(3e_7\)};

			\draw (360,450) node [font=\footnotesize] {\Rmnum{4}\(^{*}\)};
		\end{tikzpicture}
		\caption{Singular fibre of an isotrivial elliptic fibration}
	\end{figure}
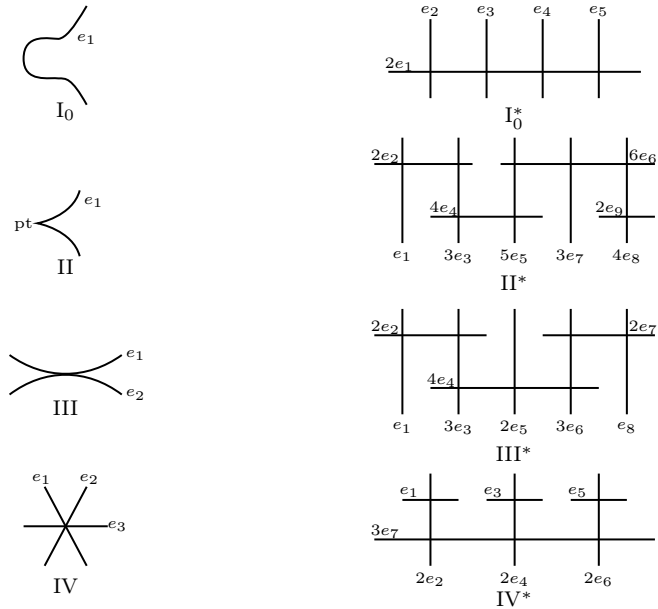

	For every point \(s\in \Gamma\),
	we may use \(x,y\) as regular parameters of \(\mathcal{O}_{S,s}\).
	The ideal sheaf of the singular points on each fibre could be chosen as below
	(cf.~\cite[Proof of Proposition~3.1]{Ser96}).
	\begin{table}[htbp]
		\centering
		\caption{Local equation of each fibre}
		\label{table:local_eq}
		\begin{tabular}{c|c|c|c|c|c|c|c}
			\toprule
			type of \(S_c\)            & \Rmnum{2}   & \Rmnum{3}   & \Rmnum{4}     & \Rmnum{1}\(_0^{*}\)                          & \Rmnum{2}\(^{*}\) & \Rmnum{3}\(^{*}\) & \Rmnum{4}\(^{*}\) \\ \midrule
			local equation of \(S_c\)  & \(x^3=y^2\) & \(x^4=y^2\) & \(x^3=y^3\)   & \multicolumn{4}{c}{\(x^{\nu_i}y^{\nu_j}=0\)}                                                             \\ \midrule
			\(\mathcal{I}_{\Gamma,s}\) & \((x^2,y)\) & \((x^3,y)\) & \((x^2,y^2)\) & \multicolumn{4}{c}{\((x,y)\)}                                                                            \\
			\bottomrule
		\end{tabular}
	\end{table}
\end{remark}

The following observation plays a significant role to the proof of \cref{thm:main_kappa_1}.
Once \(f\) is not almost smooth and if we assume \(\xi\) is pseudo-effective,
then the prime divisor \(Y\) would appear in the negative part
of the divisorial Zariski decomposition of \(\xi\).
\begin{lemma}\label{lem:xi_to_Y_nonpe}
	Let \(f\) be an isotrivial fibration.
	If \(c_2(S)>0\),
	then \(\xi|_Y\) is not pseudo-effective.
\end{lemma}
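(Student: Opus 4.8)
The plan is to show that the line bundle \(\mathcal{O}_Y(1)=\xi|_Y\) on the (blown‑up) surface \(Y\) violates the pseudo‑effectivity criterion of \cref{lem:pe_nonvanishing}. Recall from \cref{note:fibration}(9) that \(\sigma\coloneqq\pi|_Y\colon Y\to S\) is the blow‑up of \(S\) along the critical scheme \(\Gamma\), that \(\xi|_Y=\mathcal{O}_Y(-\Exc(\sigma))\otimes\sigma^{*}(-f^{*}K_C-E)\), and that \(\sigma_{*}\mathcal{O}_Y(1)=(-f^{*}K_C-E)\otimes\mathcal{I}_\Gamma\). Writing \(D\coloneqq -f^{*}K_C-E\) and \(\mathcal{E}\coloneqq\Exc(\sigma)\), we have \(\xi|_Y=\sigma^{*}D-\mathcal{E}\), and more generally \(i\,\xi|_Y+j\,\sigma^{*}A=\sigma^{*}(iD+jA)-i\mathcal{E}\) for any divisor \(A\) on \(S\). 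Pushing forward, \(H^{0}(Y,i\,\xi|_Y+j\,\sigma^{*}A)=H^{0}(S,\mathcal{O}_S(iD+jA)\otimes\overline{\mathcal{I}_\Gamma^{\,i}})\), where \(\overline{\mathcal{I}_\Gamma^{\,i}}=\sigma_{*}\mathcal{O}_Y(-i\mathcal{E})\). Since \(c_2(S)>0\) and \(f\) is isotrivial, \cref{lem:singular_fibre_isotrivial} guarantees at least one additive singular fibre, whose reduced structure is singular; hence \(\Gamma\neq\emptyset\) and \(\mathcal{E}\neq 0\). Note also that \(D\cdot F=0\), since \(f^{*}K_C\) is pulled back from \(C\) and \(E\) is supported in fibres.

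The decisive numerical input is that the multiplicity of \(\Gamma\) exactly overshoots \(D^{2}\) by \(c_2(S)\). Using the Grothendieck relation \(\xi^{2}=\pi^{*}c_1(T_S)\cdot\xi-\pi^{*}c_2(T_S)\) on \(W\) together with \cref{lem:num_linear_equiv_Y} (and \(K_S^{2}=0\), \(T_{S/C}\cdot K_S=0\), the latter because each fibre component is a \((-2)\)-curve) one computes \((\xi|_Y)^{2}=\xi^{2}\cdot Y=D^{2}+\mathcal{E}^{2}=-c_2(S)<0\), where \(\mathcal{E}^{2}=-\sum_{s\in\Gamma}e(\mathcal{I}_{\Gamma,s})\). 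Now fix an ample divisor \(A\) on \(S\), so that \(H\coloneqq\sigma^{*}A\) is big on \(Y\), and estimate \(H^{0}(S,\mathcal{O}_S(iD+jA)\otimes\overline{\mathcal{I}_\Gamma^{\,i}})\). Choose a singular fibre \(F_{c_0}\) containing a point \(s_0\in\Gamma\cap F_{c_0}\). Because \(D\cdot F_{c_0}=0\), the restriction \(\mathcal{O}_S(iD+jA)|_{F_{c_0}}\) has degree \(j\,(A\cdot F_{c_0})\), bounded linearly in \(j\) and independent of \(i\); on the other hand, membership in \(\overline{\mathcal{I}_\Gamma^{\,i}}\) forces a section to cut out on \(F_{c_0}\) a subscheme at \(s_0\) of length \(\geq\lambda\,i\) with \(\lambda>0\) read off from the local equation of \(\mathcal{I}_{\Gamma,s_0}\) in \cref{rmk:normalised}. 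A nonzero section of a degree‑\(d\) invertible sheaf on the curve \(F_{c_0}\) has zero‑scheme of degree exactly \(d\), so \(\lambda\,i\leq j\,(A\cdot F_{c_0})\); hence once \(i>(A\cdot F_{c_0}/\lambda)\,j\) the restriction to \(F_{c_0}\) vanishes. Iterating over the finitely many singular fibres meeting \(\Gamma\) (the total available degree is \(O(j)\) while the forced vanishing is \(\Omega(i)\)) yields \(H^{0}(S,\mathcal{O}_S(iD+jA)\otimes\overline{\mathcal{I}_\Gamma^{\,i}})=0\) for \(i>c\,j\) with a suitable constant \(c>0\). By \cref{lem:pe_nonvanishing} this proves that \(\xi|_Y\) is not pseudo‑effective.

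The hard part will be the endgame of the section estimate, that is, upgrading the fibrewise vanishing to the global statement \(H^{0}=0\) for all \(i>cj\); the clean numerical shadow is \((\xi|_Y)^{2}=-c_2(S)<0\) together with \(\xi|_Y\cdot\sigma^{*}F=0\), which show that were \(\xi|_Y\) pseudo‑effective its positive part would be numerically proportional to \(\sigma^{*}F\), so the obstruction is genuinely concentrated on the vertical exceptional locus over the additive fibres and must be extracted from the local multiplicities \(e(\mathcal{I}_{\Gamma,s})\) rather than from any intersection against a pullback class. I expect the most delicate case to be \(g(C)=0\): there \(\sigma_{*}\xi|_Y=D\equiv 2F-E\) can itself be pseudo‑effective, so pushing forward to \(S\) gives no contradiction, and one is forced to use the precise multiplicities supplied by \cref{rmk:normalised} (equivalently the identity \(D^{2}+\mathcal{E}^{2}=-c_2(S)\)) to drive the vanishing through the iteration.
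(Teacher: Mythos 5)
Your setup is sound and partially overlaps the paper's: the paper also works with \(\sigma=\pi|_Y\colon Y\to S\), \(\xi|_Y=-\Exc(\sigma)-\sigma^{*}(f^{*}K_C+E)\), and in the end reduces everything to the local multiplicities of \(\mathcal{I}_\Gamma\) at the points of \(\Gamma\) via \cref{rmk:normalised,table:local_eq}. Your computation \((\xi|_Y)^{2}=D^{2}+\mathcal{E}^{2}=-c_2(S)\) is correct and is a nice numerical shadow. But the decisive step of your argument has a genuine gap, and it is exactly the step you defer as ``the endgame''. You deduce \(\lambda i\leq j\,(A\cdot F_{c_0})\) from the statement that a nonzero section of a degree-\(d\) line bundle on \(F_{c_0}\) has zero-scheme of degree \(d\); this presupposes that the restriction of your global section to \(F_{c_0}\) is not identically zero. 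For sections of \(\mathcal{O}_S(iD+jA)\otimes\overline{\mathcal{I}_\Gamma^{\,i}}\) that is precisely what fails: the required valuation \(\geq\lambda i\) at \(s_0\) is most cheaply supplied by the section containing the full fibre \(S_{c_0}\) with multiplicity \(O(i)\) (each copy of the fibre lies in \(\mathcal{I}_{\Gamma,s_0}\), so \(i\) copies lie in \(\mathcal{I}_{\Gamma,s_0}^{\,i}\)), and the class \(iD+jA\) has room for \(O(i)\) such vertical copies because \(D\equiv\sum_c\widetilde{S_c}-\delta(f)F\) has \emph{positive} intersection with an ample class in general. So your accounting ``available degree \(O(j)\) versus forced vanishing \(\Omega(i)\)'' is numerically wrong: both sides are \(O(i)\), and the whole content of the lemma is the sharp comparison between the per-copy gain in the valuation and the small coefficients of \(\widetilde{S_c}\) together with \(\delta(f)>0\). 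That sharp comparison is what the paper's proof actually carries out, checking via \cref{table:local_eq} that the local equation of \(k\widetilde{S_{f(s)}}\) never kills \(\mathcal{I}_{\Gamma,s}^{k}\).

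Two further structural differences are worth noting. First, the paper splits by genus: for \(g(C)\geq 1\) the class \(\xi|_Y\) is visibly anti-effective, and pseudo-effectivity would force \(\xi|_Y\equiv 0\), hence \(\Gamma=\emptyset\), contradicting \(\chi(\mathcal{O}_S)>0\) by \cref{lem:BHPV_chi_0} --- no section estimate is needed there. Second, in the remaining case \(C\simeq\mathbb{P}^1\) the paper does \emph{not} use the perturbed criterion of \cref{lem:pe_nonvanishing}: it invokes \cite[Corollaries~3.13 and 3.19]{HP20}, by which pseudo-effectivity of \(\xi|_Y\) is equivalent to the nonvanishing of \(H^{0}(S,\mathcal{I}_\Gamma^{k}\otimes\mathcal{O}_S(-k(f^{*}K_C+E)))\) for some \(k\), with \emph{no} ample twist. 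This matters: without the \(jA\)-term the class \(k(\sum_c\widetilde{S_c}-\delta(f)F)\) is vertical, so any effective divisor in it is a combination of fibre components, and Zariski-lemma bookkeeping plus the local equations close the argument. Your choice of criterion, with the ample perturbation, destroys verticality (horizontal components can pass through \(s_0\)) and makes the repair strictly harder. To fix your proof you would either need to import the unperturbed criterion from \cite{HP20}, or carry out the peeling-off induction (subtracting fibre components and tracking the residual ideal condition) that your sketch omits --- at which point you would be reconstructing the paper's computation with \(\widetilde{S_c}\) and \cref{table:local_eq}.
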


\begin{proof}
	By \cref{note:fibration} (9), \(\xi|_Y\sim -\Exc(\pi|_Y)-(\pi|_Y)^{*}(f^{*}K_C+E)\).
	If the genus \(g=g(C)\geq 1\), then \(f^{*}K_C+E\equiv(2g-2)F+E\ge 0\) is effective.
	Since \(F\) is free and \((\pi|_Y)^{*}E\) is the sum of the proper transform and some \((\pi|_Y)\)-exceptional curve,
	it follows that \((\pi|_Y)^{*}(f^{*}K_C+E)\) is effective.
	In particular, \(\xi|_Y\) is anti-pseudo-effective.
	Suppose that \(\xi|_Y\) is pseudo-effective.
	Then we have \(\xi|_Y\equiv 0\), which in turn implies  \(g(C)=1\) and \(\Exc(\pi|_Y)=0\).
	In other words, \(\Gamma=\emptyset\), i.e., the reduced structure of every fibre is smooth.
	This leads to a contradiction to \(\chi(S,\mathcal{O}_S)=c_2(S)/12>0\) (cf.~\cref{lem:BHPV_chi_0}).

	In the following, we may assume that \(C\simeq \mathbb{P}^1\).
	By \cite[Corollaries~3.13 and 3.19]{HP20},
	\(\xi|_Y\) being not pseudo-effective is equivalent to the vanishing
	\[
		H^{0}(S,\mathcal{I}_{\Gamma}^k\otimes\mathcal{O}_{S}(-k(f^{*}K_C+E))) =
		H^{0}(S,\mathcal{I}_{\Gamma}^k\otimes\mathcal{O}_{S}(k(K_S-f^{*}K_C-E-K_S))) = 0
	\]
	for all \(k\in \mathbb{N}\).
	Since \(C\simeq \mathbb{P}^1\),
	by the canonical bundle formula,
	\(K_S-f^{*}K_C-E\sim c_2(S)F/12-E_0\).
	Note that \(K_S=\delta(f)F\) with \(\delta(f)>0\) (cf.~\cref{note:kappa_1} (4)).
	Since \(f\) is isotrivial, by \cref{lem:singular_fibre_isotrivial}, the only multiple fibres are of Type \(_m\)\Rmnum{1}\(_0\),
	and hence has Euler number zero.
	Then
	\[
		\frac{c_2(S)}{12}F-E_0=\sum_{i\in I} \left(1-\left(1-\frac{e(S_{f(E_i)})}{12}\right) \nu_i\right) E_i.
	\]
	Now we obtain
	\[
		K_S-f^{*}K_C-E\sim_{\mathbb{Q}}
		\sum_{i\in I} \left(1-\left(1-\frac{e(S_{f(E_i)})}{12}\right) \nu_i\right) E_i=\sum_{c\in D}\widetilde{S_c}.
	\]
	Here, the set \(D\subseteq C\) consists of the point \(c\)
	such that the reduction of the scheme theoretic fibre \(S_c\) is not an elliptic curve.
	We are left to show that for every \(s\in \Gamma\) and every \(k\in\mathbb{N}\),
	the local equation of \(k\widetilde{S_{f(s)}}\) does not kill the ideal sheaf \(\mathcal{I}_{\Gamma,s}^k\).
	By \cref{lem:singular_fibre_isotrivial},
	the non-multiple fibres are of Kodaira's type
	\Rmnum{2}, \Rmnum{3}, \Rmnum{4},
	\Rmnum{1}\(_{0}^{*}\), \Rmnum{2}\(^{*}\), \Rmnum{3}\(^{*}\) and \Rmnum{4}\(^{*}\).
	Now the proof is finished by \cref{rmk:normalised,table:local_eq}.
\end{proof}

\begin{lemma}\label{lem:T_pe_YY_not}
	If \(T_S\) is pseudo-effective and \(Y|_Y\) is not pseudo-effective,
	then \(c_2(S)=0\).
\end{lemma}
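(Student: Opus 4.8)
The plan is to argue by contradiction. Since $c_2(S)=12\chi(\mathcal{O}_S)\ge 0$ by \cref{lem:BHPV_chi_0}, it suffices to rule out $c_2(S)>0$; so assume that $T_S$ (equivalently $\xi$) is pseudo-effective, that $Y|_Y$ is not pseudo-effective, and that $c_2(S)>0$. First I would note that \cref{prop:non_iso_non_pe} forces $f$ to be isotrivial, since otherwise $T_S$ could not be pseudo-effective; consequently \cref{lem:xi_to_Y_nonpe} applies and tells us that $\xi|_Y$ is not pseudo-effective. Next, as in the proof of \cref{prop:non_iso_non_pe}, write the divisorial Zariski decomposition $\xi\equiv\sum_i a_iY_i+P$ with $a_i>0$, $Y_i$ prime, and $P$ modified nef (so $P|_D$ is pseudo-effective for every prime divisor $D$). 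I claim that $Y$ is one of the $Y_i$, say $Y=Y_1$: if it were not, then restricting the decomposition to $Y$ would express $\xi|_Y$ as a sum of the effective divisors $Y_i|_Y$ and the pseudo-effective divisor $P|_Y$, hence $\xi|_Y$ would be pseudo-effective, contradicting the preceding step.

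The heart of the argument is to pin down the coefficient $a_1$ from two sides. For the lower bound, I would restrict both the relation $\xi-Y\sim\pi^{*}T_{S/C}$ from \cref{note:fibration}(10) and the Zariski decomposition to the surface $Y$. Writing $Q\coloneqq\sum_{i\ge 2}a_i(Y_i|_Y)+P|_Y$, which is pseudo-effective, this gives $(\pi|_Y)^{*}T_{S/C}\equiv(a_1-1)Y|_Y+Q$. Since $-T_{S/C}$ is pseudo-effective by \cref{lem:anti_pe}, the class $(\pi|_Y)^{*}T_{S/C}$ is anti-pseudo-effective, so $(1-a_1)Y|_Y$ is pseudo-effective; as $Y|_Y$ is not pseudo-effective by hypothesis, this forces $a_1\ge 1$. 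For the upper bound, I would restrict the Zariski decomposition to a general fibre $W_F=\pi^{-1}(F)$. Here isotriviality makes the Kodaira--Spencer class vanish, so $T_S|_F\cong\mathcal{O}_F^{\oplus 2}$ and $W_F\cong F\times\mathbb{P}^1$, whose pseudo-effective cone is spanned by the two rulings; moreover $\xi|_{W_F}=C_F$ lies on an extremal ray. Since the vertical components among the $Y_i$ meet $W_F$ trivially while every remaining summand is effective or movable, comparing coordinates in this two-dimensional cone should yield $a_1\le 1$. Hence $a_1=1$.

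Finally, feeding $a_1=1$ back into the restriction to $Y$ collapses the identity to $Q\equiv(\pi|_Y)^{*}T_{S/C}$; the right-hand side is anti-pseudo-effective and the left-hand side is pseudo-effective, so both are numerically trivial. As $\pi|_Y$ is birational, this gives $T_{S/C}\equiv 0$ on $S$. But \cref{lem:anti_pe} identifies $-T_{S/C}\equiv\sum_c\widetilde{S_c}$, and when $c_2(S)>0$ in the isotrivial case at least one singular fibre is of additive type, so its normalised fibre is a nonzero effective $\mathbb{Q}$-divisor (\cref{lem:singular_fibre_isotrivial} and \cref{rmk:normalised}); a nonzero effective $\mathbb{Q}$-divisor on the surface $S$ cannot be numerically trivial, which is the desired contradiction, and therefore $c_2(S)=0$.

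I expect the two-sided control of $a_1$ to be the main obstacle. The lower bound $a_1\ge 1$ hinges delicately on combining the hypothesis that $Y|_Y$ is not pseudo-effective with the pseudo-effectivity of $-T_{S/C}$ from \cref{lem:anti_pe}, while the upper bound $a_1\le 1$ relies on the explicit product structure of $W_F$ that only isotriviality provides; it is precisely the resulting exact equality $a_1=1$ that drives the numerical-triviality conclusion $T_{S/C}\equiv 0$ and hence the final contradiction.
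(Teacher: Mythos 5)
Your proof is correct, and its skeleton coincides with the paper's: isotriviality via \cref{prop:non_iso_non_pe}, non-pseudo-effectivity of \(\xi|_Y\) via \cref{lem:BHPV_chi_0,lem:xi_to_Y_nonpe}, the divisorial Zariski decomposition with \(Y\) forced into the negative part, and \cref{lem:anti_pe} as the engine. Your lower bound \(a_1\geq 1\) is exactly the paper's case \(c<1\), where \((1-c)Y|_Y\) is exhibited as pseudo-effective against the hypothesis. The one genuine structural difference is your upper bound \(a_1\leq 1\) via the restriction to \(W_F\cong F\times\mathbb{P}^1\): it is valid (the pseudo-effective cone of \(F\times\mathbb{P}^1\) is spanned by the two rulings, \(\xi|_{W_F}\equiv C_F\) lies on one of them, and intersecting with the other ruling bounds the coefficient), but it is redundant. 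Once \(a_1\geq 1\), the decomposition immediately shows \(\xi-Y\equiv\pi^{*}T_{S/C}\) is pseudo-effective on \(W\); combined with the anti-pseudo-effectivity from \cref{lem:anti_pe} this already gives \(T_{S/C}\equiv 0\) without ever restricting to \(Y\) or to a fibre --- this is the paper's case \(c\geq 1\), so the two cases \(a_1<1\) and \(a_1\geq 1\) each terminate directly. Your final contradiction (the nonzero effective \(\mathbb{Q}\)-divisor \(\sum_c\widetilde{S_c}\equiv -T_{S/C}\) cannot be numerically trivial) is an equivalent substitute for the paper's appeal to Zariski's lemma, which instead notes \(E_0\equiv \frac{c_2(S)}{12}F\) forces \(E_0=0\) since \(E_0^2<0\) whenever \(E_0\neq 0\) while \(F\) is free; both are sound, and if anything your version makes the role of the additive fibre types from \cref{lem:singular_fibre_isotrivial,rmk:normalised} more explicit.
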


\begin{proof}
	By \cref{prop:non_iso_non_pe}, our \(f\) is isotrivial.
	Suppose to the contrary that \(c_2(S)>0\).
	Applying \cref{lem:BHPV_chi_0,lem:xi_to_Y_nonpe}, we see that \(\xi|_Y\) is not pseudo-effective.
	By the divisorial Zariski decomposition (cf.~\cite[Theorem~3.12]{Bou04}),
	there exists \(c>0\) such that
	\begin{align}\label{eq:zdz_xi_cy}
		\xi\equiv cY+\sum a_iY_i+P,
	\end{align}
	where \(a_i>0\), \(Y_i\,(\neq Y)\) are prime divisors
	and \(P\) is a modified nef \(\mathbb{R}\)-divisor (cf.~\cite[Proposition 2.4]{Bou04}).
	Suppose first that \(c\geq 1\).
	Then \(\xi-Y\) is pseudo-effective.
	Together with \cref{lem:anti_pe},
	we have \(\xi\equiv Y\) and \(E_{0}\equiv c_2(S)F\)
	which implies that \(c_2(S)=0\),
	noting that \(E_{0}^2<0\) once it is non-zero  (cf.~\cite[\Rmnum{3}, (8.2)~Lemma]{BHPV04}),
	while \(F\) is free.
	Suppose now that \(c<1\).
	Then we have
	\[
		(1-c)Y\equiv(\xi-cY)-\pi^{*}T_{S/C}.
	\]
	Restricting to \(Y\) itself, we have
	\[
		(1-c)Y|_{Y}\equiv (\xi-cY)|_{Y}-\pi^{*}T_{S/C}|_{Y},
	\]
	where \((\xi-cY)|_{Y}\) is pseudo-effective by \cref{eq:zdz_xi_cy}.
	Since \(-T_{S/C}\) is pseudo-effective (cf.~\cref{lem:anti_pe}),
	it follows that \(-\pi^{*}T_{S/C}|_{Y}\) is also pseudo-effective, noting that \(Y\) is horizontal and irreducible.
	In particular, \((1-c)Y|_{Y}\) is pseudo-effective.
	However, this gives rise to a contradiction to our assumption.
\end{proof}

In the view of \cref{lem:T_pe_YY_not}, we are left to show that  \(Y|_Y\) is not pseudo-effective.
For the convenience of our later use, we formulate the following lemma which is a direct consequence of Zariski's lemma.

\begin{lemma}\label{lem:part_fibre_pe}
	Let \(f\colon S\to C\) be a fibration from a smooth projective surface to a smooth curve.
	Let
	\[
		S_c\coloneqq f^{-1}(c)=\sum_{i_c\in I_c}m_{i_c}E_{i_c}+\sum_{j_c\in J_c} n_{j_c}F_{j_c}
	\]
	be the fibre of \(f\) over \(c\in C\) where \(E_{i_c}\) and \(F_{j_c}\) are (distinct) irreducible components,
	and \(m_{i_c}\) and \(n_{j_c}\) are the corresponding multiplicities in \(S_c\).
	Then
	\[
		M\coloneqq\sum_{c\in D}\Big(\sum_{i_c\in I_c} a_{i_c}E_{i_c}-\sum_{j_c\in J_c} b_{j_c}F_{j_c}\Big)
	\]
	is not pseudo-effective for
	any non-empty finite set \(D\subset C\) and
	any rational numbers \(a_{i_c}\geq 0\), \(b_{j_c}>0\) satisfying the following two conditions:
	\begin{enumerate}
		\item there is at least one \(c\in D\) such that \(J_c\neq\emptyset\);
		\item for each \(c\in D\), if \(J_c=\emptyset\),
		      then there exists some \(a_{i_c}=0\) with \(i_c\in I_c\).
	\end{enumerate}

	In particular,
	\(M-\delta F\) is not pseudo-effective
	for any rational number \(\delta>0\)
	if \(M\) satisfies condition (2),
	where \(F\) is a general fibre of \(f\).
\end{lemma}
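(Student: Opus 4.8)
The plan is to argue by contradiction, using the classical Zariski decomposition on the surface together with the negative semi-definiteness of the intersection form on the components of a fibre (\cite[\Rmnum{3}, (8.2)~Lemma]{BHPV04}). Suppose \(M\) is pseudo-effective and write its Zariski decomposition \(M=P+N\), where \(P\) is nef and \(N\geq 0\) is the negative part, whose support has negative definite intersection matrix. Since \(M\) is vertical, \(M\cdot F=0\) for a general fibre \(F\); as \(P\cdot F\geq 0\) and \(N\cdot F\geq 0\) sum to zero, both vanish. Then \(N\cdot F=0\) with \(N\) effective and \(F\) free forces every component of \(N\) to be \(f\)-vertical, and hence \(P=M-N\) is vertical as well.

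Next I would pin down the positive part \(P\). Being nef and vertical, \(P\) satisfies \(P^2\geq 0\), while Zariski's lemma gives \(P^2\leq 0\); hence \(P^2=0\). Writing \(P=\sum_c P_c\) with \(P_c\) supported on \(S_c\), the identity \(\sum_c P_c^2=P^2=0\) together with \(P_c^2\leq 0\) forces \(P_c^2=0\) for each \(c\), so by the equality case of Zariski's lemma \(P_c=r_c S_c\) for some \(r_c\in\mathbb{Q}\). A short check—using that \(N\) cannot contain a whole fibre, since \(S_c^2=0\) would violate negative definiteness—shows \(r_c=0\) whenever \(c\notin D\). Therefore \(P=\sum_{c\in D}r_c S_c\equiv\lambda F\) with \(\lambda=\sum_{c\in D}r_c\), and intersecting with an ample divisor yields \(\lambda\geq 0\).

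The heart of the argument, and the step where both hypotheses enter, is the sign analysis of the \(r_c\). For each \(c\in D\) the effective divisor \(N_c\coloneqq M_c-r_c S_c\geq 0\) must have non-negative coefficients on every component. Reading off the coefficient of an \(F_{j_c}\) (present exactly when \(J_c\neq\emptyset\)) gives \(b_{j_c}+r_c n_{j_c}\leq 0\), so \(r_c<0\); reading off the coefficient of a component \(E_{i_c}\) with \(a_{i_c}=0\)—guaranteed by condition (2) when \(J_c=\emptyset\)—gives \(r_c\leq 0\). Thus \(r_c\leq 0\) for all \(c\in D\), and condition (1) supplies at least one \(c\) with \(r_c<0\), whence \(\lambda<0\), contradicting \(\lambda\geq 0\). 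This proves \(M\) is not pseudo-effective. I expect the main obstacle to be the identification of \(P\) as a combination of whole fibres and the bookkeeping that makes condition (2)—which prevents any fibre from contributing a strictly positive \(r_c\)—do its job.

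Finally, for the ``in particular'' statement I would reduce to the main assertion. Fixing \(c_0\in D\) and using \(F\equiv S_{c_0}\), the class \(M-\delta F\) is numerically equal to \(M-\delta S_{c_0}\). Subtracting \(\delta S_{c_0}=\delta\big(\sum_i m_{i_{c_0}}E_{i_{c_0}}+\sum_j n_{j_{c_0}}F_{j_{c_0}}\big)\) turns the coefficient of any component \(E_{i_{c_0}}\) with \(a_{i_{c_0}}=0\) (which exists by condition (2) if \(J_{c_0}=\emptyset\)) into \(-\delta m_{i_{c_0}}<0\), and leaves any already-negative \(F\)-coefficient negative; hence \(M-\delta S_{c_0}\) satisfies condition (1), while condition (2) persists on the unchanged fibres and becomes vacuous on \(S_{c_0}\). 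Applying the main assertion to \(M-\delta S_{c_0}\) and invoking the numerical invariance of pseudo-effectiveness then completes the proof.
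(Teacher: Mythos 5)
Your proof is correct, but it runs along a genuinely different track from the paper's, even though both arguments rest on the Fujita--Zariski decomposition and on Zariski's lemma \cite[\Rmnum{3}, (8.2)~Lemma]{BHPV04}. The paper never identifies the nef part: writing \(M\equiv P+N\) with \(N=N_0+\sum d_{i_c}E_{i_c}\), it moves all negative-coefficient terms to the right-hand side so as to obtain an equivalence of the shape \(\sum a'_{i_c}E_{i_c}\equiv P+N_0+(\text{effective terms sharing no component with the left side})\), and then intersects both sides with the left-hand divisor: the left square is strictly negative by Zariski's lemma --- conditions (1) and (2) enter exactly to guarantee that the left side is nonzero and, fibre by fibre, never a positive rational multiple of the full fibre --- while the pairing with the right side is \(\geq 0\), giving a contradiction in a single intersection computation, with no need to know that \(P\) or \(N\) is vertical. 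You instead determine the decomposition completely: \(M\cdot F=0\) forces \(N\), and hence \(P\), to be vertical; nefness plus Zariski's lemma gives \(P^2=0\); and the equality case (the radical of the intersection form on the components of a connected fibre is spanned by the fibre --- here the hypothesis that \(f\) is a fibration, i.e.\ has connected fibres, is genuinely used) yields \(P=\sum_c r_cS_c\equiv\lambda F\). Conditions (1) and (2) then enter through a coefficient-wise sign analysis of \(N_c=M_c-r_cS_c\geq 0\), forcing \(\lambda<0\) against the nefness bound \(\lambda\geq 0\); note your analysis works uniformly whether or not \(N_c=0\), so no case is lost. Your route is longer but more structural --- it shows that the nef part of any such vertical pseudo-effective class is a multiple of a fibre, which is reusable information --- whereas the paper's regrouping trick is shorter and bypasses any classification of \(P\). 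Finally, your treatment of the ``in particular'' clause (replacing \(\delta F\) by \(\delta S_{c_0}\) for a fixed \(c_0\in D\), observing that the subtraction creates condition (1) on the fibre over \(c_0\) while condition (2) persists on the unchanged fibres and becomes vacuous over \(c_0\)) makes explicit precisely the verification the paper dismisses as ``not hard to see'', and it is correct.
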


\begin{proof}
	Suppose to the contrary that \(M\) is pseudo-effective.
	Let
	\[
		M\equiv P+N=P+N_0+\sum_{c\in D}\sum_{i_c\in I_c} d_{i_c}E_{i_c}
	\]
	be the Zariski decomposition,
	where \(P\) is nef,
	\(N\) is effective (\(d_{i_c}\geq 0\)) such that \(N_0\)
	and \(E_{i_c}\)'s do not have any common components.
	Then we have
	\[
		\sum_{c\in D}\sum_{i_c\in I_{c,1}} a_{i_c}'E_{i_c} =
		P+N_0+\sum_{c\in D}\Big(\sum_{i_c\in I_{c,2}} d_{i_c}'E_{i_c}+\sum_{j_c\in J_c}b_{j_c}F_{j_c}\Big)
	\]
	with \(a_{i_c}\geq a_{i_c}'>0\),
	\(d_{i_c}'>0\), \(I_{c,1}\cap I_{c,2}=\emptyset\)
	and \(I_{c,1}\cup I_{c,2}\subseteq I_c\)
	for each \(c\in D\).
	Note that there exists at least one, say \(c'\in D\),
	such that \(I_{c',1}\neq\emptyset\),
	since the RHS is not trivial by (1).
	Take the intersection with
	\[
		\sum_{c\in D}\sum_{i_c\in I_{c,1}} a_{i_c}'E_{i_c}
	\]
	on both sides.
	Then we get the contradiction by Zariski's lemma (cf.~\cite[\Rmnum{3}, (8.2)~Lemma]{BHPV04}):
	\[
		\begin{gathered}
			\Big(\sum_{c\in D}\sum_{i_c\in I_{c,1}} a_{i_c}'E_{i,c}\Big)^2<0, \\
			\Big(P+N_0+\sum_{c\in D}\Big(\sum_{i_c\in I_{c,2}} d_{i_c}'E_{i_c}+\sum_{j_c\in J_c}b_{j_c}F_{j_c}\Big)\Big)
			\Big(\sum_{c\in D}\sum_{i_c\in I_{c,1}} a_{i_c}'E_{i,c}\Big)\geq 0,
		\end{gathered}
	\]
	noting that \(\sum_{i_{c}\in I_{c,1}} a_{i_{c}}'E_{i_{c}}\) is never a (positive) rational multiple of \(S_{c}\) by condition (2).

	For the last assertion,
	note that \(F\) is numerically equivalent to any fibre \(S_c\).
	It is not hard to see that \(M-\delta F\)
	satisfies both (1) and (2) whenever \(M\) satisfies (2).
\end{proof}

Now we come to the last part of the proof of the isotrivial case for \cref{thm:main_kappa_1}.

\begin{lemma}\label{lem:YY_non_pef}
	Suppose that \(f\) is isotrivial.
	If \(c_2(S)>0\),
	then \(Y|_{Y}\) is not pseudo-effective.
\end{lemma}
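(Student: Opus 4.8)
The plan is to follow the strategy of \cref{lem:xi_to_Y_nonpe}, with the crucial new input being \cref{lem:part_fibre_pe}, which is forced upon us by the fact that passing from \(\xi|_Y\) to \(Y|_Y\) doubles the coefficient of the normalised fibres. Writing \(g\coloneqq\pi|_Y\colon Y\to S\) for the blow-up along \(\mathcal I_\Gamma\) and combining \(\xi|_Y=-\Exc(g)+g^{*}(-f^{*}K_C-E)\) (\cref{note:fibration} (9)) with the identity \(Y|_Y=\xi|_Y-g^{*}T_{S/C}\) and \(-T_{S/C}\equiv\sum_c\widetilde{S_c}\) (\cref{lem:num_linear_equiv_Y,lem:anti_pe}), the canonical bundle formula yields the numerical identity
\[
 Y|_Y\equiv-\Exc(g)+g^{*}\Big(2\sum_{c}\widetilde{S_c}-K_S\Big),\qquad K_S\equiv\delta(f)F,\quad\delta(f)>0,
\]
where \(K_S\equiv\delta(f)F\) holds for every base \(C\). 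Thus \(\mathcal O_Y(Y|_Y)\) is the tautological class of the twist \(\mathbb P_S(\mathcal I_\Gamma\otimes\mathcal O_S(2\sum_c\widetilde{S_c}-K_S))\cong Y\), and, exactly as in \cref{lem:xi_to_Y_nonpe}, \cite[Corollaries~3.13 and 3.19]{HP20} reduce the assertion to the vanishing
\[
 H^{0}\Big(S,\ \mathcal I_\Gamma^{\,k}\otimes\mathcal O_S\big(k\big(2\textstyle\sum_c\widetilde{S_c}-\delta(f)F\big)\big)\Big)=0\quad\text{for every }k\in\mathbb N,
\]
noting that \(\Gamma\neq\emptyset\), since \(c_2(S)>0\) means \(f\) is not almost smooth (\cref{lem:BHPV_chi_0}).

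Suppose such a section existed. It would produce an effective divisor \(\Delta\equiv 2k\sum_c\widetilde{S_c}-k\delta(f)F\); since \(\Delta\cdot F=0\) it is vertical, and since it lies in \(\mathcal I_\Gamma^{\,k}\) its local equation at each \(s\in\Gamma\) belongs to \(\mathcal I_{\Gamma,s}^{k}\). Grouping \(\Delta=\sum_c\Delta_c\) by fibres and using Zariski's lemma (the radical of the intersection form on a fibre is spanned by the fibre class), I would write \(\Delta_c\equiv 2k\widetilde{S_c}+\lambda_cS_c\) for rational numbers \(\lambda_c\), whence \(\sum_c\lambda_c=-k\delta(f)<0\). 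The goal is then to contradict this by showing \(\lambda_c\geq 0\) for every \(c\) — precisely the kind of fibre-positivity packaged by \cref{lem:part_fibre_pe}.

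First I would dispose of the \emph{absolutely} obstructed types: for Kodaira Types \Rmnum{3} and \Rmnum{4} the components through the singular point are tangent smooth branches, resp.\ lines, whose local equations have \(\mathcal I_{\Gamma,s}\)-adic order \(0\) (read off from \cref{table:local_eq}), so no positive combination of them can lie in \(\mathcal I_{\Gamma,s}^{k}\) and no \(\Delta\) exists. For the remaining admissible types (Type \Rmnum{2} and the starred types \Rmnum{1}\(_0^{*}\),\Rmnum{2}\(^{*}\),\Rmnum{3}\(^{*}\),\Rmnum{4}\(^{*}\), by \cref{lem:singular_fibre_isotrivial}) I would feed the coefficients of \(2\widetilde{S_c}\) recorded in \cref{rmk:normalised} into the membership condition for \(\mathcal I_{\Gamma,s}^{k}\): the cusp (Type \Rmnum{2}) forces the coefficient of the unique component to be \(\geq k\), and each node \(e_i\cap e_j\) of a starred fibre (where \(\mathcal I_{\Gamma,s}=\mathfrak m_s\)) forces \(\operatorname{mult}_s\Delta_c=a_i+a_j\geq k\). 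A short computation per type — comparing these lower bounds against the doubled normalised coefficients, the binding node being the one of largest total multiplicity — shows \(\lambda_c\geq 0\) in every case, contradicting \(\sum_c\lambda_c=-k\delta(f)<0\).

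The main obstacle is exactly this last, type-by-type bookkeeping. Because the coefficients of \(\widetilde{S_c}\) are doubled, the numerical class \(2\sum_c\widetilde{S_c}-\delta(f)F\) can itself be big (for instance a positive multiple of \(F\) when every singular fibre is cuspidal), so neither a numerical argument nor the purely local estimate that sufficed for \(\xi|_Y\) in \cref{lem:xi_to_Y_nonpe} closes the gap. One genuinely needs the interaction between the forced vanishing along \(\Gamma\) and the positivity \(\delta(f)>0\) coming from \(\kappa(S)=1\), organised through \cref{lem:part_fibre_pe}.
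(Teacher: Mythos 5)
Your global mechanism is genuinely different from the paper's: the paper never passes through an \(H^0\)-vanishing statement on \(S\), but instead stays on the surface \(Y\), computes the coefficients of the pullback \((\pi|_Y)^{*}\widetilde{S_c}\) along the exceptional components (including the delicate non-reduced blow-ups, cf.~\cref{rmk:type_4}), observes that each singular fibre contributes an exceptional component of coefficient \(\leq 1/2\), and applies \cref{lem:part_fibre_pe} directly to the numerical class \(Y|_Y\) — a purely numerical argument via Zariski decomposition on \(Y\), which automatically handles the ``limit of effective classes'' nature of pseudo-effectivity. Your route (effective \(\Delta\equiv k(2\sum_c\widetilde{S_c}-\delta(f)F)\), Zariski's lemma giving \(\Delta_c=2k\widetilde{S_c}+\lambda_cS_c\), ideal-membership bounds forcing \(\lambda_c\geq 0\), against \(\sum_c\lambda_c=-k\delta(f)<0\)) is a viable alternative skeleton — you cite \cref{lem:part_fibre_pe} but in fact replace it by this membership bookkeeping. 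However, the two places where you commit to concrete local statements are both wrong. For Types \Rmnum{3} and \Rmnum{4} it is false that ``no positive combination can lie in \(\mathcal{I}_{\Gamma,s}^{k}\)'': the individual branches have \(\mathcal{I}\)-order \(0\), but their product does not, e.g.\ for Type \Rmnum{4} one has \((y-x)(y-\zeta x)(y-\zeta^2x)=y^3-x^3=y\cdot y^2-x\cdot x^2\in(x^2,y^2)=\mathcal{I}_{\Gamma,s}\), and \((y^3-x^3)^{\mu}\in\mathcal{I}_{\Gamma,s}^{k}\) as soon as \(\mu\geq 2k/3\); so these types are not ``absolutely obstructed'' and must be run through the same order computation as the rest. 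Likewise for Type \Rmnum{2} the forced bound is not \(\mu\geq k\): \((y^2-x^3)^2\in(x^2,y)^3\), and the correct condition is \(\lfloor 3\mu/2\rfloor\geq k\), i.e.\ \(\mu\geq\lceil 2k/3\rceil\). Redoing the bookkeeping correctly, one gets \(\lambda_c\geq k/3\) (Type \Rmnum{2}), \(\lambda_c\geq k/4\) (Type \Rmnum{3}, from \(\lfloor 4\mu/3\rfloor\geq k\) versus the doubled coefficient \(k/2\)), \(\lambda_c\geq 0\) (Type \Rmnum{4} — \emph{exactly} borderline, since the forced order \(2k/3\) equals the doubled normalised coefficient \(2k\cdot\frac13\)), and for the starred types effectivity alone forces \(\lambda_c\geq 0\) because \(\widetilde{S_c}\) has coefficient \(0\) on the component of maximal multiplicity. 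So your conclusion survives, but only after repairing both of your stated computations, and the Type \Rmnum{4} case shows the margin is zero — the ``short computation per type'' you defer is precisely where the proof lives, and the parts of it you did write down are incorrect.

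There is a second gap in the reduction itself. You invoke \cite[Corollaries~3.13 and 3.19]{HP20} ``exactly as in \cref{lem:xi_to_Y_nonpe}'', but the paper uses that criterion only in the case \(C\simeq\mathbb{P}^1\); in the present lemma \(g(C)\) is arbitrary, and for \(g(C)\geq 1\) the vanishing of \(H^0(S,\mathcal{I}_\Gamma^k\otimes L^{\otimes k})\) for the single representative \(L=K_S-2f^{*}K_C-2E\) of the numerical class does not by itself disprove pseudo-effectivity of \(Y|_Y\sim-\Exc(\pi|_Y)+(\pi|_Y)^{*}L\): one must rule out sections for \emph{every} line bundle numerically equivalent to \(L^{\otimes k}\) (non-trivial since \(q(S)=g(C)>0\)) and absorb the ample twist implicit in the definition of pseudo-effectivity (cf.~\cref{lem:pe_nonvanishing}). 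Your argument can in fact be made robust here — the decomposition \(\Delta_c=2k\widetilde{S_c}+\lambda_cS_c\) and the membership bounds use only the numerical class of \(\Delta\), and the uniform slack \(\delta(f)>0\) survives an \(\mathcal{O}(j/i)\)-perturbation by an ample twist — but none of this is addressed in the proposal, whereas the paper's numerical argument on \(Y\) sidesteps the issue entirely.
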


\begin{proof}
	Since \(Y\equiv \xi+\pi^{*}(c_2(S)F/12 - E_0)\) (cf.~\cref{lem:num_linear_equiv_Y}),
	we need to calculate
	\[
		\Big(\xi + \pi^{*}\Big(\frac{c_2(S)}{12}F - E_0\Big)\Big)|_Y=\xi|_Y+(\pi|_Y)^{*}\Big(\frac{c_2(S)}{12}F-E_0\Big)
	\]
	and show that it is not pseudo-effective on \(Y\).
	Recall that \(Y\)
	is isomorphic to the blow-up of the ideal sheaf \(\mathcal{I}_{\Gamma}\).
	By \cref{note:fibration} (9), \(\mathcal{O}_{W}(1)|_{Y}\simeq\mathcal{O}_{Y}(1)\simeq\mathcal{O}_{Y}(-E_{Y})\otimes (\pi|_Y)^{*}\mathcal{O}_S(-f^{*}K_C-E)\),
	where \(E_Y\) is the exceptional divisor of \(\pi|_Y\colon Y\to S\).
	In other words, \(\xi|_Y\sim -E_Y-(\pi|_Y)^{*}(f^{*}K_C+E)\).
	Therefore,
	\begin{align}\label{eq:Y_res_to_Y}
		Y|_Y & \equiv \Big(\xi + \pi^{*}\Big(\frac{c_2(S)}{12}F - E_0\Big)\Big)|_{Y} \nonumber                                     \\
		     & \sim -E_Y-(\pi|_Y)^{*}(f^{*}K_C+E)+(\pi|_Y)^{*}\Big(\frac{c_2(S)}{12}F-E_{0}\Big) \nonumber                         \\
		     & = -E_Y + (\pi|_Y)^{*}\Big(\Big(\frac{c_2(S)}{12}-(2g(C)-2)-\sum_{j\in J}(1-\frac{1}{m_j})\Big)F-2E_0\Big) \nonumber \\
		     & = -E_Y + (\pi|_Y)^{*}\Big(2\sum_{c\in C} \widetilde{S_c}-\delta(f)F\Big) \nonumber
	\end{align}
	where the last equality is due to \cref{lem:anti_pe} and \(\delta(f)>0\) is defined in \cref{note:kappa_1} (4).
	We use \cref{rmk:normalised} and the notations therein.
	The pullback \((\pi|_Y)^{*}\widetilde{S_c}\) via the blow-up \(\pi|_Y\) equals
	\begin{align*}
		(\pi|_Y)_{*}^{-1}(\widetilde{S_c}) +
		\begin{cases}
			\frac{1}{3}Y_1                                                                                                                      & \text{\Rmnum{2}}       \\
			\frac{1}{2}Y_{1,2}                                                                                                                  & \text{\Rmnum{3}}       \\
			\frac{1}{2}Y_{1,2,3}                                                                                                                & \text{\Rmnum{4}}       \\
			\frac{1}{2}(Y_{1,2}+Y_{1,3}+Y_{1,4}+Y_{1,5})                                                                                        & \text{\Rmnum{1}}_0^{*} \\
			\frac{3}{2}Y_{1,2}+\frac{7}{6}Y_{2,3}+\frac{5}{6}Y_{3,4}+\frac{1}{2}(Y_{4,5}+Y_{6,7})+\frac{1}{3}Y_{6,8}+\frac{1}{6}Y_{5,6}+Y_{8,9} & \text{\Rmnum{2}}^{*}   \\
			\frac{5}{4}(Y_{1,2}+Y_{7,8}) + \frac{3}{4}(Y_{2,3} + Y_{6,7}) + \frac{1}{2}Y_{4,5} + \frac{1}{4}(Y_{3,4} + Y_{4,6})                 & \text{\Rmnum{3}}^{*}   \\
			Y_{1,2} + Y_{3,4} + Y_{5,6} + \frac{1}{3}(Y_{2,7} + Y_{4,7} + Y_{6,7})                                                              & \text{\Rmnum{4}}^{*}
		\end{cases}
	\end{align*}
	where \((\pi|_Y)_{*}^{-1}(\widetilde{S_c})\) is the proper transform
	and \(Y_{i,j}\) is the exceptional divisor over \(e_i\cap e_j\) (scheme-theoretically).
	\begin{remark}
		We remind readers that,
		when the blown-up point \(s\) lies in the fibre of
		Types \Rmnum{1}\(_0^{*}\), \Rmnum{2}\(^{*}\), \Rmnum{3}\(^{*}\) or \Rmnum{4}\(^{*}\),
		the ideal sheaf \(\mathcal{I}_{\Gamma,s}\) is reduced and the corresponding blow-up is the usual one;
		hence the coefficient of \((\pi|_Y)^{*}\widetilde{S_c}\) along \(Y_{i,j}\) is
		simply the sum of the coefficients of \(\widetilde{S_c}\) along \(e_i\) and \(e_j\).
		However, the blown-up point in the fibre of
		Types \Rmnum{2}, \Rmnum{3} or \Rmnum{4} is non-reduced (cf.~\cref{table:local_eq}),
		in which cases, the coefficient of \((\pi|_Y)^{*}\widetilde{S_c}\) along \(Y_{i,j}\) is a bit more involved.
		We refer to \cref{rmk:type_4} for the explicit calculation on the case of Type \(\text{\Rmnum{4}}\)
		for the convenience of the reader but skip other cases for the organisation of our paper.
	\end{remark}
	Now, we come back to the proof of our lemma.
	By the above list, within each singular fibre \(S_c\),
	there exists a pair of indices \(i_1\neq i_2\in I\) such that \(e_{i_1}\cap e_{i_2}\neq \emptyset\) and the coefficient
	\[
		\coeff_{(\pi|_Y)^{*}(\widetilde{S_c})} Y_{i_1,i_2} \leq \frac{1}{2}.
	\]
	Therefore,
	\[
		2(\pi|_Y)^{*}\widetilde{S_c} - \sum_{s\in \Gamma\cap S_c} Y_{s}
	\]
	is a linear combination of some components of the fibre of \(Y\to C\) over \(c\in C\)
	satisfying (2) of \cref{lem:part_fibre_pe}.
	Noting that \(E_Y=\sum_{s\in \Gamma}Y_s\)
	and \(\delta(f)>0\),
	we see that the divisor
	\[
		Y|_Y \equiv \xi|_Y+(\pi|_Y)^{*}\Big(\frac{c_2(S)}{12}F-E_0\Big)
        \sim_{\mathbb{Q}} \sum_{c\in C} \Big(-\sum_{s\in\Gamma\cap S_c} Y_{s}+2(\pi|_Y)^{*}\widetilde{S_c}\Big) - \delta(f)(\pi|_Y)^{*}F
	\]
	is not pseudo-effective by \cref{lem:part_fibre_pe}.
	Our lemma is thus proved.
\end{proof}

\begin{example}\label{rmk:type_4}
	In this example, we calculate the blow-up of the non-reduced point in the fibre
	of Type \(\text{\Rmnum{4}}\) (cf.~\cref{table:local_eq}).
	Let \(f(x,y)=(y-x)(y-\zeta x)(y-\zeta^2x)=y^3-x^3\) be the union of three lines on \(\mathbb{A}^2\),
	where \(\zeta=\exp(2i\pi/3)\) is the primitive root of unity.
	Then the partial derivatives \(f_x=-3x^2\) and \(f_y=3y^2\).
	So the blown-up ideal is \(\mathcal{I}=(x^2,y^2)\) (cf.~\cite[Proof of Proposition 3.1]{Ser96}).
	Let \([a:b]\) be the homogeneous coordinates of \(\mathbb{P}^1\).
	Then the blow-up \(\pi\) of \(\mathcal{I}\) is defined by \(y^2a-x^2b=0\) in \(\mathbb{A}^2\times\mathbb{P}^1\).
	Consider the pullback of \(L\), \(L_1\) and \(L+L_1\),
	where \(L\coloneqq \{y-x=0\}\) is a component of \(f(x,y)=0\),
	and \(L_1\coloneqq \{y+x=0)\}\).
	These pullbacks are defined by
	\[
		(\pi|_Y)^{*}L:
		\begin{cases}
			y^2a-x^2b=0, \\
			y-x=0,
		\end{cases}
		\quad
		(\pi|_Y)^{*}L_1:
		\begin{cases}
			y^2a-x^2b=0, \\
			y+x=0,
		\end{cases}
		\text{and} \quad
		(\pi|_Y)^{*}(L+L_1):
		\begin{cases}
			y^2a-x^2b=0, \\
			y^2-x^2=0.
		\end{cases}
	\]
	Their proper transforms are
	\[
		\widetilde{L}:
		\begin{cases}
			y^2a-x^2b=0, \\
			a-b=0,       \\
			y-x=0,
		\end{cases}
		\widetilde{L_1}:
		\begin{cases}
			y^2a-x^2b=0, \\
			a-b=0,       \\
			y+x=0,
		\end{cases}
		\text{and} \quad
		\widetilde{L}+\widetilde{L_1}:
		\begin{cases}
			y^2a-x^2b=0, \\
			a-b=0,
		\end{cases}
	\]
	Note that \(\widetilde{L}+\widetilde{L_1}\)
	is a Cartier divisor on the blow-up of \(\mathcal{I}\).
	The \(\pi\)-exceptional divisor \(E\) is defined by
	\[
		\begin{cases}
			y^2a-x^2b=0, \\
			x^2=0,       \\
			y^2=0,
		\end{cases}
		\quad \text{or just} \quad
		\begin{cases}
			y^2a-x^2b=0, \\
			x^2=0,
		\end{cases}
		\text{on the affine chart } a\neq 0.
	\]
	On the affine chart \(a\neq 0\) (equivalently, \(a=1\)),
	we may calculate the lengths:
	\begin{align*}
		 & \mathbb{C}[x,y,b]/(y^2-x^2,1-b,y-x,x^2) \simeq \mathbb{C}[x]/(x^2) \quad \text{has length } 2;   \\
		 & \mathbb{C}[x,y,b]/(y^2-x^2,1-b,y+x,x^2) \simeq \mathbb{C}[x]/(x^2) \quad \text{has length } 2;   \\
		 & \mathbb{C}[x,y,b]/(y^2-x^2,1-b,x^2) \simeq \mathbb{C}[x,y]/(x^2,y^2) \quad \text{has length } 4.
	\end{align*}
	By \cite[Proposition~7.1]{Ful98}, we have
	\[
		\widetilde{L}.E\leq 2, \quad
		\widetilde{L_1}.E\leq 2, \quad
		(\widetilde{L}+\widetilde{L_1}).E=4.
	\]
	Therefore, \(\widetilde{L}.E=\widetilde{L_1}.E=2\).
	Assume the pullback of \(L\) is \(\pi^{*}L=\widetilde{L}+tE\).
	Then it follows from the projection formula that
	\[
		0=(\widetilde{L}+tE)\cdot E=2+tE^2.
	\]
	By \cite[Page 79, the paragraph before Example 4.3.1]{Ful98},
	the anti-self-intersection \(-E^2\) is the (Samuel) multiplicity of the blown-up point, which is \(4\).
	Therefore, we have \(t=1/2\)
	and hence \(\pi^{*}L=L+1/2E\).
\end{example}


\begin{corollary}\label{cor:iso_pe_t}
	Suppose that \(f\) is isotrivial.
	If \(T_S\) is pseudo-effective,
	then \(c_2(S)=0\).
\end{corollary}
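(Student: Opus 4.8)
The plan is to argue by contradiction, combining the two technical lemmas just established. First I would record that by Noether's formula (see \cref{note:kappa_1} (3)) we have $c_2(S)=12\chi(\mathcal{O}_S)$, and that \cref{lem:BHPV_chi_0} guarantees $\chi(S,\mathcal{O}_S)\geq 0$ for any relatively minimal elliptic fibration. Thus $c_2(S)\geq 0$ always holds, and proving $c_2(S)=0$ is the same as ruling out the strict inequality $c_2(S)>0$.

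So suppose, toward a contradiction, that $T_S$ is pseudo-effective while $c_2(S)>0$. Since $f$ is assumed isotrivial, \cref{lem:YY_non_pef} applies and shows that $Y|_{Y}$ is not pseudo-effective. But \cref{lem:T_pe_YY_not} says exactly that whenever $T_S$ is pseudo-effective and $Y|_{Y}$ is not pseudo-effective, one must have $c_2(S)=0$. This contradicts the standing assumption $c_2(S)>0$, and the corollary follows.

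In this sense the corollary is pure bookkeeping: all of the genuine work has been front-loaded into its two inputs, and no new obstacle arises here. The hard part, already dispatched in \cref{lem:YY_non_pef}, is establishing that $Y|_{Y}$ fails to be pseudo-effective as soon as $c_2(S)>0$; that argument rests on the explicit description of the normalised fibres $\widetilde{S_c}$ together with their pullbacks under the blow-up $\pi|_Y$ (via \cref{rmk:normalised} and \cref{table:local_eq}), and on feeding the resulting divisor $-E_Y+(\pi|_Y)^{*}\bigl(2\sum_{c}\widetilde{S_c}-\delta(f)F\bigr)$ into the Zariski-lemma criterion of \cref{lem:part_fibre_pe}. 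The corollary itself merely plugs this non-pseudo-effectiveness into the dichotomy supplied by \cref{lem:T_pe_YY_not}, so the only thing to verify is that the two lemmas' hypotheses match up, which they do once $c_2(S)\geq 0$ is used to reduce to the case $c_2(S)>0$.
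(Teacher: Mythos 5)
Your proof is correct and is essentially identical to the paper's own argument: assume \(c_2(S)>0\), invoke \cref{lem:YY_non_pef} (using isotriviality) to conclude \(Y|_Y\) is not pseudo-effective, then apply \cref{lem:T_pe_YY_not} to force \(c_2(S)=0\), a contradiction. Your extra remark that \(c_2(S)=12\chi(\mathcal{O}_S)\geq 0\) via \cref{lem:BHPV_chi_0} just makes explicit a reduction the paper leaves implicit.
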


\begin{proof}
	Assume that \(c_2(S)>0\).
	Then \(Y|_{Y}\) is not pseudo-effective by \cref{lem:YY_non_pef}.
	On the other hand, by \cref{lem:T_pe_YY_not},
	we have \(c_2(S)=0\), a contradiction.
\end{proof}

\begin{proof}[Proof of \cref{thm:main_kappa_1}]
	The equivalence of \((2)\) and \((3)\) follows from \cref{lem:BHPV_chi_0}.
	The implication \((2)\Rightarrow (1)\) is proved in  \cref{lem:num_linear_equiv_Y}.
	The implication \((1)\Rightarrow (2)\) follows from \cref{prop:non_iso_non_pe} and \cref{cor:iso_pe_t}.
\end{proof}

In the last part of this section, we study the Kodaira dimension \(\kappa(\mathbb{P}(T_S),\mathcal{O}(1))\).

\begin{lemma}\label{lem:pe_non_neg_kappa}
	If \(T_S\) is pseudo-effective, then \(\mathcal{O}_{\mathbb{P}(T_S)}(1)\) is
	\(\mathbb{Q}\)-linearly equivalent to an effective divisor.
	In particular, \(\kappa(\mathbb{P}(T_S),\mathcal{O}(1))\geq 0\).
\end{lemma}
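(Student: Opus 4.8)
The plan is to reduce to a product surface by a finite \'etale cover and then descend the Kodaira dimension via \cref{lem:ueno-kappa}. Since $T_S$ is assumed pseudo-effective, \cref{thm:main_kappa_1} gives $c_2(S)=0$; together with \cref{prop:non_iso_non_pe} and \cref{lem:BHPV_chi_0} this forces $f$ to be isotrivial and almost smooth, so all of its singular fibres are multiples of smooth elliptic curves. Such a relatively minimal isotrivial elliptic surface is an elliptic quasi-bundle: it can be written as $S\cong (E\times B)/G$ for a finite group $G$ acting diagonally and freely, namely by translations on the elliptic factor $E$ (these act freely, giving the fibres of type $_m\textup{\Rmnum{1}}_0$) and through a $G$-Galois cover $B\to C$. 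Consequently the projection $\sigma\colon S'\coloneqq E\times B\to S$ is \'etale, the elliptic fibration lifts to the second projection $p_B\colon E\times B\to B$, and $f\circ\sigma=\tau\circ p_B$ for the induced finite cover $\tau\colon B\to C$. Because $\sigma$ is \'etale we have $\kappa(S')=\kappa(S)=1$, so $B$ has genus $\geq 2$.

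On the product $S'=E\times B$ one has $T_{S'}=p_E^{*}T_E\oplus p_B^{*}T_B=\mathcal{O}_{S'}\oplus p_B^{*}T_B$, so the $\mathcal{O}_{S'}$-summand yields $H^0(S',T_{S'})\supseteq H^0(S',\mathcal{O}_{S'})\neq 0$, i.e.\ a nonzero section of $\mathcal{O}_{\mathbb{P}(T_{S'})}(1)$; hence $\kappa(\mathbb{P}(T_{S'}),\mathcal{O}(1))\geq 0$. Since $\sigma$ is \'etale, it induces a finite surjective morphism $\widetilde{\sigma}\colon\mathbb{P}(T_{S'})=\mathbb{P}(\sigma^{*}T_S)\to\mathbb{P}(T_S)$ with $\widetilde{\sigma}^{*}\mathcal{O}_{\mathbb{P}(T_S)}(1)=\mathcal{O}_{\mathbb{P}(T_{S'})}(1)$. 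Applying \cref{lem:ueno-kappa} to $\widetilde{\sigma}$ gives $\kappa(\mathbb{P}(T_S),\mathcal{O}(1))=\kappa(\mathbb{P}(T_{S'}),\mathcal{O}(1))\geq 0$, which is exactly the assertion that $\mathcal{O}_{\mathbb{P}(T_S)}(1)$ is $\mathbb{Q}$-linearly equivalent to an effective divisor.

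A more hands-on route, which also isolates the crux, avoids naming the cover: by \cref{lem:num_linear_equiv_Y} we have $\xi\sim Y+\pi^{*}T_{S/C}$ with $Y$ an effective prime divisor, so it suffices to prove $\kappa(S,T_{S/C})\geq 0$. Since $f$ is almost smooth we have $E_0=0$, and the canonical bundle formula gives $T_{S/C}\sim -f^{*}L$ with $L\coloneqq(R^1f_{*}\mathcal{O}_S)^{\vee}$ of degree $c_2(S)/12=0$; pushing forward and using \cref{lem:ueno-kappa} once more, $\kappa(S,-f^{*}L)=\kappa(C,-L)$, which is $\geq 0$ precisely when the degree-zero class $L$ is torsion in $\mathrm{Pic}(C)$. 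The main obstacle is exactly this torsion statement, equivalently the existence of the \'etale product cover above: it is what fails in the non-isotrivial setting (where $E_0\neq 0$ and $-T_{S/C}$ carries a genuine negative Zariski part), while here it is forced by the finiteness of the monodromy of an isotrivial almost smooth elliptic fibration. I would therefore spend the bulk of the argument pinning down the presentation $S\cong(E\times B)/G$ (or, equivalently, the torsion of $L$) and verifying the compatibility $f\circ\sigma=\tau\circ p_B$, which is what lets me identify $\sigma^{*}T_{S/C}$ with the trivial relative tangent bundle $T_{S'/B}=p_E^{*}T_E=\mathcal{O}_{S'}$.
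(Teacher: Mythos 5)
Your proposal is correct, but it routes the crux differently from the paper. The paper's actual proof is precisely your ``hands-on'' second sketch, completed by a citation: from \(\xi\sim Y+\pi^{*}T_{S/C}\), \(c_2(S)=0\) and \(E_0=0\) it computes \(T_{S/C}=-f^{*}(f_{*}\omega_{S/C})\) with \(\deg(f_{*}\omega_{S/C})=\chi(\mathcal{O}_S)=0\), and then invokes \cite[\Rmnum{3}, (18.3)~Proposition]{BHPV04} to conclude that \(f_{*}\omega_{S/C}\) is torsion, whence \(\xi\sim_{\mathbb{Q}}Y\) is effective; so you have correctly isolated the torsion of \(L=(R^1f_{*}\mathcal{O}_S)^{\vee}\) as the only real issue. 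Your primary route resolves that issue structurally instead, via a finite \'etale cover \(\sigma\colon E\times B\to S\), the splitting \(T_{E\times B}=\mathcal{O}\oplus p_B^{*}T_B\), and \cref{lem:ueno-kappa} applied to the induced finite map of projectivised bundles --- this is sound, and it is not circular, since the needed structure statement is exactly the paper's \cref{cor:fibred_surf}, whose proof (via \cite[Lemma~1.1.9]{GMM21} and \cite[Corollary~26.5]{Har10}) does not use the present lemma; citing it would spare you the bulk of the work you plan. One inaccuracy to repair: it is not true that the whole group \(G\) can be taken to act by translations on the elliptic factor \(E\); only the stabilisers of points of \(B\) must act by translations (producing the \(_{m}\textup{\Rmnum{1}}_0\) fibres), while an element may act on \(E\) with fixed points --- e.g.\ as \(-\mathrm{id}\) --- provided it acts freely on \(B\), and such surfaces (with monodromy \(-\mathrm{id}\)) admit no translations-only presentation. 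This does not damage your argument, because you only use freeness of the diagonal action (for \(\sigma\) \'etale) and the product structure of the cover, both of which survive. As for trade-offs: the paper's route yields the stronger conclusion \(T_{S/C}\sim_{\mathbb{Q}}0\), hence the explicit relation \(\xi\sim_{\mathbb{Q}}Y\) reused later, at the cost of a black-box citation; your route is more self-contained conceptually (torsion forced by finite monodromy) but leans on the classification of almost-smooth isotrivial fibrations.
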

\begin{proof}
	By \cref{lem:num_linear_equiv_Y},
	we know that \(\xi\sim Y+\pi^{*}T_{S/C}\)
	and \(T_{S/C}\equiv E_0-\frac{1}{12}c_2(S)F\).
	Since \(T_S\) is pseudo-effective,
	applying \cref{thm:main_kappa_1},
	we have \(c_2(S)=0\) and hence \(E_0=0\);
	in particular, \(T_{S/C}\equiv 0\).
	We shall show that \(T_{S/C}\sim_{\mathbb{Q}}0\).
	Indeed, in the view of \cref{note:fibration} (7), \cref{note:kappa_1} (3) and Serre duality, we have
	\[
		T_{S/C}=-K_S+f^{*}K_C+\sum(m_j-1)F_j+ E_0=-f^{*}((R^1f_{*}\mathcal{O}_S)^{\vee})=-f^{*}(f_{*}\omega_{S/C}).
	\]
	Since \(\deg(f_{*}\omega_{S/C})=\chi(S,\mathcal{O}_S)=(c_1(S)^2+c_2(S))/12=0\),
	by \cite[\Rmnum{3}, (18.3)~Proposition]{BHPV04}, \(f_{*}\omega_{S/C}\)
	and hence \(T_{S/C}\) are \(\mathbb{Q}\)-trivial.
	Consequently, \(\xi\sim_{\mathbb{Q}} Y\) and thus \(\kappa(\mathbb{P}(T_S),\mathcal{O}(1))\geq 0\).
\end{proof}

\begin{lemma}\label{lem:iitake_dim_locally_trivial}
	Let \(f\colon S\to C\) be a locally trivial elliptic fibration
	over a smooth projective curve \(C\) of genus \(g=g(C)\geq 1\).
	Then \(\kappa(\mathbb{P}(T_S),\mathcal{O}(1))=1-\kappa(C)\).
\end{lemma}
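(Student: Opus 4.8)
The plan is to reduce the computation of \(\kappa(\mathbb{P}(T_S),\mathcal{O}(1))=\kappa(W,\xi)\) to the growth of \(h^0(S,\Sym^m T_S)\) and then to bound this growth from both sides. Since \(f\) is locally trivial it is a smooth morphism, so there are no multiple or non-reduced fibres; thus \(E=0\) and \(\Gamma=\emptyset\) in \cref{note:fibration}, the divisor \(Y\) is a smooth section of \(\pi\) isomorphic to \(S\), and the Euler number is multiplicative, giving \(c_2(S)=e(S)=e(C)\,e(F)=0\) because \(e(F)=0\) for an elliptic curve. In particular \(T_S\) is pseudo-effective by \cref{lem:num_linear_equiv_Y}. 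Using \(\pi_{*}\mathcal{O}_W(m)=\Sym^m T_S\) and the projection formula one has \(H^0(W,m\xi)=H^0(S,\Sym^m T_S)\), so \(\kappa(W,\xi)\) is governed by the polynomial growth of \(h^0(S,\Sym^m T_S)\) in \(m\).

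For the upper bound I would use the sequence \(0\to T_{S/C}\to T_S\to f^{*}T_C\to 0\), which is exact on the right since \(f\) is smooth. The canonical bundle formula together with \(E=0\) identifies \(T_{S/C}=-f^{*}(f_{*}\omega_{S/C})=f^{*}(-L)\), where \(L\coloneqq f_{*}\omega_{S/C}\) has \(\deg L=\chi(\mathcal{O}_S)=0\) (Noether's formula, using \(c_1(S)^2=c_2(S)=0\)), while \(f^{*}T_C=f^{*}(-K_C)\). Taking symmetric powers produces a filtration of \(\Sym^m T_S\) with graded pieces \(f^{*}(-jL-(m-j)K_C)\) for \(0\le j\le m\), whence, by the projection formula \((f_{*}\mathcal{O}_S=\mathcal{O}_C)\),
\[
	h^0(S,\Sym^m T_S)\le\sum_{j=0}^{m}h^0\big(C,\,-jL-(m-j)K_C\big),
\]
and the \(j\)-th summand has degree \(-(m-j)(2g-2)\).

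When \(g\ge 2\), every summand with \(j<m\) has negative degree and so vanishes, while the \(j=m\) term satisfies \(h^0(C,-mL)\le 1\) since \(\deg(-mL)=0\); hence \(h^0(S,\Sym^m T_S)\le 1\) and \(\kappa(W,\xi)\le 0\). Here \(\delta(f)=2g-2>0\), so \(\kappa(S)=1\) and the standing assumptions of \cref{note:kappa_1} hold, giving \(\kappa(W,\xi)\ge 0\) by \cref{lem:pe_non_neg_kappa}. Therefore \(\kappa(W,\xi)=0=1-\kappa(C)\).

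When \(g=1\) we have \(\kappa(C)=0\) and want \(\kappa(W,\xi)=1\); here the filtration only yields \(\kappa(W,\xi)\le 1\), and the matching lower bound is the real difficulty. My plan is to bypass a direct section count: since \(\delta(f)=\chi(\mathcal{O}_S)+(2g(C)-2)=0\) we get \(\kappa(S)\ne 1\), and as \(K_S\) is nef (so \(\kappa(S)\ge 0\)) with \(S\) relatively minimal elliptic, \(\kappa(S)=0\). Because \(T_S\) is pseudo-effective, \cref{prop:kappa_0_pe_minimal} and \cref{lem:minimal_kappa_0} show that \(S\) is a finite \'etale quotient of an abelian surface and, crucially, that \(\kappa(\mathbb{P}(T_S),\mathcal{O}(1))=1=1-\kappa(C)\). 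The main obstacle is precisely this genus-one lower bound: since \(T_{S/C}\) is numerically trivial, the graded pieces above are (torsion) line bundles pulled back from \(C\), and one cannot read off \(\sim m\) independent sections directly from the filtration without controlling how the extension classes assemble. Reducing to the abelian surface \(A\), where \(T_A=\mathcal{O}_A^{\oplus 2}\) makes the linear growth \(h^0(A,\Sym^m T_A)=m+1\) transparent, together with \cref{lem:ueno-kappa}, is what makes this step clean.
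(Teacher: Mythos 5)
Your proposal is correct and follows essentially the same route as the paper: an upper bound from the symmetric powers of \(0\to T_{S/C}\to T_S\to f^{*}T_C\to 0\), the lower bound for \(g\geq 2\) from pseudo-effectivity via \cref{lem:pe_non_neg_kappa}, and the \(g=1\) case settled by \(\delta(f)=0\Rightarrow\kappa(S)=0\) and reduction to an abelian surface where \(h^0(A,\Sym^mT_A)=m+1\), via \cref{lem:ueno-kappa} and \cref{lem:minimal_kappa_0}. The only cosmetic difference is that you keep the degree-zero bundle \(L=f_{*}\omega_{S/C}\) in the filtration and use \(h^0(C,-mL)\leq 1\), whereas the paper first passes to an \'etale cover trivialising \(T_{S/C}\) and then runs the same estimate.
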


\begin{proof}
	Since \(\chi(S,\mathcal{O}_S)=0\),
	after replacing \(S\) by a further \'etale cover,
	we may assume that the relative tangent bundle \(T_{S/C}\) is trivial
	(cf.~\cite[\Rmnum{3}, (18.3)~Proposition]{BHPV04} and \cref{lem:ueno-kappa}).
	We consider the symmetric power of the following short exact sequence
	\[
		0\to \mathcal{O}_S\to T_{S}\to f^{*}T_C\to 0,
	\]
	noting that \(T_{S/C}=\mathcal{O}_S\) by our reduction.
	For each positive integer \(m\), we have
	\[
		0\to \Sym^{m-1}T_S\to\Sym^mT_S\to f^{*}(-mK_C)\to 0.
	\]
	Hence, we have
	\[
		h^0(S,\Sym^mT_S)\leq 1+\sum_{i=1}^m h^0(S,f^{*}(-iK_C)).
	\]
	In particular, if \(g(C)\geq 2\), then \(h^0(S,\Sym^mT_S)\leq 1\) for any \(m\).
	On the other hand, since \(f\) is smooth, it follows from \cref{thm:main_kappa_1} that \(T_S\) is pseudo-effective.
	Together with \cref{lem:pe_non_neg_kappa}, we have  \(h^0(S,\Sym^mT_S)=1\) for any \(m\) whenever \(g(C)\geq 2\).
	As a result, \(\kappa(\mathbb{P}(T_S),\mathcal{O}(1))=0\) when \(g(C)\geq 2\).

	If \(g(C)=1\), then by the vanishing of \(\delta(f)\) (cf.~\cref{note:kappa_1} (4)), we have \(\kappa(S)=0\).
	Since \(S\) is also minimal, our lemma follows from \cref{lem:minimal_kappa_0}.
\end{proof}

As a corollary, we show our main theorem when \(S\) is assumed to be minimal.

\begin{corollary}\label{cor:fibred_surf}
	Let \(S\) be a smooth minimal projective surface, i.e., \(K_S\) is nef.
	Then the tangent bundle \(T_S\) is pseudo-effective if and only if
	the second Chern class \(c_2(S)=0\).
	Moreover, after a Galois \'etale cover,
	\(S\) admits a locally trivial elliptic fibration onto a smooth projective curve \(C\) with \(\kappa(S)=\kappa(C)\).
	In particular, after a further \'etale cover,
	\(S\) is either an abelian surface or isomorphic to a product of an elliptic curve and a smooth curve of genus \(\ge 2\).
\end{corollary}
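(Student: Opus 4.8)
The plan is to establish the equivalence by a case analysis on $\kappa(S)\in\{0,1,2\}$, which is exhaustive since $K_S$ nef forces $\kappa(S)\geq 0$ by abundance for surfaces. If $\kappa(S)=2$, then $S$ is of general type, so $c_2(S)>0$ (as recalled in the introduction) and $T_S$ is not pseudo-effective by \cref{prop:kappa_2_tx}; both conditions fail, so this case never occurs under $c_2(S)=0$. If $\kappa(S)=0$, the Enriques--Kodaira classification leaves Enriques, K3, bi-elliptic and abelian surfaces, of which exactly the bi-elliptic and abelian ones satisfy $c_2(S)=0$; these are precisely the \textit{Q}-abelian surfaces, so both the equivalence and the pseudo-effectiveness follow from \cref{lem:minimal_kappa_0}. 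If $\kappa(S)=1$, the Iitaka fibration $f\colon S\to C$ is a relatively minimal elliptic fibration (as $S$ is minimal), and \cref{thm:main_kappa_1} gives that $T_S$ is pseudo-effective if and only if $c_2(S)=0$. This proves the first assertion and shows that $c_2(S)=0$ forces $\kappa(S)\in\{0,1\}$.

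For the structure statement I would assume $c_2(S)=0$ and dispose of the two remaining Kodaira dimensions. When $\kappa(S)=0$, the surface is \textit{Q}-abelian and its defining abelian cover $A\to S$ is a Galois étale cover with $A$ abelian, which is the ``abelian surface'' alternative; in the bi-elliptic sub-case the Albanese map is in addition a locally trivial elliptic fibration onto an elliptic curve, matching $\kappa(C)=0=\kappa(S)$. When $\kappa(S)=1$, combining \cref{thm:main_kappa_1} with \cref{lem:BHPV_chi_0} shows that $f\colon S\to C$ is isotrivial, almost smooth and has $\chi(\mathcal{O}_S)=0$, so its only singular fibres are multiples of smooth elliptic curves.

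The core step is to remove these multiple fibres by a Galois étale cover. Since $f$ is almost smooth, each multiple fibre $m_jF_j$ is tame, so a base change $C'\to C$ totally ramified to order $m_j$ over each $f(F_j)$ yields, after normalisation, a smooth elliptic surface $S'\to C'$ whose structure morphism to $S$ is étale; as $f$ is moreover isotrivial with no further singular fibres, $S'\to C'$ is locally trivial. Then \cref{lem:iitake_dim_locally_trivial} gives $\kappa(\mathbb{P}(T_{S'}),\mathcal{O}(1))=1-\kappa(C')$, while the canonical bundle formula (with $\chi(\mathcal{O}_{S'})=0$ and no multiple fibres) gives $K_{S'}\equiv f^{*}K_{C'}$, so that $\kappa(C')=\kappa(S')=\kappa(S)=1$ and hence $g(C')\geq 2$. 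Finally, invoking the structure of smooth isotrivial fibrations (cf.~\cite[Section~3]{Ser96}), $S'\to C'$ has the form $(E\times B)/G$ for a finite group $G$ acting freely and diagonally, so the étale cover $E\times B\to S'$ produces the required product $E\times F$ with $F=B$ of genus $\geq 2$.

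The hard part will be the base change removing the multiple fibres: one must verify that the normalised fibre product $S'$ is smooth and, crucially, that $S'\to S$ is étale, which is exactly where the almost-smoothness (the tameness of the multiple fibres) enters. A secondary difficulty is the last reduction to an honest product, where one has to control the finite structure group $G$ of the isotrivial bundle and check that its diagonal action on $E\times B$ is free, so that $E\times B\to S'$ is genuinely étale.
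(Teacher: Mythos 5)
Your route is essentially the paper's: the equivalence is proved by exactly the same case division via \cref{lem:minimal_kappa_0}, \cref{prop:kappa_2_tx} and \cref{thm:main_kappa_1}, and the structure statement by the same strategy of a Galois \'etale cover induced by a ramified base change \(g\colon C'\to C\) eliminating the multiple fibres, followed by a genus count and a final passage to a product. (Your treatment of \(\kappa(S)=0\) directly through the \textit{Q}-abelian description is in fact slightly cleaner than the paper's uniform ``let \(f\colon S\to C\) be an elliptic fibration'', which tacitly presupposes a fibration that a simple abelian surface does not have.) The two black boxes you sketch instead of cite are precisely the paper's references: the base-change construction is \cite[Lemma~1.1.9]{GMM21}, and the final product structure is \cite[Corollary~26.5]{Har10}.

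The one genuine gap is at the step you call the hard part, but it lies earlier than where you locate it: before worrying whether the normalised fibre product is smooth and \'etale over \(S\), you must show that a cover \(C'\to C\) ``totally ramified to order \(m_j\) over each \(f(F_j)\)'' exists at all. For \(C\simeq\mathbb{P}^1\) this can fail: an orbifold \((\mathbb{P}^1;\,m_1)\) or \((\mathbb{P}^1;\,m_1,m_2)\) with \(m_1\neq m_2\) admits no such (Galois) cover, so your construction as stated breaks for small configurations of multiple fibres. The paper closes this with one observation: since \(\delta(f)\geq 0\) (and \(\delta(f)>0\) when \(\kappa(S)=1\), cf.~\cref{note:kappa_1}~(4) and \cite[V, (12.5)~Proposition]{BHPV04}), on \(C\simeq\mathbb{P}^1\) one has \(\sum_j(1-m_j^{-1})\geq 2\), hence at least three multiple fibres, so the orbifold is good and the Galois cover exists; only then does \cite[Lemma~1.1.9]{GMM21} apply. (Tameness is automatic over \(\mathbb{C}\) and does no work here.) A secondary point in your last step: to write \(S'\cong (E\times B)/G\) with \(G\) finite you need that the locally trivial elliptic bundle has finite structure group after a finite cover (Serrano's isotrivial structure theorem, or the paper's citation), and freeness of the \(G\)-action on \(B\) should be deduced from the absence of multiple fibres of \(f'\); with these two verifications supplied, your argument goes through and agrees with the paper's.
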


\begin{proof}
	The first statement follows from \cref{lem:minimal_kappa_0,prop:kappa_2_tx,thm:main_kappa_1}.
	Let \(f\colon S\to C\) be an elliptic fibration.
	Note that \(\kappa(S)=0\) or \(1\) and \(c_2(S)=0\) if and only if the only singular fibres of \(f\) are multiple of elliptic curves.
	If \(C\simeq \mathbb{P}^1\),
	then \(f\) has at least three multiple fibres,
	since \(\delta(f)\geq 0\) by \cite[V, (12.5)~Proposition]{BHPV04}.
	Applying \cite[Lemma~1.1.9]{GMM21},
	we get a finite Galois \'etale map \(g_S\colon S'\to S\)
	which is induced by a ramified base change \(g\colon C'\to C\)
	and \(f'\colon S'\to C'\)
	has no multiple fibre.
	That is,
	\(f'\) is a locally trivial elliptic fibration.
	Note that when \(\kappa(S)=1\),
	one has \(\delta(f')>0\) and hence \(g(C')\geq 2\);
	when \(\kappa(S)=0\), one has \(0\leq \kappa(C')\leq \kappa(S')=0\) and hence \(\kappa(C')=0\).
	Finally, the last part of our corollary follows from \cite[Corollary 26.5]{Har10}.
\end{proof}

\section{Proof of \texorpdfstring{\cref{mainthm:pe_iitaka_fib}}{Theorem}}\label{sec-pf-main}

In this section, we prove \cref{mainthm:pe_iitaka_fib}.
In the view of  \cref{prop:kappa_0_pe_minimal,prop:kappa_2_tx,thm:main_kappa_1},
our main task left is to exclude the non-minimal surface
which is of Kodaira dimension one and has pseudo-effective tangent bundle.
The following theorem is our main result of this section.
\begin{theorem}\label{thm:minimal}
	Let \(S\) be a smooth projective surface of \(\kappa(S)=1\).
	If \(T_S\) is pseudo-effective, then \(K_S\) is nef, i.e., \(S\) is minimal.
\end{theorem}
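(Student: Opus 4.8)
The plan is to argue by contradiction, reducing to a single blow-up of a product and then showing that the pseudo-effectivity of \(T_S\) is obstructed by the negative part of a divisorial Zariski decomposition on \(\mathbb{P}(T_S)\), in the spirit of \cref{lem:xi_to_Y_nonpe,lem:T_pe_YY_not}.

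First I would set up the reduction. Suppose \(S\) is not minimal and \(T_S\) is pseudo-effective. Let \(\mu\colon S\to S_{\mathrm{m}}\) be the morphism to the minimal model; by \cref{lem:bir_descend} the bundle \(T_{S_{\mathrm{m}}}\) is again pseudo-effective, so \cref{thm:main_kappa_1} forces \(c_2(S_{\mathrm{m}})=0\) and the elliptic fibration on \(S_{\mathrm{m}}\) to be almost smooth. Applying \cref{cor:fibred_surf} and passing to a finite \'etale cover (pulling back the blow-up structure by fibre product, which preserves pseudo-effectivity of the tangent bundle by \cref{lem:bir_descend}), I may assume \(S_{\mathrm{m}}=E\times F\) with \(g(F)\ge 2\), and then, replacing \(S\) by the first blow-up in a factorisation of \(S\to E\times F\) into point blow-ups and invoking \cref{lem:bir_descend} once more, that \(S=\Bl_p(E\times F)\) is a single blow-up with exceptional \((-1)\)-curve \(D\). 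Write \(f\colon S\to C=F\) for the induced elliptic fibration; its only non-smooth fibre is the nodal curve \(\widetilde{E}+D\) over \(c_0:=f(p)\), so \(\Gamma\) consists of the single node \(q=\widetilde{E}\cap D\) and \(Y=\mathbb{P}(\mathcal{I}_\Gamma)\) is the blow-up of \(S\) at \(q\).

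Next I would record the key numerical input. Since every fibre of \(f\) is reduced, the formula \(T_{S/C}=-K_S+f^{*}K_C+E\) of \cref{note:fibration} simplifies, using \(K_S=\mu^{*}K_{E\times F}+D\) and \(\mu^{*}K_{E\times F}=f^{*}K_F\), to \(T_{S/C}\sim -D\); hence \cref{note:fibration}(10) gives \(\xi\sim Y+\pi^{*}T_{S/C}\sim Y-\pi^{*}D\). I then want to show that neither \(\xi|_Y\) nor \(Y|_Y\) is pseudo-effective. For the first, \cref{note:fibration}(9) yields \(\xi|_Y\sim -\Exc(\pi|_Y)-(\pi|_Y)^{*}(f^{*}K_F)\), which is anti-effective and non-zero because \(f^{*}K_F\) is effective of positive degree (here \(g(F)\ge 2\) is essential), so \(\xi|_Y\) is not pseudo-effective. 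For the second, I compute \(Y|_Y\equiv\xi|_Y+(\pi|_Y)^{*}D\) in terms of the three components of the fibre of \(Y\to C\) over \(c_0\); the negative multiple of \(f^{*}K_F\) dominates the effective contribution of \(D\), so every coefficient is negative and \(Y|_Y\) is again anti-effective, hence not pseudo-effective.

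Finally I would run the divisorial Zariski decomposition argument of \cref{lem:T_pe_YY_not}. Assuming \(\xi\) pseudo-effective, the non-pseudo-effectivity of \(\xi|_Y\) forces \(Y\) to occur with some coefficient \(c>0\) in the negative part, say \(\xi\equiv cY+\sum a_iY_i+P\) with \(P\) modified nef and \(Y_i\ne Y\). If \(c\ge 1\) then \(\xi-Y=\pi^{*}T_{S/C}=-\pi^{*}D\) is pseudo-effective; since \(\pi^{*}D\) is effective and the pseudo-effective cone is salient, this gives \(\pi^{*}D\equiv 0\), which is absurd. If \(0<c<1\), restricting \((1-c)Y\equiv(\xi-cY)+\pi^{*}D\) to \(Y\) shows \((1-c)Y|_Y\) is pseudo-effective, contradicting the previous step. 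Either way \(T_S\) cannot be pseudo-effective, so \(S\) is minimal. The main obstacle is that the machinery of \cref{sec:elliptic_fib} (the canonical bundle formula, \cref{lem:anti_pe}, and \cref{lem:T_pe_YY_not} itself) is built on relative minimality, which now fails; moreover the naive triple intersection numbers governing pseudo-effectivity vanish here, so a divisorial Zariski decomposition really is needed rather than a numerical computation. The saving grace is that the \'etale reduction to base genus \(\ge 2\) replaces the delicate case analysis of \cref{lem:xi_to_Y_nonpe,lem:YY_non_pef} (unavoidable when \(C\cong\mathbb{P}^1\)) by the transparent fact that \(\xi|_Y\) and \(Y|_Y\) become anti-effective, so all that must be re-verified by hand is the clean identity \(T_{S/C}\sim -D\) together with these two restriction computations.
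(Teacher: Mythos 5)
Your proof is correct and follows essentially the same route as the paper's: both reduce via \cref{lem:bir_descend} and \cref{cor:fibred_surf} to a blow-up fibred over a curve of genus \(\geq 2\), exploit the identity \(h^{*}T_{S_1/C}-T_{S_2/C}=D\) (in your product setting the sharper \(T_{S/C}\sim -D\)), apply the divisorial Zariski decomposition to \(\xi\) to bound the coefficient of \(Y\) below \(1\), and then contradict the resulting pseudo-effectivity of \(Y|_Y\) with the computation \(Y|_Y\equiv -\Exc(\pi|_Y)+(\pi|_Y)^{*}(-f^{*}K_C+D)\), which becomes anti-effective and non-zero once a fibre of \(f^{*}K_C\) is moved over \(f(D)\). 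The only deviations are cosmetic and harmless: your further reduction to a single blow-up of \(E\times F\) (the paper keeps the blow-up of the minimal model, where \(T_{S_1/C}\equiv 0\) suffices, with possibly several exceptional curves after the cover), your detour through the non-pseudo-effectivity of \(\xi|_Y\) to force \(c>0\) (redundant, since the paper's restriction argument yields the pseudo-effectivity of \(Y|_Y\) even when the coefficient of \(Y\) is zero), and one mis-citation — preservation of pseudo-effectivity under the finite \'etale pullback is not given by \cref{lem:bir_descend}, which descends along birational morphisms, but by the standard pullback fact the paper itself invokes when replacing \(S_i\) by \(S_i'\).
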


\begin{proof}
	In the view of \cref{lem:bir_descend,lem:rel_min_is_min},
	we only need to exclude the case when \(S\coloneqq S_2\) is a blow-up of
	a smooth minimal surface \(S_1\).
	Suppose to the contrary that the tangent bundle \(T_{S_2}\) is pseudo-effective.

	Let us consider the following commutative diagram as in \cref{note:elementary_transform}
	\[
		\xymatrix{
		\mathbb{P}(T_{S_1})\ar[d]^{\pi_1}
		& \mathbb{P}(h^{*}T_{S_1})\ar@{-->}[rr]\ar[d]^{\widetilde{\pi_1}}\ar[l]_{\widetilde{h}}
		& & \mathbb{P}(T_{S_2})\ar[d]^{\pi_2}\\
		S_1\ar[d]^{f_1}
		& S_2\ar[dl]^{f_2}\ar@{=}[rr]\ar[l]_{h}
		& & S_2\\
		C
		}
	\]
	where \(h\colon S_2\to S_1\) is a single blow-up between smooth projective surfaces
	with an exceptional \((-1)\)-curve \(D\),
	\(f_1\colon S_1\to C\) is an elliptic fibration onto a smooth curve \(C\),
	\(f_2\coloneqq f_1\circ h\),
	and \(\pi_i\colon \mathbb{P}(T_{S_i})\to S_i\) is the natural projection.
	Let \(\xi_i\) be the tautological divisor of the projective bundle \(\mathbb{P}(T_{S_i})\)
	and let \(Y_i\) be the divisor in \(\mathbb{P}(T_{S_i})\) defined in \cref{note:fibration} (8).
	Then we have	\(\xi_i\sim Y_i+\pi_i^{*}T_{S_i/C}\) (cf.~\cref{note:fibration} (9)).

	We apply \cref{cor:fibred_surf} to get an \'etale morphism \(S_1'\to S_1\),
	which is induced by a (possibly ramified) base change \(g\colon C'\to C\),
	such that \(S_1'\) admits a locally trivial elliptic fibration
	over a smooth curve \(C'\) of genus \(\geq 2\).
	Then we have the following commutative diagram induced by the base change
	\[
		\xymatrix{
			S_2'\ar[r]^{g_2}\ar[d]_{h'}&S_2\ar[d]^h \\
			S_1'\ar[r]^{g_1}\ar[d]_{f_1'}&S_1\ar[d]^{f_1} \\
			C'\ar[r]^g&C
		}
	\]
	where \(h'\) is the blow-up along smooth (reduced) points and \(g_2\) is \'etale.
	Then \(T_{S_2'}=g_2^{*}T_{S_2}\) is pseudo-effective (cf.~\cref{lem:ueno-kappa}).
	Replacing \(S_i\) with \(S_i'\),
	we may assume that \(f_1\) is locally trivial with \(g(C)\geq 2\).

	Since the divisor \(\xi_2\) is pseudo-effective,
	it follows from the divisorial Zariski decomposition (cf.~\cite[Theorem~3.12]{Bou04}) that
	\[
		\xi_2\equiv\sum a_i P_i+N,
	\]
	where \(a_i>0\), \(P_i\) are prime divisors and \(N\) is a modified nef \(\mathbb{R}\)-divisor.
	Since \(T_{S_2}\) is pseudo-effective,
	applying \cref{lem:bir_descend} and \cref{cor:fibred_surf},
	we know that \(c_2(S_1)=0\) (indeed, we can even make \(S_1\) a product variety);
	hence, \(T_{S_1/C}\equiv 0\)
	(cf.~\cref{lem:anti_pe,lem:num_linear_equiv_Y}).
	We calculate the difference (cf.~\cref{note:fibration} (7))
	\[
		h^{*}T_{S_1/C}-T_{S_2/C}=-h^{*}K_{S_1}+K_{S_2}+h^{*}E_{S_1}-E_{S_2}=D+h^{*}E_{S_1}-E_{S_2}=D,
	\]
	where \(E_{S_i}\) is defined in \cref{note:fibration} (5).
	Therefore, by \cref{note:fibration} (9), we have
	\[
		Y_2-\pi_2^{*}D\sim \xi_2\equiv \sum a_iP_i+N.
	\]
	Without loss of generality, we may assume that \(P_1=Y_2\) with \(a_1\) possibly being zero.
	Since \(0\neq-\pi_2^{*}D\) is anti-pseudo-effective, we see that \(0\leq a_1<1\).
	From the numerical equivalence
	\[
		(1-a_1)Y_2|_{Y_2}\equiv \sum_{i\geq 2}a_iP_i|_{Y_2}+N|_{Y_2}+(\pi_2|_{Y_2})^{*}D,
	\]
	the pseudo-effectiveness of \(Y_2|_{Y_2}\) follows, noting that \(\pi_2^*D|_{Y_2}\) is effective.
	On the other hand, we have the following equivalence (cf.~\cref{note:fibration} (9))
	\[
		Y_2|_{Y_2} \equiv (\xi_2+\pi_2^{*}D)|_{Y_2} \sim -\Exc(\pi_2|_{Y_2})+(\pi_2|_{Y_2})^{*}(-f_2^{*}K_C+D).
	\]
	Since \(g(C)\geq 2\), after moving one free fibre from \(f^{*}K_C\) to \(f_2^{-1}(f_2(D))\),
	our \(Y_2|_{Y_2}\) is anti-pseudo-effective and non-zero,
	which contradicts  \(Y_2|_{Y_2}\) being pseudo-effective.
	So we finish the proof of our theorem.
\end{proof}

\begin{proof}[Proof of \cref{mainthm:pe_iitaka_fib}]
	In the view of \cref{prop:kappa_0_pe_minimal,prop:kappa_2_tx}, we may assume that \(\kappa(S)=1\).
	Then, the first half follows from   \cref{thm:main_kappa_1,thm:minimal}.
	The second half follows from
	\cref{lem:iitake_dim_locally_trivial,lem:minimal_kappa_0,cor:fibred_surf}.
\end{proof}

\section{Ruled surface over non-rational base, \texorpdfstring{Proof of \cref{prop:uniruled_blowup}}{Proposition}}\label{sec:ruled}

In the last section, we study non-rational uniruled surfaces with pseudo-effective tangent bundles.
First, 
we show that any projective bundle (of arbitrary rank) over a smooth curve always has a pseudo-effective tangent bundle,
which slightly extends \cite{Kim22}.
\begin{proposition}[{cf.~\cite{Kim22}}]\label{prop:ruled_surfaces}
	The tangent bundle \(T_X\) of any projective
    bundle
    \(f\colon X=\mathbb{P}_{C}(\mathcal{E})\to C\) over a smooth curve \(C\) is pseudo-effective.
	In particular, \(T_{X}\) is pseudo-effective but non-big if and only if \(\mathcal{E}\) is semi-stable
	and \(C \not\simeq \mathbb{P}^1\).
\end{proposition}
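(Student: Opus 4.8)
The plan is to establish the unconditional pseudo-effectiveness of \(T_X\) first, and then read off the ``pseudo-effective but non-big'' dichotomy from it together with Kim's bigness criterion \cite{Kim22}.

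First I would pass from \(T_X\) to the relative tangent bundle. The projection \(f\colon X=\mathbb{P}_C(\mathcal{E})\to C\) is smooth, so there is the relative tangent sequence
\[
0\longrightarrow T_{X/C}\longrightarrow T_X\longrightarrow f^{*}T_C\longrightarrow 0,
\]
in which \(T_{X/C}\) is a subbundle of \(T_X\). By \cref{lem:pe_inj} it then suffices to prove that \(T_{X/C}\) is pseudo-effective. This reduction is unaffected by twisting \(\mathcal{E}\) by a line bundle on \(C\) (neither \(X\) nor \(T_{X/C}\) changes), a freedom I would use to normalise \(\mathcal{E}\).

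The core is therefore the pseudo-effectiveness of \(T_{X/C}\). In the rank-two case---the ruled surfaces of direct interest here---this is entirely explicit. Writing \(\xi=\mathcal{O}_X(1)\) and \(F\) for a fibre, the relative Euler sequence yields \(T_{X/C}\equiv 2\xi-(\deg\mathcal{E})F\); normalising \(\mathcal{E}\) so that \(\xi\) is the class \(C_0\) of a minimal section with \(\xi^{2}=\deg\mathcal{E}=:-e\), this becomes \(T_{X/C}\equiv 2C_0+eF\). When \(e\ge 0\) this is a non-negative combination of the effective curves \(C_0\) and \(F\), hence effective; when \(e<0\) (which forces \(\mathcal{E}\) semi-stable) the class \(2C_0+eF\) has self-intersection \(0\) and is nef by Hartshorne's description of \(\Nef(X)=\PE(X)\) \cite{Har77}. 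Either way \(T_{X/C}\in\PE(X)\), and \cref{lem:pe_inj} gives the claim. For arbitrary rank I would identify \(\mathbb{P}(T_{X/C})\), via the relative Euler sequence \(0\to\mathcal{O}_X\to (f^{*}\mathcal{E}^{\vee}\otimes\mathcal{O}_X(1))\to T_{X/C}\to 0\), with the relative incidence variety inside \(\mathbb{P}_C(\mathcal{E})\times_C\mathbb{P}_C(\mathcal{E}^{\vee})\); under this identification \(\mathcal{O}_{\mathbb{P}(T_{X/C})}(1)\) is the restriction of \(\mathcal{O}(1,1)\), and its pseudo-effectiveness follows from its fibrewise ampleness together with the bookkeeping of the Harder--Narasimhan slopes of \(\mathcal{E}\), which is exactly the computation carried out in \cite{Kim22}.

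Granting that \(T_X\) is always pseudo-effective, the final ``in particular'' is pure logic: by \cite{Kim22}, \(T_X\) is big if and only if \(C\cong\mathbb{P}^1\) or \(\mathcal{E}\) is not semi-stable, so \(T_X\) is pseudo-effective but not big precisely when \(C\not\cong\mathbb{P}^1\) and \(\mathcal{E}\) is semi-stable. The hard part will be the pseudo-effectiveness of \(T_{X/C}\) in arbitrary rank: the relative Euler sequence exhibits \(T_{X/C}\) as a \emph{quotient} of the positive bundle \(f^{*}\mathcal{E}^{\vee}\otimes\mathcal{O}_X(1)\), and \cref{lem:pe_inj} fails for quotients, so one cannot simply dominate it by a positive subbundle. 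Instead the negativity introduced when \(\mathcal{E}\) is unstable must be shown to be absorbed into the pseudo-effective cone of \(\mathbb{P}(T_{X/C})\)---precisely the point where the clean rank-two cone computation must give way to the slope analysis of Kim.
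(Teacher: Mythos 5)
Your reduction to \(T_{X/C}\) via the relative tangent sequence and \cref{lem:pe_inj} is exactly the paper's first step, and your rank-two argument is correct and self-contained: computing \(T_{X/C}\equiv 2C_0+eF\) for normalised \(\mathcal{E}\) and checking it lies in \(\PE(X)\) by the sign of \(e\) (effective for \(e\geq 0\), nef boundary class for \(e<0\)) is a genuinely different, cone-theoretic route that the paper does not take, and it fully settles the \(\mathbb{P}^1\)-bundle-over-a-curve case actually used later in \cref{prop:uniruled_blowup}. But the proposition is stated for projective bundles of arbitrary rank, and there your proposal has a genuine gap, which you yourself flag in your last paragraph: for the incidence-variety model of \(\mathbb{P}(T_{X/C})\) you assert that pseudo-effectiveness of the restriction of \(\mathcal{O}(1,1)\) ``follows from fibrewise ampleness together with the bookkeeping of the Harder--Narasimhan slopes'' as in \cite{Kim22}. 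Fibrewise ampleness never implies global pseudo-effectiveness, and deferring the remaining case to an unspecified computation in \cite{Kim22} (a bigness criterion) is not a proof. Note also that your worry about absorbing ``the negativity introduced when \(\mathcal{E}\) is unstable'' is misplaced: if \(\mathcal{E}\) is not semi-stable (or \(C\simeq\mathbb{P}^1\)), then \(T_X\) is already big by \cite{Kim22}, hence pseudo-effective, so only the semi-stable case requires an argument.

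The missing idea --- the whole content of the paper's proof --- is to aim for \emph{nefness} of \(T_{X/C}\) rather than pseudo-effectiveness, precisely because nefness, unlike pseudo-effectiveness (as you correctly observe), does pass to quotients. If \(\mathcal{E}\) is semi-stable, so is \(\mathcal{E}^{\vee}\otimes\mathcal{E}\); since \(\det(\mathcal{E}^{\vee}\otimes\mathcal{E})\simeq\mathcal{O}_C\), this is a semi-stable bundle of degree zero on a curve, hence nef by \cite[Proposition~6.4.11]{Laz04}. Tensoring the evaluation surjection \(f^{*}\mathcal{E}\to\mathcal{O}_X(1)\) with \(f^{*}\mathcal{E}^{\vee}\) and composing with the relative Euler quotient yields a surjection
\[
f^{*}(\mathcal{E}^{\vee}\otimes\mathcal{E})\longrightarrow f^{*}\mathcal{E}^{\vee}\otimes\mathcal{O}_X(1)\longrightarrow T_{X/C},
\]
so \(T_{X/C}\) is nef, hence pseudo-effective, and \cref{lem:pe_inj} concludes; the ``in particular'' is then the pure logic you state. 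This twist-by-\(\mathcal{E}^{\vee}\) device is uniform in the rank and entirely replaces the slope analysis you were hoping to import, so you should substitute it for both your incidence-variety sketch and (if you wish) your rank-two cone computation.
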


\begin{proof}
	In the view of \cite{Kim22},
    we only need to show that when \(\mathcal{E}\)
    is semi-stable,
    the tangent bundle \(T_X\) is pseudo-effective.

    Now \(\mathcal{E}\) is semi-stable,
    so are \(\mathcal{E}^{\vee}\) and hence \(\mathcal{E}^{\vee}\otimes\mathcal{E}\).
    Since the determinant \(\det(\mathcal{E}^{\vee}\otimes\mathcal{E})\simeq \mathcal{O}_C\),
    the semi-stable vector bundle \(\mathcal{E}^{\vee}\otimes\mathcal{E}\) is nef by \cite[Proposition~6.4.11]{Laz04}.
	We consider the following relative Euler sequence
    \[
		0\longrightarrow \mathcal{O}_X \longrightarrow f^{*}\mathcal{E}^{\vee}\otimes\mathcal{O}_X(1) \longrightarrow T_{X/C} \longrightarrow 0,
	\]
    where the relative tangent bundle \(T_{X/C}\coloneqq\Omega_{X/C}^{\vee}\).
    Then the following composite map   (cf.~\cite[Chapter~\Rmnum{2}, Proposition~7.11(2)]{Har77})
    \[
        f^{*}(\mathcal{E}^{\vee}\otimes\mathcal{E})\longrightarrow
        f^{*}\mathcal{E}^{\vee}\otimes\mathcal{O}_X(1)\longrightarrow T_{X/C}
    \]
   is a surjection, which implies that \(T_{X/C}\) is nef (cf.~\cite[Proposition~6.1.2]{Laz04})
    and hence pseudo-effective.
	So our proposition follows from \cref{lem:pe_inj}.
\end{proof}

Now we are in the position of proving \cref{prop:uniruled_blowup}.

\begin{proof}[Proof of \cref{prop:uniruled_blowup}]
	Let \(h\colon S_2\to S_1\) be the blow-up of \(S_1=S\) along \(p\).
	We may assume that \(\mathcal{E}\) is normalised (cf.~\cite[Chapter V, Notation 2.8.1]{Har77}).
	Since \(T_{S_1}\) is pseudo-effective but not big, by \cref{prop:ruled_surfaces}, \(\mathcal{E}\) is semi-stable.
	Together with \(\mathcal{E}\) being normalised, we have  \(\deg\det\mathcal{E}\geq 0\).
	Denote by \(C_0\) the tautological divisor on \(S_1=\mathbb{P}_C(\mathcal{E})\), which is not necessarily effective.
	Then \(\overline{\NE}(S_1)=\Nef(S_1)=\mathbb{R}_{+}(2C_0-f_1^{*}\det\mathcal{E})+\mathbb{R}_{+}F\),
	where \(F\) is a fibre of \(f_1=f\colon S_1=S\to C\)
	(cf.~\cite[Proposition~3.1]{Miy87}).

	Consider the following commutative diagram as in \cref{note:elementary_transform}.
	\[
		\xymatrix{
		\mathbb{P}(T_{S_1})\ar[d]^{\pi_1}
		& \mathbb{P}(h^{*}T_{S_1})\ar@{-->}[rr]\ar[d]^{\widetilde{\pi_1}}\ar[l]_{\widetilde{h}}
		& & \mathbb{P}(T_{S_2})\ar[d]^{\pi_2}\\
		S_1\ar[d]^{f_1}&S_2\ar[dl]^{f_2}\ar@{=}[rr]\ar[l]_{h}&&S_2\\
		C
		}
	\]
	Let \(D\) be the \(h\)-exceptional \((-1)\)-curve,
	\(f_2\coloneqq f_1\circ h\)  the induced composite map,
	\(\xi_i\) the tautological divisor of the projective bundle \(\mathbb{P}(T_{S_i})\),
	and \(Y_i\) the divisor in \(\mathbb{P}(T_{S_i})\) defined in \cref{note:fibration} (8).
	Then we have (cf.~\cref{note:fibration} (9))
	\[
		\xi_i\sim Y_i+\pi_i^{*}T_{S_i/C}.
	\]

	Let us calculate the difference
	\[
		h^{*}T_{S_1/C}-T_{S_2/C}=-h^{*}K_{S_1}+K_{S_2}+h^{*}E_{S_1}-E_{S_2}=D+h^{*}E_{S_1}-E_{S_2}=D,
	\]
	where \(E_{S_i}\) is defined in \cref{note:fibration} (5).
	Here, since \(f_1\) is smooth and the only singular fibre of \(f_2\) is reduced, we have \(h^{*}E_{S_1}=E_{S_2}=0\)
	and
	\[
		T_{S_1/C}=-K_{S_1}+f_1^{*}K_C\sim 2C_0-f_1^{*}\det\mathcal{E}.
	\]

	\begin{claim}\label{claim:pe_kappa}
		The divisor \(T_{S_2/C}=h^{*}T_{S_1/C}-D\) is pseudo-effective if and only if
		there exist some positive integer \(m\) and some line bundle
		\(\mathcal{L}\equiv T_{S_1/C}\)
		such that \(H^0(S_1,\mathfrak{m}_p^m\otimes\mathcal{L}^{\otimes m})\neq0\),
		where \(\mathfrak{m}_p\) is the maximal ideal of \(\mathcal{O}_{S_1,p}\).
	\end{claim}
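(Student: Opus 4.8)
The plan is to prove the two implications separately, reducing the pseudo-effectivity of $T_{S_2/C}=h^{*}T_{S_1/C}-D$ to an effectivity statement (up to numerical twist) via the projection formula. Throughout I would use the intersection numbers on $S_2$: writing $\bar F\coloneqq h^{*}F$, one has $T_{S_1/C}^2=(2C_0-f_1^{*}\det\mathcal{E})^2=0$, $T_{S_1/C}\cdot F=2$, $D^2=-1$ and $h^{*}(-)\cdot D=0$, together with the cone description already recorded above, namely $\overline{\NE}(S_1)=\Nef(S_1)=\mathbb{R}_{+}T_{S_1/C}+\mathbb{R}_{+}F$, whose two generators both have self-intersection $0$; in particular $S_1$ carries no negative curve.

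For the ``if'' direction, suppose $H^{0}(S_1,\mathfrak{m}_p^{m}\otimes\mathcal{L}^{\otimes m})\neq 0$ for some $m>0$ and some $\mathcal{L}\equiv T_{S_1/C}$. A nonzero section cuts out an effective divisor $\Delta\in|m\mathcal{L}|$ with $\operatorname{mult}_p\Delta\geq m$, so that $h^{*}\Delta=h^{-1}_{*}\Delta+(\operatorname{mult}_p\Delta)D$ and hence $h^{*}\Delta-mD$ is effective. As $\mathcal{L}\equiv T_{S_1/C}$, this effective divisor is numerically equivalent to $m(h^{*}T_{S_1/C}-D)=mT_{S_2/C}$, so $T_{S_2/C}$ is pseudo-effective. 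For the ``only if'' direction the key input is the projection formula $H^{0}(S_2,h^{*}(m\mathcal{L})-mD)\cong H^{0}(S_1,\mathfrak{m}_p^{m}\otimes\mathcal{L}^{\otimes m})$ (from $h_{*}\mathcal{O}_{S_2}(-mD)=\mathfrak{m}_p^{m}$ and $R^{i}h_{*}\mathcal{O}_{S_2}(-mD)=0$ for $i>0$). Letting $\mathcal{L}$ range over the numerical class of $T_{S_1/C}$ and using the divisibility of $\operatorname{Pic}^0(C)$, the sought section exists for some $m$ exactly when some positive multiple of the class $[T_{S_2/C}]$ is represented by an effective divisor; so it suffices to show that pseudo-effectivity of $[T_{S_2/C}]$ forces this.

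To do so I would compute $T_{S_2/C}^2=T_{S_1/C}^2-1=-1<0$ and invoke the divisorial Zariski decomposition $T_{S_2/C}=P+N$ with $P$ nef, $N\geq 0$ effective and $P\cdot N=0$. Negativity of $T_{S_2/C}^2$ forces $N\neq 0$, and $-1=P^2+N^2$ with $N^2<0$ gives $P^2\geq 0$. If $P^2>0$, then $P$ is big, so $mP$ is linearly equivalent to an effective divisor for $m\gg 0$ and $mT_{S_2/C}=mP+mN$ is effective, giving the section with $\mathcal{L}=T_{S_1/C}$.

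The remaining boundary case $P^2=0$ is the main obstacle, and I would resolve it by pushing the decomposition down to $S_1$. Since $P$ is nef and $\overline{\NE}(S_1)=\Nef(S_1)$, one checks $h_{*}P\cdot A=P\cdot h^{*}A\geq 0$ for all $A$ in this (self-dual) cone, so $h_{*}P=\alpha T_{S_1/C}+\beta F$ with $\alpha,\beta\geq 0$. Writing $P=h^{*}(h_{*}P)-kD$ with $k=P\cdot D\geq 0$ gives $0=P^2=(h_{*}P)^2-k^2=4\alpha\beta-k^2$, while effectivity of $h_{*}N$, whose class is $T_{S_1/C}-h_{*}P\equiv(1-\alpha)T_{S_1/C}-\beta F$, forces $\beta=0$ inside the cone, hence $k=0$ and $\alpha\leq 1$. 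Moreover $N$ cannot be supported on $D$ alone, since $P\cdot D=(T_{S_2/C}-aD)\cdot D=1+a>0$ rules out the Zariski orthogonality $P\cdot D=0$; thus $h_{*}N\neq 0$ and $\alpha<1$. Consequently $(1-\alpha)T_{S_1/C}$ is represented by the nonzero effective divisor $h_{*}N$, so $T_{S_1/C}$ is $\mathbb{Q}$-effective up to numerical equivalence, and therefore so is $T_{S_2/C}=\alpha h^{*}T_{S_1/C}+N$. Feeding this back through the projection formula produces the required section with a suitable $\mathcal{L}\equiv T_{S_1/C}$, completing the claim.
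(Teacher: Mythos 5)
Your proof is correct, and in the nontrivial direction it takes a genuinely different route from the paper's. The paper also starts from the (divisorial) Zariski decomposition of \(h^{*}T_{S_1/C}-D\) on \(S_2\), but it works with the \emph{negative} part: since \((h^{*}T_{S_1/C}-\mu D)^2<0\) for every \(\mu>0\), the negative part must contain a non-\(h\)-exceptional irreducible curve \(P_1\) with \(P_1^2<0\); pushing forward and using that \(T_{S_1/C}\) spans an extremal ray of \(\overline{\NE}(S_1)=\Nef(S_1)\) gives \(T_{S_1/C}\equiv t\,h_{*}P_1\) with \(t>0\), and the contrapositive of the section condition is then encoded in the multiplicity bound \(h^{*}(h_{*}P_1)=P_1+sD\) with \(st<1\), whence \(h^{*}T_{S_1/C}-D\equiv tP_1-(1-st)D\) is visibly not pseudo-effective, a contradiction. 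You instead analyse the \emph{nef} part \(P\) via the dichotomy \(P^2>0\) (then \(P\) is big and an effective multiple is immediate) versus \(P^2=0\) (then your cone computation forces \(\beta=0\), \(k=P\cdot D=0\), so \(P\equiv\alpha h^{*}T_{S_1/C}\), and \(h_{*}N\neq0\) is an effective representative of \((1-\alpha)T_{S_1/C}\)), concluding that some positive multiple of the class \([T_{S_2/C}]\) is represented by an effective divisor; you then translate between such effective representatives and sections of \(\mathfrak{m}_p^m\otimes\mathcal{L}^{\otimes m}\) via \(h_{*}\mathcal{O}_{S_2}(-mD)=\mathfrak{m}_p^m\) together with the divisibility of \(\operatorname{Pic}^0(C)\) (using \(g(C)\geq1\)) --- bookkeeping that the paper leaves implicit, although its own argument also needs it to manufacture \(\mathcal{L}\) with \(\mathcal{L}^{\otimes m}\cong\mathcal{O}_{S_1}(mt\,h_{*}P_1)\) when \(st\geq1\). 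Your route is somewhat longer but buys the slightly stronger intermediate statement that pseudo-effectivity of \(T_{S_2/C}\) already forces \(\mathbb{Q}\)-effectivity of its numerical class, whereas the paper's single-curve contradiction is more economical. Two small points you should make explicit, though both are immediate: the classical Zariski--Fujita decomposition of the integral class \(T_{S_2/C}\) has \emph{rational} \(P\) and \(N\), so that ``\(mP\) effective'' and the coefficient \(\alpha\) make sense (indeed \(1-\alpha=(h_{*}N\cdot F)/2\in\mathbb{Q}\)); and the coefficients in \(h_{*}N\equiv(1-\alpha)T_{S_1/C}-\beta F\) are well defined because \(\rho(S_1)=2\) and the classes \(T_{S_1/C}\), \(F\) are linearly independent.
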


	\begin{proof}[Proof of \cref{claim:pe_kappa}]
		One direction is clear.
		Let us assume that \(h^{*}T_{S_1/C}-D\) is pseudo-effective.
		Then we have the Zariski decomposition \(h^{*}T_{S_1/C}-D\equiv \sum_i a_iP_i+N\),
		where \(N\) is nef and \(\sum_i a_iP_i\) has negative definite intersection matrix.
		Since \((h^{*}T_{S_1/C}-\mu D)^2<0\) for any \(\mu>0\), there exists at least one, say \(P_1\),
		which is not \(h\)-exceptional such that \(P_1^2<0\).
		Pushing this forward to \(S_1\),
		we obtain \(T_{S_1/C}\equiv \sum_i a_ih_{*}P_i+h_{*}N\).
		Since \(T_{S_1/C}\) is extremal,
		\(T_{S_1/C}\equiv th_{*}P_1\) for some \(t>0\).
		Suppose to the contrary that
		for any positive integer \(m\)
		and any line bundle
		\(\mathcal{L}\equiv T_{S_1/C}\),
		one has \(h^0(S_1,\mathfrak{m}_p^m\otimes\mathcal{L}^{\otimes m})=0\).
		By our assumption, \(h^{*}(h_{*}P_1)=P_1+sD\) with \(st<1\);
		cf.~\cite[Chapter~V, Proposition~3.6]{Har77}.
		Hence, \(h^{*}T_{S_1/C}-D\equiv tP_1-(1-st)D\) which is not pseudo-effective
		by noting that \(P_1\) is extremal and not parallel to \(D\).
		This leads to a contradiction to our assumption.
	\end{proof}

	We come back to the proof of \cref{prop:uniruled_blowup}.
	If \(H^0(S_1,\mathfrak{m}_p^m\otimes\mathcal{L}^{\otimes m})\neq 0\)
	for some positive integer \(m\) and some line bundle \(\mathcal{L}\equiv T_{S_1/C}\),
	then \(T_{S_2/C}=h^{*}T_{S_1/C}-D\) is pseudo-effective
	and thus \(T_{S_2}\) is pseudo-effective, noting that \(\xi_2\sim Y_2+\pi_2^*T_{S_2/C}\) (cf.~\cref{lem:pe_inj}).

	Now we assume that
	\(H^0(S_1,\mathfrak{m}_p^m\otimes\mathcal{L}^{\otimes m})=0\)
	for any positive integer \(m\) and any line bundle \(\mathcal{L}\equiv T_{S_1/C}\).
	Suppose that \(\xi_2\) is pseudo-effective.
	It follows from the divisorial Zariski decomposition (cf.~\cite[Theorem~3.12]{Bou04}) that
	\[
		Y_2+\pi_2^{*}T_{S_2/C}\sim \xi_2\equiv \sum a_iP_i+N,
	\]
	where \(a_i>0\), \(P_i\) are prime divisors and \(N\) is a modified nef \(\mathbb{R}\)-divisor.
	By \cref{note:fibration} (9), \(\xi_2|_{Y_2}=-\Exc(\pi_2|_{Y_2})-(\pi_2|_{Y_2})^{*}f_2^{*}K_C\),
	which is anti-pseudo-effective and non-zero.
	Without loss of generality,
	we may assume that \(P_1=Y_2\) with \(a_1>0\).
	Note that now \(\pi_2^{*}T_{S_2/C}\sim\xi_2-Y_2\)
	is not pseudo-effective by \cref{claim:pe_kappa} (cf.~\cite[Chapter~\Rmnum{2}, Lemma~5.6]{Nak04}).
	Consequently, we deduce that \(0<a_1<1\).
	Consider the following restriction
	\begin{align*}
		(\xi_2-a_1Y_2)|_{Y_2} & = ((1-a_1)\xi_2+a_1\pi_2^{*}T_{S_2/C})|_{Y_2}                                              \\
		                      & = (1-a_1)(-\Exc(\pi_2|_{Y_2})-(\pi_2|_{Y_2})^{*}f_2^{*}K_C)+a_1(\pi_2|_{Y_2})^{*}T_{S_2/C}
	\end{align*}
	which is a pseudo-effective divisor on \(Y_2\).
	Pushing this forward to \(S_2\) along the birational morphism \(\pi_2|_{Y_2}\),
	we have the pseudo-effectiveness of
	\[
		(\pi_2|_{Y_2})_{*}((\xi_2-a_1Y_2)|_{Y_2}) = -(1-a_1)f_2^{*}K_C + a_1T_{S_2/C},
	\]
	while the RHS is never pseudo-effective whenever \(0<a_1<1\) and \(g(C)\geq1\).
	In particular, our assumption is absurd and \(\xi_2\) is thus not pseudo-effective.
	So we finish the proof of our proposition.
\end{proof}

\bibliographystyle{alpha}
\bibliography{ref}

\newcommand{\etalchar}[1]{$^{#1}$}
\begin{thebibliography}{MnOSC{\etalchar{+}}15}

\bibitem[BCHM10]{BCHM10}
Caucher Birkar, Paolo Cascini, Christopher~D. Hacon, and James McKernan.
\newblock Existence of minimal models for varieties of log general type.
\newblock {\em J. Amer. Math. Soc.}, 23(2):405--468, 2010.

\bibitem[BDPP13]{BDPP13}
S\'ebastien Boucksom, Jean-Pierre Demailly, Mihai P\u{a}un, and Thomas Peternell.
\newblock The pseudo-effective cone of a compact {K}\"ahler manifold and varieties of negative {K}odaira dimension.
\newblock {\em J. Algebraic Geom.}, 22(2):201--248, 2013.

\bibitem[BHPVdV04]{BHPV04}
Wolf~P. Barth, Klaus Hulek, Chris A.~M. Peters, and Antonius Van~de Ven.
\newblock {\em Compact complex surfaces}, volume~4 of {\em Ergebnisse der Mathematik und ihrer Grenzgebiete. 3. Folge. A Series of Modern Surveys in Mathematics}.
\newblock Springer-Verlag, Berlin, second edition, 2004.

\bibitem[Bog79]{Bog79}
Fedor~A Bogomolov.
\newblock Holomorphic tensors and vector bundles on projective varieties.
\newblock {\em Math. USSR Izv.}, 13(3):499, 1979.

\bibitem[Bou04]{Bou04}
S\'ebastien Boucksom.
\newblock Divisorial {Z}ariski decompositions on compact complex manifolds.
\newblock {\em Ann. Sci. \'Ec. Norm. Sup\'er. (4)}, Ser. 4, 37(1):45--76, 2004.

\bibitem[CP91]{CP91}
Fr\'ed\'eric Campana and Thomas Peternell.
\newblock Projective manifolds whose tangent bundles are numerically effective.
\newblock {\em Math. Ann.}, 289(1):169--187, 1991.

\bibitem[DPS94]{DPS94}
Jean-Pierre Demailly, Thomas Peternell, and Michael Schneider.
\newblock {Compact complex manifolds with numerically effective tangent bundles}.
\newblock {\em J. Algebraic Geom.}, 3(2):295--345, 1994.

\bibitem[Dru18]{Dru18}
St\'{e}phane Druel.
\newblock A decomposition theorem for singular spaces with trivial canonical class of dimension at most five.
\newblock {\em Invent. Math.}, 211(1):245--296, 2018.

\bibitem[Eno87]{Eno87}
Ichiro Enoki.
\newblock Stability and negativity for tangent sheaves of minimal {K}\"ahler spaces.
\newblock In {\em Geometry and Analysis on Manifolds: Proceedings of the 21st International Taniguchi Symposium held at Katata, Japan, Aug. 23--29 and the Conference held at Kyoto, Aug. 31--Sept. 2, 1987}, pages 118--126. Springer, 1987.

\bibitem[FL22]{FL22}
Baohua Fu and Jie Liu.
\newblock Normalized tangent bundle, varieties with small codegree and pseudoeffective threshold.
\newblock {\em J. Inst. Math. Jussieu (to appear)}, 2022.

\bibitem[Ful98]{Ful98}
William Fulton.
\newblock {\em Intersection theory}, volume~2 of {\em Ergebnisse der Mathematik und ihrer Grenzgebiete. 3. Folge. A Series of Modern Surveys in Mathematics}.
\newblock Springer-Verlag, Berlin, second edition, 1998.

\bibitem[GMM21]{GMM21}
Rajendra~V Gurjar, Kayo Masuda, and Masayoshi Miyanishi.
\newblock {\em Affine space fibrations}, volume~79 of {\em De Gruyter Studies in Mathematics}.
\newblock Berlin, Boston: De Gruyter, 2021.

\bibitem[Har77]{Har77}
Robin Hartshorne.
\newblock {\em Algebraic geometry}, volume~52 of {\em Graduate Texts in Mathematics}.
\newblock Springer-Verlag, New York-Heidelberg, 1977.

\bibitem[Har10]{Har10}
Robin Hartshorne.
\newblock {\em Deformation theory}, volume 257 of {\em Graduate Texts in Mathematics}.
\newblock Springer New York, 2010.

\bibitem[HIM22]{HIM22}
Genki Hosono, Masataka Iwai, and Shin-ichi Matsumura.
\newblock On projective manifolds with pseudo-effective tangent bundle.
\newblock {\em J. Inst. Math. Jussieu}, 21(5):1801--1830, 2022.

\bibitem[HL22]{HL21}
Andreas H\"oring and Jie Liu.
\newblock Fano manifolds with big tangent bundle: a characterisation of \({V}_5\).
\newblock {\em Collect. Math. (to appear)}, 2022.

\bibitem[HLS22]{HLS22}
Andreas H\"oring, Jie Liu, and Feng Shao.
\newblock Examples of {F}ano manifolds with non-pseudoeffective tangent bundle.
\newblock {\em J. Lond. Math. Soc. (2)}, 106:27--59, 2022.

\bibitem[HP19]{HP19}
Andreas H{\"o}ring and Thomas Peternell.
\newblock Algebraic integrability of foliations with numerically trivial canonical bundle.
\newblock {\em Invent. Math.}, 216(2):395--419, 2019.

\bibitem[HP20]{HP20}
Andreas H\"oring and Thomas Peternell.
\newblock A nonvanishing conjecture for cotangent bundles.
\newblock {\em Ann. Fac. Sci. Toulouse Math. (6) (to appear)}, 2020.

\bibitem[HP21]{HP21}
Andreas H{\"o}ring and Thomas Peternell.
\newblock Stein complements in compact {K}\"ahler manifolds.
\newblock Preprint \urlstyle{rm}\url{https://arxiv.org/pdf/2111.03303.pdf}, 2021.

\bibitem[Hsi15]{Hsi15}
Jen-Chieh Hsiao.
\newblock A remark on bigness of the tangent bundle of a smooth projective variety and \({D}\)-simplicity of its section rings.
\newblock {\em J. Algebra Appl.}, 14(7):10, 2015.

\bibitem[Kan17]{Kan17}
Akihiro Kanemitsu.
\newblock Fano 5-folds with nef tangent bundles.
\newblock {\em Math. Res. Lett.}, 24(5):1453--1475, 2017.

\bibitem[Kim22]{Kim22}
Jeong-Seop Kim.
\newblock Bigness of the tangent bundles of projective bundles over curves.
\newblock {\em Comptes Rendus Mathematique (to appear)}, 2022.

\bibitem[KKL22]{KKL22}
Hosung Kim, Jeong-Seop Kim, and Yongnam Lee.
\newblock Bigness of the tangent bundle of a {F}ano threefold with picard number two.
\newblock Preprint \urlstyle{rm}\url{https://arxiv.org/pdf/2201.06351.pdf}, 2022.

\bibitem[KM98]{KM98}
J{\'a}nos Koll{\'a}r and Shigefumi Mori.
\newblock {\em Birational geometry of algebraic varieties}, volume 134 of {\em Cambridge Tracts in Mathematics}.
\newblock Cambridge University Press, Cambridge, 1998.
\newblock With the collaboration of C. H. Clemens and A. Corti, Translated from the 1998 Japanese original.

\bibitem[KS00]{KS00}
Maximilian Kreuzer and Harald Skarke.
\newblock Complete classification of reflexive polyhedra in four dimensions.
\newblock {\em Adv. Theor. Math. Phys.}, 4(6):1209--1230, 2000.

\bibitem[Laz04]{Laz04}
Robert~K Lazarsfeld.
\newblock {\em Positivity in algebraic geometry I: Positivity for Vector Bundles, and Multiplier Ideals}, volume~48 of {\em Ergebnisse der Mathematik und ihrer Grenzgebiete. 3. Folge}.
\newblock Springer Berlin, Heidelberg, 2004.

\bibitem[Lie78]{Lie78}
David~I. Lieberman.
\newblock Compactness of the {C}how scheme: applications to automorphisms and deformations of {K}\"ahler manifolds.
\newblock In {\em Fonctions de plusieurs variables complexes, III (S\'em. Fran\c{c}ois Norguet, 1975--1977)}, volume 670 of {\em Lecture Notes in Math.}, pages 140--186. Springer, Berlin, 1978.

\bibitem[Liu22]{Liu22}
Jie Liu.
\newblock On moment map and bigness of tangent bundles of \({G}\)-varieties.
\newblock {\em Algebra Number Theory (to appear)}, 2022.

\bibitem[Mal21]{Mal21}
Devlin Mallory.
\newblock Bigness of the tangent bundle of del {P}ezzo surfaces and \({D}\)-simplicity.
\newblock {\em Algebra Number Theory}, 15(8):2019--2036, 2021.

\bibitem[Mar82]{Mar82}
Masaki Maruyama.
\newblock Elementary transformations in the theory of algebraic vector bundles.
\newblock In {\em Algebraic geometry (La R\'abida, 1981)}, volume 961 of {\em Lecture Notes in Math.}, pages 241--266. Springer, Berlin, 1982.

\bibitem[Mat22]{Mat22}
Shin-ichi Matsumura.
\newblock On the minimal model program for projective varieties with pseudo-effective tangent sheaf.
\newblock Preprint \urlstyle{rm}\url{https://arxiv.org/abs/2211.09109}, 2022.

\bibitem[Mir89]{Mir89}
Rick Miranda.
\newblock {\em The basic theory of elliptic surfaces}.
\newblock Dottorato di Ricerca in Matematica. ETS Editrice, Pisa, 1989.

\bibitem[Miy87a]{Miy87}
Yoichi Miyaoka.
\newblock The {C}hern classes and {K}odaira dimension of a minimal variety.
\newblock In {\em Algebraic geometry, Sendai, 1985}, volume~10 of {\em Adv. Stud. Pure Math.}, pages 449--476. North-Holland, Amsterdam, 1987.

\bibitem[Miy87b]{Miy87-b}
Yoichi Miyaoka.
\newblock Deformations of a morphism along a foliation and applications.
\newblock In {\em Algebraic geometry, Bowdoin, 1985 (Brunswick, Maine, 1985)}, volume~46 of {\em Proc. Sympos. Pure Math.}, pages 245--268. Amer. Math. Soc., Providence, RI, 1987.

\bibitem[MnOSC{\etalchar{+}}15]{MOS15}
Roberto Mu\~noz, Gianluca Occhetta, Luis~E. Sol\'a~Conde, Kiwamu Watanabe, and Jaros\l aw~A. Wi\'sniewski.
\newblock A survey on the {C}ampana-{P}eternell conjecture.
\newblock {\em Rend. Istit. Mat. Univ. Trieste}, 47:127--185, 2015.

\bibitem[Mor79]{Mor79}
Shigefumi Mori.
\newblock Projective manifolds with ample tangent bundles.
\newblock {\em Ann. of Math. (2)}, 110(3):593--606, 1979.

\bibitem[Nak04]{Nak04}
Noboru Nakayama.
\newblock {\em Zariski-decomposition and abundance}, volume~14 of {\em MSJ Memoirs}.
\newblock Mathematical Society of Japan, Tokyo, 2004.

\bibitem[NZ09]{NZ09}
Noboru Nakayama and De-Qi Zhang.
\newblock Building blocks of \'{e}tale endomorphisms of complex projective manifolds.
\newblock {\em Proc. Lond. Math. Soc. (3)}, 99(3):725--756, 2009.

\bibitem[PS20]{PS20}
Yu.~G. Prokhorov and C.~A. Shramov.
\newblock Bounded automorphism groups of compact complex surfaces.
\newblock {\em Sb. Math.}, 211(9):1310, sep 2020.

\bibitem[Ser96]{Ser96}
Fernando Serrano.
\newblock Isotrivial fibred surfaces.
\newblock {\em Ann. Mat. Pura Appl. (4)}, 171:63--81, 1996.

\bibitem[Sha20]{Sha20}
Feng Shao.
\newblock On pseudoeffective thresholds and cohomology of twisted symmetric tensor fields on irreducible hermitian symmetric spaces.
\newblock Preprint \urlstyle{rm}\url{https://arxiv.org/abs/2012.11315}, 2020.

\bibitem[Uen75]{Uen75}
Kenji Ueno.
\newblock {\em Classification theory of algebraic varieties and compact complex spaces}.
\newblock Lecture Notes in Mathematics, Vol. 439. Springer-Verlag, Berlin-New York, 1975.
\newblock Notes written in collaboration with P. Cherenack.

\end{thebibliography}

\end{document}